\newtheorem{theorem}{Theorem}[section]
\newtheorem{cor}[theorem]{Corollary}
\newtheorem{lem}[theorem]{Lemma}
\newtheorem{prop}[theorem]{Proposition}
\newtheorem{thm}[theorem]{Theorem}
\newtheorem{rem}[theorem]{Remark}
\newtheorem{defn}[theorem]{Definition}
\newcommand{\nc}{\newcommand}
\nc\ol{\overline} \nc\ul{\underline} \nc\wt{\widetilde}
\nc{\z}{\zeta}
\nc{\ZZ}{{\mathbb Z}} \nc{\NN}{{\mathbb N}} \nc{\CC}{{\mathbb C}}
\nc{\QQ}{{\mathbb Q}} \nc{\CP}{{\mathbb {CP}}} \nc{\A}{{\mathcal A}}
\nc\U{{\mathfrak U}} \nc{\F}{{\mathcal F}} \nc{\N}{{\mathcal N}}
\nc{\Aa}{{\mathcal A}} \nc{\E}{{\mathcal E}} \nc{\sS}{{\mathbb S}}
\nc{\K}{{\mathcal K}} \nc{\Ll}{{\mathcal L}} \nc{\Y}{{\mathcal Y}}
\nc{\SSS}{{\mathcal S}}
\newcommand\C{\mathfrak c}
\newcommand{\gl}{\mathfrak{gl}}
\newcommand{\ssl}{\mathfrak{sl}}
\newcommand{\Sym}{\mathrm{Sym}}
\newcommand{\drj}{\mathrm{DJ}}
\nc{\iso}{{\stackrel{\sim}{\longrightarrow}}}
\begin{document}

\author[Alexander Tsymbaliuk]{Alexander Tsymbaliuk}
\address{A.~Tsymbaliuk: Yale University, Department of Mathematics, New Haven, CT 06511, USA}
\email{oleksandr.tsymbaliuk@yale.edu}

\title[Several realizations of Fock modules for toroidal $\ddot{U}_{q,d}(\ssl_n)$]
{Several realizations of Fock modules for toroidal $\ddot{U}_{q,d}(\ssl_n)$}

\begin{abstract}
In this paper, we relate the well-known
\emph{Fock representations} of $\ddot{U}_{q,d}(\ssl_n)$ to the
vertex, shuffle, and `$L$-operator' representations of $\ddot{U}_{q,d}(\ssl_n)$.
These identifications generalize those for the quantum toroidal algebra of $\gl_1$,
which were recently established in~\cite{FJMM2}.
\end{abstract}

\maketitle

       %%%%%%%%%%%%%%%%%%%%%%%%%%%%%%%%%%%%%%%%%%%%%%%%%%%%%%%%%%%%%%%%%%%%%%%%%%%%%%%%%%%%%%%%%%%%%%%%%%%%%%%%%%%%%%%%%%%%%%%%%%%
       %%%%%%%%%%%%%%%%%%%%%%%%%%%%%%%%%%%%%%%%%%%%% INTRODUCTION %%%%%%%%%%%%%%%%%%%%%%%%%%%%%%%%%%%%%%%%%%%%%%%%%%%%%%%%%%%%%%%%
       %%%%%%%%%%%%%%%%%%%%%%%%%%%%%%%%%%%%%%%%%%%%%%%%%%%%%%%%%%%%%%%%%%%%%%%%%%%%%%%%%%%%%%%%%%%%%%%%%%%%%%%%%%%%%%%%%%%%%%%%%%%

\section*{Introduction}

 In the recent paper~\cite{FJMM2}, authors proposed a shuffle
approach to the Bethe ansatz problem for certain modules over
the quantum toroidal algebra of $\gl_1$, viewing the latter as the
Drinfeld double of the \emph{small shuffle} algebra.
The general idea behind a shuffle approach is that it frequently allows to interpret
complicated concepts in simple terms.
As the representation theory of quantum toroidal algebras of $\ssl_n$ is quite
similar to that of quantum toroidal algebras of $\gl_1$
(though technically it is more involved), it is desirable
to generalize the aforementioned construction for the former case.

 In this article, we identify different families of representations of
quantum toroidal algebras of $\ssl_n$.
This will be crucial for our arguments in~\cite{FT2}, where we diagonalize the commutative
subalgebras of the quantum toroidal algebras of $\ssl_n$ studied in~\cite{FT1}.

 This paper is organized as follows:

$\bullet$
 In Section 1, we recall the definition and key results about the
quantum toroidal algebra $\ddot{U}_{q,d}(\ssl_n), n\geq 3$.
In particular, we recall the relation to the shuffle algebra $S$ (of $A^{(1)}_{n-1}$-type) studied in~\cite{N, FT1}.

 We also discuss three different constructions of their representations:

-combinatorial representations $\tau^p_{u,\bar{c}}$ introduced in~\cite{FJMM1},

-vertex representations $\rho^p_{u,\bar{c}}$ constructed in~\cite{S},

-shuffle representations $\pi^p_{u,\bar{c}}$ introduced in this paper.

\noindent
 Our construction of $\pi^p_{u,\bar{c}}$ is similar to that of~\cite{FJMM2} for the quantum toroidal algebra of $\gl_1$.
In particular, the underlying vector space $S_{1,p}(u)$ carries a natural $S$-bimodule structure, while $\pi^p_{u,\bar{c}}$
is the extension of the left $S$-action to the action of $\ddot{U}_{q,d}(\ssl_n)$, see Proposition~\ref{extension to toroidal action}.

$\bullet$
 In Section 2, we relate the aforementioned three different families of representations:

-In Theorem~\ref{main1}, we show that $\pi^p_{u,\bar{c}}$ induces an action of $\ddot{U}_{q,d}(\ssl_n)$ on the factor of $S_{1,p}(u)$ by the right
$S'$-action (here $S'\subset S$ denotes the augmentation ideal), which is isomorphic to the $\tau^p_{u,\bar{c}}$-action.
In Theorems~\ref{main2},~\ref{main3}, we generalize this result to some other families of representations.

-In Theorem~\ref{main4}, we show that Miki's isomorphism $\varpi$ of the quantum
toroidal algebras intertwines the dual of the combinatorial representation $\tau^p_{u,\bar{c}}$
and the corresponding vertex representation $\rho^p_{u',\bar{c}'}$ for appropriate parameters.

$\bullet$
 In Section 3, we study the matrix elements of $L$ operators associated to the vertex representations $\rho^p_{u,\bar{c}}$.
In Theorem~\ref{main5}, we derive an explicit formula for the matrix element $L^{p,\bar{c}}_{\emptyset,\emptyset}$,
whose shuffle realization was obtained in~\cite{FT1}.
This allows us to identify the shuffle $S$-bimodule $S_{1,p}(u)$ with
the $S$-bimodule generated by $L^{p,\bar{c}}_{\emptyset,\emptyset}$, see Proposition~\ref{L-opertaor realization}.

%%%%%%%%%%%%%%%%%%%%%%%%%%%%%%%%%%%%%%%%%%%%%%%% Acknowledgments %%%%%%%%%%%%%%%%%%%%%%%%%%%%%%%%%%%%%%%%%%%%%%%%%%%%%%%%%%%

\subsection*{Acknowledgments}
$\ $

 I would like to thank P.~Etingof, B.~Feigin, M.~Finkelberg, and A.~Negut for many stimulating discussions over the years.
I am indebted to the anonymous referee for insightful comments on the first version of the paper, which led to a better exposition of the material.

I would like to thank the Max Planck Institute for Mathematics in Bonn
for the hospitality and support in June 2015, where part of this project was carried out.
The author also gratefully acknowledges support from the Simons Center for Geometry and Physics, Stony Brook University,
at which most of the research for this paper was performed, as well as Yale University, where the final version
of this paper was completed.

This work was partially supported by the NSF Grants DMS--1502497, DMS--1821185.

       %%%%%%%%%%%%%%%%%%%%%%%%%%%%%%%%%%%%%%%%%%%%%%%%%%%%%%%%%%%%%%%%%%%%%%%%%%%%%%%%%%%%%%%%%%%%%%%%%%%%%%%%%%%%%%%%%%%%%%%%%%%
       %%%%%%%%%%%%%%%%%%%%%%%%%%%%%%%%%%%%%%%%%%%%% SECTION 1 %%%%%%%%%%%%%%%%%%%%%%%%%%%%%%%%%%%%%%%%%%%%%%%%%%%%%%%%%%%%%%%%%%%
       %%%%%%%%%%%%%%%%%%%%%%%%%%%%%%%%%%%%%%%%%%%%%%%%%%%%%%%%%%%%%%%%%%%%%%%%%%%%%%%%%%%%%%%%%%%%%%%%%%%%%%%%%%%%%%%%%%%%%%%%%%%

\section{Basic definitions and constructions}

                                 %%%%%%%%%%%%%% Quantum Toroidal Algebra %%%%%%%%%%%%%%

\subsection{Quantum toroidal algebras of $\ssl_n$ for $n\geq 3$}
$\ $

 Let $q,d\in \CC^\times$ be two parameters. We set $[n]:=\{0,1,\ldots,n-1\},\ [n]^\times:=[n]\backslash \{0\}$,
the former viewed as a set of mod $n$ residues.
Let $g_m(z):=\frac{q^mz-1}{z-q^m}$. Define $\{a_{i,j},m_{i,j}|i,j\in [n]\}$ by
  $$a_{i,i}=2,\ a_{i,i\pm 1}=-1,\ m_{i,i\pm1}=\mp 1,\ \mathrm{and}\ a_{i,j}=m_{i,j}=0\ \mathrm{otherwise}.$$
The quantum toroidal algebra of $\ssl_n$, denoted by $\ddot{U}_{q,d}(\ssl_n)$,
is the unital associative $\CC$-algebra generated by
$\{e_{i,k}, f_{i,k}, \psi_{i,k}, \psi_{i,0}^{-1}, \gamma^{\pm 1/2}, q^{\pm d_1}, q^{\pm d_2}\}_{i\in [n]}^{k\in \ZZ}$
with the following defining relations:
\begin{equation}\tag{T0.1} \label{T0.1}
  [\psi_i^\pm(z),\psi_j^\pm(w)]=0,\ \gamma^{\pm 1/2}-\mathrm{central},
\end{equation}
\begin{equation}\tag{T0.2} \label{T0.2}
  \psi_{i,0}^{\pm 1}\cdot \psi_{i,0}^{\mp 1}=\gamma^{\pm 1/2}\cdot \gamma^{\mp 1/2}=q^{\pm d_1}\cdot q^{\mp d_1}= q^{\pm d_2}\cdot q^{\mp d_2}=1,
\end{equation}
\begin{equation}\tag{T0.3}\label{T0.3}
  q^{d_1}e_i(z)q^{-d_1}=e_i(qz),\ q^{d_1}f_i(z)q^{-d_1}=f_i(qz),\ q^{d_1}\psi^\pm_i(z)q^{-d_1}=\psi^\pm_i(qz),
\end{equation}
\begin{equation}\tag{T0.4}\label{T0.4}
  q^{d_2}e_i(z)q^{-d_2}=qe_i(z),\ q^{d_2}f_i(z)q^{-d_2}=q^{-1}f_i(z),\ q^{d_2}\psi^\pm_i(z)q^{-d_2}=\psi^\pm_i(z),
\end{equation}
\begin{equation}\tag{T1} \label{T1}
  g_{a_{i,j}}(\gamma^{-1}d^{m_{i,j}}z/w)\psi_i^+(z)\psi_j^-(w)=g_{a_{i,j}}(\gamma d^{m_{i,j}}z/w)\psi_j^-(w)\psi_i^+(z),
\end{equation}
\begin{equation}\tag{T2} \label{T2}
  e_i(z)e_j(w)=g_{a_{i,j}}(d^{m_{i,j}}z/w)e_j(w)e_i(z),
\end{equation}
\begin{equation}\tag{T3}\label{T3}
  f_i(z)f_j(w)=g_{a_{i,j}}(d^{m_{i,j}}z/w)^{-1}f_j(w)f_i(z),
\end{equation}
\begin{equation}\tag{T4}\label{T4}
  (q-q^{-1})[e_i(z),f_j(w)]=\delta_{i,j}\left(\delta(\gamma w/z)\psi_i^+(\gamma^{1/2}w)-\delta(\gamma z/w)\psi_i^-(\gamma^{1/2}z)\right),
\end{equation}
\begin{equation}\tag{T5}\label{T5}
  \psi_i^\pm(z)e_j(w)=g_{a_{i,j}}(\gamma^{\pm 1/2}d^{m_{i,j}}z/w)e_j(w)\psi_i^\pm(z),
\end{equation}
\begin{equation}\tag{T6}\label{T6}
  \psi_i^{\pm}(z)f_j(w)=g_{a_{i,j}}(\gamma^{\mp 1/2}d^{m_{i,j}}z/w)^{-1}f_j(w)\psi_i^\pm(z),
\end{equation}
\begin{equation}\tag{T7.1}\label{T7.1}
  \mathrm{Sym}_{z_1,z_2}\ [e_i(z_1),[e_i(z_2),e_{i\pm 1}(w)]_q]_{q^{-1}}=0,\
  [e_i(z), e_j(w)]=0\ \mathrm{for}\ j\ne i, i\pm 1,
\end{equation}
\begin{equation}\tag{T7.2}\label{T7.2}
  \mathrm{Sym}_{z_1,z_2}\ [f_i(z_1),[f_i(z_2),f_{i\pm 1}(w)]_q]_{q^{-1}}=0,\
  [f_i(z), f_j(w)]=0\ \mathrm{for}\ j\ne i, i\pm 1,
\end{equation}
where we set $[a,b]_x:=ab-x\cdot ba$ and define the generating series as follows:
  $$e_i(z):=\sum_{k=-\infty}^{\infty}{e_{i,k}z^{-k}},\  \
    f_i(z):=\sum_{k=-\infty}^{\infty}{f_{i,k}z^{-k}},\  \
    \psi_i^{\pm}(z):=\psi_{i,0}^{\pm 1}+\sum_{ r>0}{\psi_{i,\pm r}z^{\mp r}},\ \
    \delta(z):=\sum_{k=-\infty}^{\infty}{z^k}.$$

 It will be convenient to use the generators $\{h_{i,k}\}_{k\ne 0}$ instead of  $\{\psi_{i,k}\}_{k\ne 0}$, defined by
  $$\exp\left(\pm(q-q^{-1})\sum_{r>0}h_{i,\pm r}z^{\mp r}\right)=\bar{\psi}_i^\pm(z):=\psi_{i,0}^{\mp 1}\psi^\pm_i(z),\ \
     h_{i,\pm r}\in \CC[\psi_{i,0}^{\mp 1},\psi_{i,\pm 1},\psi_{i,\pm2}, \ldots].$$
Then the relations (T5,T6) are equivalent to the following (we use  $[m]_q:=(q^m-q^{-m})/(q-q^{-1})$):
\begin{equation}\tag{T5$'$}\label{T5'}
  \psi_{i,0}e_{j,l}=q^{a_{i,j}}e_{j,l}\psi_{i,0},\
  [h_{i, k}, e_{j,l}]=d^{-km_{i,j}}\gamma^{-|k|/2}\frac{[ka_{i,j}]_q}{k}e_{j,l+k}\ \mathrm{for}\ k\ne 0,
\end{equation}
\begin{equation}\tag{T6$'$}\label{T6'}
  \psi_{i,0}f_{j,l}=q^{-a_{i,j}}f_{j,l}\psi_{i,0},\
  [h_{i, k},  f_{j,l}]=-d^{-km_{i,j}}\gamma^{|k|/2}\frac{[ka_{i,j}]_q}{k}f_{j,l+k}\ \mathrm{for}\ k\ne 0.
\end{equation}
We also introduce $h_{i,0},c,c'$ via
$\psi_{i,0}=q^{h_{i,0}},\ \gamma^{1/2}=q^c,\ c'=\sum_{i\in [n]} h_{i,0}$,
so that $c,c'$ are central.

 In Sections 1.3--1.4, we will also need to make sense of the elements
$q^{\frac{h_{i,0}}{2n}}, \gamma^{\frac{1}{2n}}, q^{\frac{d_2}{n}}$.
In such cases, we formally add elements of the form
$q^{\frac{h_{i,0}}{N}},q^{\frac{c}{N}},q^{\frac{d_1}{N}},q^{\frac{d_2}{N}}$ for any $N\in \ZZ_{>0}$.

                                 %%%%%%%%%%%%%% Coproduct and bialgebra pairing for toroidal algebra %%%%%%%%%%%%%%

\subsection{Hopf algebra structure, Hopf pairing, and Drinfeld double}
$\ $

 Following~\cite{FT1}, we recall some of the basic results on $\ddot{U}_{q,d}(\ssl_n)$ which are relevant to us.

\noindent
$\bullet$ \emph{Topological Hopf algebra structure on $\ddot{U}_{q,d}(\ssl_n)$.}

 Following~\cite[Theorem 2.1]{DI}, we endow $\ddot{U}_{q,d}(\ssl_n)$ with a topological Hopf algebra
structure by defining the coproduct $\Delta$, the counit $\epsilon$, and the antipode $S$ as follows:
\begin{multline}\tag{H1}\label{coproduct}
  \Delta(\psi^\pm_i(z))=\psi^\pm_i(\gamma^{\pm 1/2}_{(2)}z)\otimes \psi^\pm_i(\gamma^{\mp 1/2}_{(1)}z),\
  \Delta(x)=x\otimes x\ \mathrm{for}\ x=\gamma^{\pm 1/2}, q^{\pm d_1}, q^{\pm d_2},\\
  \Delta(e_i(z))=e_i(z)\otimes 1+ \psi^-_i(\gamma^{1/2}_{(1)}z)\otimes e_i(\gamma_{(1)}z),\
  \Delta(f_i(z))=1\otimes f_i(z)+f_i(\gamma_{(2)}z)\otimes  \psi_i^+(\gamma^{1/2}_{(2)}z),
\end{multline}
\begin{equation}\tag{H2}\label{counit}
  \epsilon(e_i(z))=\epsilon(f_i(z))=0,\ \epsilon(\psi^{\pm}_i(z))=1,\
  \epsilon(x)=1\ \mathrm{for}\ x=\gamma^{\pm 1/2}, q^{\pm d_1}, q^{\pm d_2},
\end{equation}
\begin{multline}\tag{H3}\label{antipode}
  S(e_i(z))=-\psi^-_i(\gamma^{-1/2}z)^{-1}e_i(\gamma^{-1}z),\
  S(f_i(z))=-f_i(\gamma^{-1}z)\psi^+_i(\gamma^{-1/2}z)^{-1},\\
  S(x)=x^{-1}\ \mathrm{for}\ x=\gamma^{\pm 1/2}, q^{\pm d_1}, q^{\pm d_2},\psi^\pm_i(z),
\end{multline}
where $\gamma_{(1)}:=\gamma\otimes 1$ and $\gamma_{(2)}:=1\otimes \gamma$.

\noindent
$\bullet$ \emph{Sub/quotient-algebras of $\ddot{U}_{q,d}(\ssl_n)$.}

 In what follows, we will need the following subalgebras of $\ddot{U}_{q,d}(\ssl_n)$:

\noindent
  $\circ$ $\ddot{U}^{\geq}$ is the subalgebra of $\ddot{U}_{q,d}(\ssl_n)$ generated by
  $\{e_{i,k},\psi_{i,l}, \psi_{i,0}^{-1}, \gamma^{\pm 1/2}, q^{\pm d_1}, q^{\pm d_2}\}_{i\in [n]}^{k\in \ZZ,l\in -\NN}$.

\noindent
  $\circ$ $\ddot{U}^{\leq}$ is the subalgebra of $\ddot{U}_{q,d}(\ssl_n)$ generated by
  $\{f_{i,k},\psi_{i,l}, \psi_{i,0}^{-1}, \gamma^{\pm 1/2}, q^{\pm d_1}, q^{\pm d_2}\}_{i\in [n]}^{k\in \ZZ,l\in \NN}$.

\noindent
  $\circ$ $\ddot{U}^+,\ddot{U}^-$ are the subalgebras of $\ddot{U}_{q,d}(\ssl_n)$ generated by
  $\{e_{i,k}\}_{i\in [n]}^{k\in \ZZ}$ and $\{f_{i,k}\}_{i\in [n]}^{k\in \ZZ}$, respectively.

\noindent
 $\circ$ $\ddot{U}^0$ is the subalgebra of $\ddot{U}_{q,d}(\ssl_n)$ generated by
  $\{\psi_{i,k},\psi^{-1}_{i,0},\gamma^{\pm 1/2},q^{\pm d_1}, q^{\pm d_2}\}_{i\in [n]}^{k\in \ZZ}$.

 We also define two modifications of $\ddot{U}_{q,d}(\ssl_n)$:

\noindent
$\circ$ Let $\ddot{U}^{'}_{q,d}(\ssl_n)$ be obtained from $\ddot{U}_{q,d}(\ssl_n)$
by ``ignoring'' the generator $q^{\pm d_2}$ and taking a quotient by the ideal $(c')$, i.e., setting $c'=0$.
The subalgebras $\ddot{U}^{'\geq}, \ddot{U}^{'\leq}, \ddot{U}^{'\pm}, \ddot{U}^{'0}$ of $\ddot{U}^{'}_{q,d}(\ssl_n)$
are defined completely analogously to $\ddot{U}^{\geq}, \ddot{U}^{\leq},\ddot{U}^{\pm},\ddot{U}^{0}$ above.

\noindent
$\circ$ Let $^{'}\ddot{U}_{q,d}(\ssl_n)$ be obtained from $\ddot{U}_{q,d}(\ssl_n)$
by ``ignoring'' the generator $q^{\pm d_1}$ and taking a quotient by the ideal $(c)$, i.e., setting $c=0$.
The subalgebras ${^{'}\ddot{U}^{\geq}}, {^{'}\ddot{U}^{\leq}},{^{'}\ddot{U}^{\pm}},{^{'}\ddot{U}^{0}}$ of $^{'}\ddot{U}_{q,d}(\ssl_n)$
are defined completely analogously to $\ddot{U}^{\geq}, \ddot{U}^{\leq},\ddot{U}^{\pm},\ddot{U}^{0}$  above.

\medskip
\noindent
$\bullet$ \emph{Hopf pairing and a Drinfeld double realization of $\ddot{U}_{q,d}(\ssl_n)$.}

  Analogously to the case of quantum affine algebras (see~\cite{G}), we have the following result.

\begin{thm}\label{Drinfeld double sln}
(a)  There exists a unique Hopf algebra  pairing $\varphi\colon \ddot{U}^\geq \times \ddot{U}^\leq\to \CC$  satisfying
\begin{equation*}
  \varphi(e_i(z), f_j(w))=\frac{\delta_{i,j}}{q-q^{-1}}\cdot \delta\left(\frac{z}{w}\right),
  \varphi(\psi^-_i(z),\psi^+_j(w))=g_{a_{i,j}}(d^{m_{i,j}}z/w),
  \varphi(q^{d_2},q^{d_2})=q^{\frac{n(n^2-1)}{12}},
\end{equation*}
\begin{equation*}
  \varphi(e_i(z),x^-)=\varphi(x^+,f_i(z))=0\ \mathrm{for}\  x^\pm=\psi^\mp_j(w), \gamma^{1/2}, q^{d_1}, q^{d_2},
\end{equation*}
\begin{equation*}
  \varphi(\gamma^{1/2}, q^{d_1})=\varphi(q^{d_1},\gamma^{1/2})=q^{-1/2},\
  \varphi(\psi^-_i(z),q^{d_2})=q^{-1},\ \varphi(q^{d_2},\psi^+_i(z))=q,
\end{equation*}
\begin{equation*}
  \varphi(\psi^-_i(z),x)=\varphi(x,\psi^+_i(z))=1\ \mathrm{for}\ x=\gamma^{1/2}, q^{d_1},
\end{equation*}
\begin{equation*}
  \varphi(\gamma^{1/2}, q^{d_2})=\varphi(q^{d_2},\gamma^{1/2})=\varphi(\gamma^{1/2},\gamma^{1/2})=\varphi(q^{d_1},q^{d_1})=
  \varphi(q^{d_1},q^{d_2})=\varphi(q^{d_2},q^{d_1})=1.
\end{equation*}
\noindent
(b) The natural Hopf algebra homomorphism from the Drinfeld double $D_\varphi(\ddot{U}^\geq, \ddot{U}^\leq)$
to $\ddot{U}_{q,d}(\ssl_n)$ induces the isomorphism
   $$\Xi\colon D_\varphi(\ddot{U}^\geq, \ddot{U}^\leq)/I\iso \ddot{U}_{q,d}(\ssl_n)\ \mathrm{with}\
     I:=(x\otimes 1-1\otimes x|x=\psi_{i,0}^{\pm 1}, \gamma^{\pm 1/2}, q^{\pm d_1}, q^{\pm d_2})_{i\in [n]}.$$
\noindent
(c) Analogously to (b), the algebras $\ddot{U}^{'}_{q,d}(\ssl_n)$ and  $^{'}\ddot{U}_{q,d}(\ssl_n)$
admit the Drinfeld double realizations via
$D_{\varphi'}(\ddot{U}^{'\geq},\ddot{U}^{'\leq})$ and $D_{'\varphi}({^{'}\ddot{U}^{\geq}},{^{'}\ddot{U}^{\leq}})$,
where $\varphi'$ and $'\varphi$ are defined similarly to $\varphi$.

\noindent
(d)  The pairings $\varphi, \varphi',$$'\varphi$ are nondegenerate if and only if $q,qd,qd^{-1}$ are not roots of unity.

\noindent
(e) If $q,qd,qd^{-1}$ are not roots of unity, then the algebras
$\ddot{U}_{q,d}(\ssl_n), \ddot{U}^{'}_{q,d}(\ssl_n)$ and $^{'}\ddot{U}_{q,d}(\ssl_n)$ admit the universal $R$-matrices
$R, R^{'}$ and ${^{'}R}$, associated to the pairings $\varphi, \varphi'$ and $'\varphi$, respectively.
\end{thm}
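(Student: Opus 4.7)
The plan is to follow the standard Drinfeld double recipe, with the particular initial values on generators chosen so that the cross-commutation relations recovered from the double agree with the toroidal relations (T1)--(T7.2).

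For part (a), I would first observe that uniqueness is automatic: any Hopf pairing satisfies $\varphi(xy,a)=\varphi(x\otimes y,\Delta(a))$ and $\varphi(x,ab)=\varphi(\Delta(x),a\otimes b)$, so it is determined inductively by its values on a generating set together with the coproducts (H1). Existence is then obtained by using these two identities as the defining rules and checking that all defining relations of $\ddot U^\geq$ (on the left slot) and of $\ddot U^\leq$ (on the right slot) lie in the radical. The choices $\varphi(e_i(z),f_j(w))=\delta_{i,j}/(q-q^{-1})\cdot\delta(z/w)$ and $\varphi(\psi^-_i(z),\psi^+_j(w))=g_{a_{i,j}}(d^{m_{i,j}}z/w)$ are forced: pairing (T4) with $f_j(w)$ determines the first, and pairing (T5) on the right with $\psi_{j,0}^{-1}\psi^+_k$ determines the second. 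The extra grading generators $\gamma^{\pm 1/2},q^{\pm d_1},q^{\pm d_2}$ are grouplike, so their pairings must factor through a bicharacter on the grading lattice; the listed values are the unique ones compatible with the weight shifts in (H1) and with (T0.3)--(T0.4).

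The bulk of the work is verifying compatibility with relations. For each relation $R=0$ in $\ddot U^\geq$ one computes $\varphi(R, fgh\cdots)$ for monomials in the generators of $\ddot U^\leq$ and shows it vanishes, applying the coproduct rules to reduce to the prescribed values on generators. The $e$-$e$ quadratic relation (T2) pairs against $f_j(w)f_i(z)$ and becomes the functional identity $g_{a_{i,j}}(z/w)g_{a_{i,j}}(d^{m_{i,j}}z/w)=g_{a_{i,j}}(d^{m_{i,j}}z/w)g_{a_{i,j}}(z/w)$, which is trivial; checking (T1) and (T5) uses the identity $g_m(z)g_{-m}(z^{-1})=1$. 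The Serre relations (T7.1)--(T7.2) require pairing against cubic monomials and exploiting the symmetrization in $\mathfrak S_2$; this is the most delicate calculation but is standard in the Ding--Iohara setup cited as Theorem~2.1 of~\cite{DI}.

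For part (b), the Drinfeld double relation $x_{(1)}a_{(1)}\varphi(x_{(2)},a_{(2)})=a_{(2)}x_{(2)}\varphi(x_{(1)},a_{(1)})$ applied to $x=e_i(z), a=f_j(w)$ together with (H1) yields precisely (T4); applied to $x=e_i(z), a=\psi^+_j(w)$ it yields (T5); the grading generators' cross relations (T0.3)--(T0.4) similarly fall out. Hence there is a surjective Hopf algebra map $D_\varphi(\ddot U^\geq,\ddot U^\leq)\twoheadrightarrow\ddot U_{q,d}(\ssl_n)$ whose kernel is visibly the ideal $I$ identifying the two copies of the grouplike central generators, giving $\Xi$. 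Parts (c) is proved by the identical argument after quotienting by $c$ or $c'$, noting that the modified pairings $\varphi',\,{}'\varphi$ are well-defined because $c$ (resp.\ $c'$) pairs trivially with everything except the ignored generator.

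For part (d), nondegeneracy reduces, via a PBW-type filtration on $\ddot U^\pm$ coming from the shuffle realization recalled in Section~1.1, to checking nondegeneracy of the Shapovalov-type form on each graded component. The relevant determinants factor into products of $q$-numbers $[k]_q$ and evaluations of $g_m$ at powers of $d$, whose nonvanishing is exactly the condition that $q,qd,qd^{-1}$ be of infinite order. Part (e) is then the general fact that a nondegenerate Hopf pairing on a Drinfeld double produces a canonical universal $R$-matrix given by the canonical element $\sum u_\alpha\otimes u^\alpha$ of $\ddot U^\geq\otimes\ddot U^\leq$ with respect to dual bases, and specializing this construction to $\varphi,\varphi',{}'\varphi$ yields $R,R',{}'R$. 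The principal obstacle throughout is the functional-equation bookkeeping for the Serre relations in step (a); everything else is formal manipulation once the generator-level values are pinned down.
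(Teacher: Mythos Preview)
The paper does not actually give a proof of this theorem: it is stated as a known result, introduced only by the sentence ``Analogously to the case of quantum affine algebras, we have the following result,'' and the text then moves directly to the next subsection. So there is no proof in the paper to compare against; the authors treat (a)--(e) as background facts, with the existence of the Hopf pairing implicit in the Ding--Iohara framework~\cite{DI} and the nondegeneracy in (d) resting on Negut's Theorem~\ref{Negut theorem}~\cite{N}.

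Your outline follows the standard Drinfeld-double argument and is the right shape. A few of the details you sketched are looser than a full proof would require. In (a), your verification of (T2) is not quite the correct computation: pairing $e_i(z)e_j(w)-g_{a_{i,j}}(d^{m_{i,j}}z/w)e_j(w)e_i(z)$ against $f_{i'}(z')f_{j'}(w')$ does not reduce to commuting two copies of the same $g$; it reduces, via the delta functions, to checking an identity among rational functions in $z/w$, and one must keep track of the opposite coproduct on the right slot. In (d), your appeal to ``a PBW-type filtration \ldots\ coming from the shuffle realization'' is really an appeal to the full strength of Theorem~\ref{Negut theorem} (surjectivity of $\Psi$), which is a substantial result of~\cite{N} rather than something one proves along the way; and the shuffle algebra appears in Section~1.7, not~1.1. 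Finally, in (e) the canonical element lives only in a completion, so one must say in what sense $R$ exists. None of these are fatal, but each would need to be filled in for a genuine proof.
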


                                 %%%%%%%%%%%%%% Two copies of quantum affine inside toroidal %%%%%%%%%%%%%%

\subsection{Two copies of $U_q(\widehat{\ssl}_n)$ inside $\ddot{U}_{q,d}(\ssl_n)$}\label{two copies}
$\ $

 Let $U_q(\widehat{\ssl}_n)$ be the quantum affine algebra of $\ssl_n$ presented in the \emph{new Drinfeld realization}, see \cite{Dr2}.
This is the unital associative $\CC$-algebra generated by
$\{e_{i,k}, f_{i,k}, \psi_{i,k}, \psi_{i,0}^{-1}, C^{\pm 1}, \wt{D}^{\pm 1}\}_{i\in [n]^\times}^{k\in \ZZ}$
with the defining relations similar to those of $\ddot{U}_{q,d}(\ssl_n)$ (our notation follow~\cite{M2}):
\begin{equation}\tag{A0.1} \label{A0.1}
  [\psi_i^\pm(z),\psi_j^\pm(w)]=0,\ C^{\pm 1}-\mathrm{central},
\end{equation}
\begin{equation}\tag{A0.2} \label{A0.2}
  \psi_{i,0}^{\pm 1}\cdot \psi_{i,0}^{\mp 1}=C^{\pm 1}\cdot C^{\mp 1}=\wt{D}^{\pm 1}\cdot \wt{D}^{\mp 1}=1,
\end{equation}
\begin{equation}\tag{A0.3}\label{A0.3}
  \wt{D}e_i(z)\wt{D}^{-1}=qe_i(q^{-n}z),\ \wt{D}f_i(z)\wt{D}^{-1}=q^{-1}f_i(q^{-n}z),\ \wt{D}\psi^\pm_i(z)\wt{D}^{-1}=\psi^\pm_i(q^{-n}z),
\end{equation}
\begin{equation}\tag{A1} \label{A1}
%  [h_{i,r}, h_{j,s}]=\delta_{r,-s}\frac{[ra_{i,j}]_q}{r}\cdot \frac{C^r-C^{-r}}{q-q^{-1}},
  g_{a_{i,j}}(C^{-1}z/w)\psi^+_i(z)\psi^-_j(w)=\psi^-_j(w)\psi^+_i(z)g_{a_{i,j}}(Cz/w),
\end{equation}
\begin{equation}\tag{A2} \label{A2}
  e_i(z)e_j(w)=g_{a_{i,j}}(z/w)e_j(w)e_i(z),
\end{equation}
\begin{equation}\tag{A3}\label{A3}
  f_i(z)f_j(w)=g_{a_{i,j}}(z/w)^{-1}f_j(w)f_i(z),
\end{equation}
\begin{equation}\tag{A4}\label{A4}
  (q-q^{-1})[e_i(z),f_j(w)]=\delta_{i,j}\left(\delta(C w/z)\psi_i^+(Cw)-\delta(C z/w)\psi_i^-(Cz)\right),
\end{equation}
\begin{equation}\tag{A5}\label{A5}
%  k_i e_{j,l} k_i^{-1}=q^{a_{i,j}} e_{j,l},\
%  [h_{i,r}, e_{j,l}]=\frac{[ra_{i,j}]_q}{r}C^{(r-|r|)/2}e_{j,l+r},
   \psi_i^+(z)e_j(w)=g_{a_{i,j}}(z/w)e_j(w)\psi_i^+(z),\
   \psi_i^-(z)e_j(w)=g_{a_{i,j}}(C^{-1}z/w)e_j(w)\psi_i^-(z),\
\end{equation}
\begin{equation}\tag{A6}\label{A6}
%  k_i f_{j,l} k_i^{-1}=q^{-a_{i,j}} f_{j,l},\
%  [h_{i,r}, f_{j,l}]=-\frac{[ra_{i,j}]_q}{r}C^{(r+|r|)/2}f_{j,l+r},
   \psi_i^+(z)f_j(w)=g_{a_{i,j}}(C^{-1}z/w)^{-1}f_j(w)\psi_i^+(z),\
   \psi_i^-(z)f_j(w)=g_{a_{i,j}}(z/w)^{-1}f_j(w)\psi_i^-(z),\
\end{equation}
\begin{equation}\tag{A7.1}\label{A7.1}
  \mathrm{Sym}_{z_1,z_2}\ [e_i(z_1),[e_i(z_2),e_j(w)]_q]_{q^{-1}}=0\ \mathrm{if}\ a_{i,j}=-1,\
  [e_i(z), e_j(w)]=0\ \mathrm{if}\ a_{i,j}=0,
\end{equation}
\begin{equation}\tag{A7.2}\label{A7.2}
  \mathrm{Sym}_{z_1,z_2}\ [f_i(z_1),[f_i(z_2),f_j(w)]_q]_{q^{-1}}=0\ \mathrm{if}\ a_{i,j}=-1,\
  [f_i(z), f_j(w)]=0\ \mathrm{if}\ a_{i,j}=0,
\end{equation}
where the generating series $e_i(z), f_i(z), \psi^\pm_i(z)$ are defined as before.

 This algebra is known to admit a classical \emph{Drinfeld--Jimbo realization} of~\cite{Dr1,Jim}.
To state this explicitly, let $U_q^{\drj}(\widehat{\ssl}_n)$ be the unital associative $\CC$-algebra
generated by $\{x^\pm_i,t_i^{\pm 1},D^{\pm 1}\}_{i\in [n]}$
with the following defining relations:
  $$D^{\pm 1} D^{\mp 1}=1,\ Dt_iD^{-1}=t_i,\ Dx^\pm_iD^{-1}=q^{\pm 1}x^\pm_i,$$
  $$t_i^{\pm 1}t_i^{\mp 1}=1,\ t_it_j=t_jt_i,\ t_ix^\pm_jt_i^{-1}=q^{\pm a_{i,j}}x^\pm_j,$$
  $$[x^+_i,x^-_j]=\delta_{i,j}\cdot \frac{t_i-t_i^{-1}}{q-q^{-1}},\
    \sum_{s=0}^{1-a_{i,j}}\frac{(-1)^s}{[s]_q! [1-a_{i,j}-s]_q!} (x^\pm_i)^sx^\pm_j(x^\pm_i)^{1-a_{i,j}-s}=0\ (i\ne j),$$
where $[m]_q!:=[m]_q [m-1]_q\cdots[1]_q$.

 According to~\cite{Dr2}, there is a $\CC(q)$-algebra isomorphism
$\Phi\colon U_q^\drj(\widehat{\ssl}_n)\iso U_q(\widehat{\ssl}_n)$ given by
 $$x^+_i\mapsto e_{i,0},\ x^-_i\mapsto f_{i,0},\ t_i^{\pm 1}\mapsto \psi_{i,0}^{\pm 1}\ (1\leq i\leq n-1),\
   t_0\mapsto C\cdot (\psi_{1,0}\cdots \psi_{n-1,0})^{-1},\ D\mapsto \widetilde{D},$$
 $$x^+_0\mapsto C (\psi_{1,0}\cdots \psi_{n-1,0})^{-1}[\cdots [f_{1,1},f_{2,0}]_q,\cdots,f_{n-1,0}]_q,$$
 $$x^-_0\mapsto [e_{n-1,0},\cdots ,[e_{2,0},e_{1,-1}]_{q^{-1}}\cdots]_{q^{-1}} (\psi_{1,0}\cdots \psi_{n-1,0})C^{-1}.$$

\begin{rem}
(a) The above isomorphism was stated without a proof in~\cite{Dr2}. The inverse $\Phi^{-1}$ was constructed
in~\cite{B} by using the braid group action on $U_q^\drj(\widehat{\ssl}_n)$ due to G.~Lusztig.
The direct verification of the fact that the above assignment gives rise to a $\CC(q)$-algebra homomorphism
$\Phi\colon U_q^\drj(\widehat{\ssl}_n)\iso U_q(\widehat{\ssl}_n)$ was given in~\cite{Jin} by utilizing the technique of
$q$-commutators, which also plays a key computational role in the current paper. However, proofs of injectivity
of $\Phi^{-1}$ and $\Phi$ in~\cite{B,Jin} had a gap, which was only recently filled in~\cite{Da}.

\noindent
(b) In the classical literature, the \emph{grading} elements $D,\widetilde{D}$ satisfy slightly different relations, while
our conventions are better adapted to fit into the toroidal story and follow that of~\cite{M2}.
\end{rem}

 Following~\cite{VV}, we introduce the \emph{vertical} and \emph{horizontal}
copies of $U_q(\widehat{\ssl}_n)$ inside $\ddot{U}_{q,d}(\ssl_n)$.
Consider two algebra homomorphisms $h,v\colon U_q(\widehat{\ssl}_n)\to \ddot{U}_{q,d}(\ssl_n)$ defined by
  $$h\colon x^+_i\mapsto e_{i,0},\ x^-_i\mapsto f_{i,0},\ t_i\mapsto \psi_{i,0},\ D\mapsto q^{d_2},$$
  $$v\colon e_{i,k}\mapsto d^{ik}e_{i,k},\ f_{i,k}\mapsto d^{ik}f_{i,k},\ \psi_{i,k}\mapsto d^{ik}\gamma^{k/2}\psi_{i,k},\
    C\mapsto \gamma,\ \widetilde{D}\mapsto q^{-nd_1}\cdot q^{\sum_{j=1}^{n-1}\frac{j(n-j)}{2}h_{j,0}},$$
where we follow the conventions of Section 1.1 and add elements $q^{h_{j,0}/2}$ to $\ddot{U}_{q,d}(\ssl_n)$.

 According to~\cite{VV}, both $h,v$ are inclusions.
The images of $h$ and $v$, denoted by $\dot{U}^{\mathrm{h}}_q(\ssl_n)$ and $\dot{U}^{\mathrm{v}}_q(\ssl_n)$,
are called the \emph{horizontal} and \emph{vertical} copies of $U_q(\widehat{\ssl}_n)$ inside $\ddot{U}_{q,d}(\ssl_n)$.

\begin{rem}
The injectivity of $h,v$ was stated in~\cite{VV} without a proof, and was used in numeric literature afterwards.
A simple way to see the injectivity is to use the double realization of all algebras involved
(here $U_q(\widehat{\ssl}_n)$ is treated as in Theorem~\ref{Drinfeld double sln}, while $U_q^\drj(\widehat{\ssl}_n)$
is treated with respect to the Drinfeld-Jimbo Borel subalgebras, see e.g.~\cite{KRT}). Both Hopf pairings on
$U_q(\widehat{\ssl}_n)$ and $U_q^\drj(\widehat{\ssl}_n)$ are known to be nondegenerate for $q$ not a root of unity.
Since $h,v$ respect the pairings, their injectivity follows.
\end{rem}

                %%%%%%%%%%%%%%%%%%%%%%%%%%%%%%%%% Miki's "automorphism" %%%%%%%%%%%%%%%%%%%%%%%%%%%%%%%%%%%5

\subsection{Miki's isomorphism}
$\ $

 We recall the beautiful result of K.~Miki which provides an isomorphism
$^{'}\ddot{U}_{q,d}(\ssl_n)\iso \ddot{U}^{'}_{q,d}(\ssl_n)$
intertwining the \emph{vertical} and \emph{horizontal} embeddings of
quantum affine algebras of $\ssl_n$.

 To formulate the main result of this section, we need some more notation.

\noindent
$\circ$ Let $U_q(L\ssl_n)$ be obtained from $U_q(\widehat{\ssl}_n)$ by ``ignoring'' the generator $\wt{D}^{\pm 1}$
and taking a quotient by the ideal $(C-1)$, i.e., setting $C=1$.
The algebra $U_q(L\ssl_n)$ is usually called the quantum loop algebra of $\ssl_n$.
Analogously to $h$ and $v$, we have the following inclusions:
  $$^{'}\ddot{U}_{q,d}(\ssl_n)\overset{{'}h}\hookleftarrow U_q(\widehat{\ssl}_n) \overset{v{'}}\hookrightarrow \ddot{U}^{'}_{q,d}(\ssl_n)$$
and
  $$^{'}\ddot{U}_{q,d}(\ssl_n)\overset{{'}v}\hookleftarrow U_q(L\ssl_n) \overset{h{'}}\hookrightarrow  \ddot{U}^{'}_{q,d}(\ssl_n).$$

\noindent
$\circ$ Let $\sigma$ be the antiautomorphism of $U_q(\widehat{\ssl}_n)$ determined by
  $$\sigma\colon x^\pm_i\mapsto x^\pm_i,\ t_i\mapsto t_i^{-1},\ D\mapsto D^{-1}.$$

\noindent
$\circ$ Let $\eta$ be the antiautomorphism of $U_q(\widehat{\ssl}_n)$ determined by
  $$\eta\colon e_{i,k}\mapsto e_{i,-k},\ f_{i,k}\mapsto f_{i,-k},\ h_{i,l}\mapsto -C^lh_{i,-l},\ \psi_{i,0}\mapsto \psi_{i,0}^{-1},\
    C\mapsto C,\ \wt{D}\mapsto  \wt{D}\cdot \prod_{i=1}^{n-1}\psi_{i,0}^{-i(n-i)}.$$

\noindent
$\circ$ Let $^{'}Q$ be the automorphism of $^{'}\ddot{U}_{q,d}(\ssl_n)$ determined by
  $$^{'}Q\colon e_{i,k}\mapsto (-d)^ke_{i+1,k},\ f_{i,k}\mapsto (-d)^kf_{i+1,k},\ h_{i,l}\mapsto (-d)^lh_{i+1,l},\
   \psi_{i,0}\mapsto \psi_{i+1,0},\ q^{d_2}\mapsto q^{d_2}.$$

\noindent
$\circ$ Let $Q^{'}$ be the automorphism of $\ddot{U}^{'}_{q,d}(\ssl_n)$ such that it maps the generators other than
$\gamma^{\pm 1/2},q^{\pm d_1}$ as $^{'}Q$, while
  $$Q^{'}\colon \gamma^{1/2}\mapsto \gamma^{1/2},\ q^{d_1}\mapsto q^{d_1}\cdot \gamma^{-1}.$$

\noindent
$\circ$ Let $^{'}\Y_j\ (1\leq j\leq n)$ be the automorphism of $^{'}\ddot{U}_{q,d}(\ssl_n)$ determined by
  $$^{'}\Y_j\colon h_{i,l}\mapsto h_{i,l},\ \psi_{i,0}\mapsto \psi_{i,0},\ q^{d_2}\mapsto q^{d_2},$$
  $$e_{i,k}\mapsto (-d)^{-n\delta_{i,0}\delta_{j,n}-i\bar{\delta}_{i,j}+i\delta_{i,j-1}}e_{i,k-\bar{\delta}_{i,j}+\delta_{i,j-1}},\
    f_{i,k}\mapsto (-d)^{n\delta_{i,0}\delta_{j,n}+i\bar{\delta}_{i,j}-i\delta_{i,j-1}}f_{i,k+\bar{\delta}_{i,j}-\delta_{i,j-1}},$$
where
  $\bar{\delta}_{i,j}=
   \begin{cases}
     1 & \text{if}\ \ j \equiv i\ (\mathrm{mod}\ n) \\
     0 &  \text{otherwise}
   \end{cases}.$

\noindent
$\circ$ Let $\Y^{'}_j\ (1\leq j\leq n)$ be the automorphism of $\ddot{U}^{'}_{q,d}(\ssl_n)$ such that it maps the generators other than
$\psi^{\pm 1}_{i,0},\gamma^{\pm 1/2},q^{\pm d_1}$ as $^{'}\Y_j$, while
  $$\Y_j^{'}\colon \gamma^{1/2}\mapsto \gamma^{1/2},\
              \psi_{i,0}\mapsto \gamma^{-\bar{\delta}_{i,j}+\delta_{i,j-1}}\psi_{i,0},$$
  $$\Y_j^{'}\colon q^{d_1}\mapsto q^{d_1}\cdot \gamma^{-\frac{n+1}{2n}}\cdot K_j\
              \mathrm{with}\ K_j=\prod_{l=1}^{j-1} q^{\frac{l}{n}h_{l,0}} \prod_{l=j}^{n-1} q^{\frac{l-n}{n}h_{l,0}},$$
where we follow the conventions of Section 1.1 and add elements
$\gamma^{\frac{1}{2n}}, q^{\frac{h_{j,0}}{2n}}$ to $\ddot{U}^{'}_{q,d}(\ssl_n)$.

\begin{thm}\cite[Proposition 1]{M2}\label{Miki Theorem 1}
There exists an algebra isomorphism
  $$\varpi\colon\ ^{'}\ddot{U}_{q,d}(\ssl_n)\iso \ddot{U}^{'}_{q,d}(\ssl_n)$$
satisfying the following properties:
  $$\varpi\circ {{'}h}=v{'},\ \varpi\circ {{'}v}\circ \eta\circ \sigma=h{'},\
    Q^{'}\circ \Y_n^{'}\circ \varpi=\varpi\circ {^{'}\Y_1^{-1}}\circ {^{'}Q}.$$
\end{thm}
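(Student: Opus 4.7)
The plan is to follow Miki's original strategy and construct $\varpi$ by first specifying it on the two natural generating subalgebras of $^{'}\ddot{U}_{q,d}(\ssl_n)$, namely the horizontal image $^{'}h(U_q(\widehat{\ssl}_n))$ and (a variant of) the vertical image $^{'}v(U_q(L\ssl_n))$, and then verifying it extends consistently to the whole algebra. The guiding idea is the $SL(2,\ZZ)$-like symmetry of the quantum toroidal algebra: there is a ``$90^\circ$ rotation'' that swaps a horizontal copy of $U_q(\widehat{\ssl}_n)$ with a vertical copy, and $\varpi$ is exactly the algebraic incarnation of this rotation (after replacing $q^{\pm d_1}$-type generators by $q^{\pm d_2}$-type ones when passing between the two modifications $^{'}\ddot{U}$ and $\ddot{U}^{'}$).

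First, I would declare $\varpi$ on $^{'}h(U_q(\widehat{\ssl}_n))$ by setting $\varpi\circ {^{'}h}=v^{'}$, which is forced by the first intertwining relation. Next, I would declare $\varpi$ on $^{'}v(U_q(L\ssl_n))$ by setting $\varpi\circ {^{'}v}=h^{'}\circ (\eta\circ\sigma)^{-1}$, as dictated by the second intertwining relation; note this uses that $\eta\circ\sigma$ is a well-defined antiautomorphism of $U_q(\widehat{\ssl}_n)$ which descends to $U_q(L\ssl_n)$ because $C$ is sent to itself. The remaining generators $q^{\pm d_2}, \psi_{i,0}^{\pm 1}, \gamma^{\pm 1/2}$ of $^{'}\ddot{U}_{q,d}(\ssl_n)$ are either present in one of the subalgebras already or can be matched using the obvious identifications of the central and Cartan-type elements; in particular the central element $\gamma$ in the source matches the $\gamma$ in the target via the identification of the two vertical/horizontal affine copies.

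The key step is the verification of relations. Since the toroidal algebra is generated by the two affine copies modulo cross-relations controlled by the $g_{a_{i,j}}(d^{m_{i,j}}z/w)$ factors, one must check that the images satisfy precisely those cross-relations. The relations internal to each affine copy are automatic, as each of ${^{'}h}, v^{'}, {^{'}v}, h^{'}$ is a homomorphism and $\eta, \sigma$ are antihomomorphisms. The cross-relations reduce to explicit identities involving the parameter $d$ and powers of $q$: for example, the commutation of $v^{'}(x_i^\pm)$ with $h^{'}(x_j^\pm)$ in $\ddot{U}^{'}_{q,d}(\ssl_n)$ must match the commutation of $^{'}h(x_i^\pm)$ with $^{'}v\circ\eta\circ\sigma(x_j^\pm)$ in $^{'}\ddot{U}_{q,d}(\ssl_n)$. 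This is where the specific scalar $d^{ik}\gamma^{k/2}$ in the definition of the vertical embedding, and the $\prod \psi_{i,0}^{-i(n-i)}$ correction in $\eta$, are needed to balance the equations. This matching of scalars is the main obstacle and requires a detailed but essentially mechanical computation; the elegant way to organize it is via the Drinfeld--Jimbo presentation of each affine copy and the degree grading coming from $^{'}Q$ and $Q^{'}$.

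Finally, the intertwining property $Q^{'}\circ \Y_n^{'}\circ \varpi = \varpi\circ {^{'}\Y_1^{-1}}\circ {^{'}Q}$ is checked generator by generator; it amounts to tracking the cyclic shift $i\mapsto i+1$ together with the $\gamma^{\pm 1/(2n)}$ and $K_j$ corrections through $\varpi$. Invertibility is obtained by constructing an inverse $\varpi^{-1}$ symmetrically (the construction is manifestly symmetric in $^{'}\ddot{U}$ and $\ddot{U}^{'}$ once one exchanges the roles of horizontal and vertical), so uniqueness up to these intertwining conditions pins down $\varpi$. Because a complete treatment appears in \cite{M2}, I would present this as a sketch, emphasize the three compatibilities as the defining characterization, and defer the detailed scalar-matching computations to the cited reference.
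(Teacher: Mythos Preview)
The paper does not give its own proof of this theorem at all: it is simply quoted from \cite[Proposition 1]{M2} and used as a black box, with only a remark on the change of generators. Your proposal is therefore not comparable to any argument in the paper; what you have written is a reasonable sketch of Miki's original strategy, and since you explicitly defer the detailed verifications to \cite{M2}, it is consistent with how the paper itself treats the result.
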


\begin{rem}
(a) Let $U^{\mathrm{tor}}_{q,d}(\ssl_n)$ be obtained from $\ddot{U}_{q,d}(\ssl_n)$
by ``ignoring'' the generators $q^{\pm d_1}$ and $q^{\pm d_2}$.
The construction of $\varpi$ in~\cite{M2} was based on the previous work~\cite{M},
where an automorphism $\overline{\varpi}$ of $U^{\mathrm{tor}}_{q,d}(\ssl_n)$ was established.

\noindent
(b) The generators $x^{\pm}_{i,k},  h_{i,l}, k_i^{\pm 1}, C^{\pm 1}, D^{\pm 1}, \wt{D}^{\pm 1}$ from~\cite{M2}
are related to our generators via
  $$x^+_{i,k}\leftrightarrow d^{ik}e_{i,k},\ x^-_{i,k}\leftrightarrow d^{ik}f_{i,k},\ h_{i,l} \leftrightarrow d^{il}\gamma^{l/2}h_{i,l},$$
  $$k^{\pm 1}_i\leftrightarrow \psi^{\pm 1}_{i,0},\ C^{\pm 1}\leftrightarrow \gamma^{\pm 1},\ D^{\pm 1}\leftrightarrow q^{\pm d_2},\
    \wt{D}^{\pm 1}\leftrightarrow q^{\mp nd_1}\cdot q^{\pm \sum_{j=1}^{n-1} \frac{j(n-j)}{2}h_{j,0}},$$
while the parameters $q,\xi$ from~\cite{M2} are related to our parameters $q,d$ via
  $$q \leftrightarrow q,\ \xi \leftrightarrow d^{-n}.$$

\noindent
(c) The aforementioned choice of generators from~\cite{M2} is
convenient as there is no need to add elements
$\left\{q^{\frac{h_{j,0}}{N}}, q^{\frac{c}{N}}, q^{\frac{d_1}{N}}, q^{\frac{d_2}{N}}\right\}_{N\in \ZZ_{>0}}$.
However, we prefer the current presentation as it is more symmetric and suitable for the rest of our exposition.
\end{rem}

 We conclude this subsection by computing images of some generators of $^{'}\ddot{U}_{q,d}(\ssl_n)$ under $\varpi$.

\begin{prop}\label{explicit formulas for Miki}
(a) We have
  $$\varpi\colon e_{i,0}\mapsto e_{i,0},\ f_{i,0}\mapsto f_{i,0},\ \psi^{\pm 1}_{i,0}\mapsto \psi^{\pm 1}_{i,0}\ \mathrm{for}\ i\in [n]^\times,$$
  $$\varpi\colon \psi_{0,0}^{\pm 1}\mapsto \gamma^{\pm 1} \cdot \psi_{0,0}^{\pm 1},\
    q^{\pm d_2}\mapsto q^{\mp nd_1}\cdot q^{\pm \sum_{j=1}^{n-1} \frac{j(n-j)}{2}h_{j,0}},$$
  $$\varpi\colon e_{0,0}\mapsto d\cdot \gamma\psi_{0,0}\cdot [\cdots[f_{1,1},f_{2,0}]_q,\cdots,f_{n-1,0}]_q,$$
  $$\varpi\colon f_{0,0}\mapsto d^{-1} \cdot [e_{n-1,0},\cdots,[e_{2,0},e_{1,-1}]_{q^{-1}}\cdots]_{q^{-1}}\cdot \psi_{0,0}^{-1}\gamma^{-1}.$$

\noindent
(b) For $i\in [n]^\times$, we have
  $$\varpi(h_{i,1})=
    (-1)^{i+1}d^{-i}\cdot [[\cdots[[\cdots[f_{0,0},f_{n-1,0}]_q,\cdots,f_{i+1,0}]_q,f_{1,0}]_q,\cdots ,f_{i-1,0}]_q,f_{i,0}]_{q^2},$$
  $$\varpi(h_{i,-1})=
    (-1)^{i+1}d^i\cdot [e_{i,0},[\cdots,[e_{1,0},[e_{i+1,0},\cdots,[e_{n-1,0},e_{0,0}]_{q^{-1}}\cdots]_{q^{-1}}]_{q^{-1}}\cdots]_{q^{-1}}]_{q^{-2}}.$$

\noindent
(c) For $i=0$, we have
  $$\varpi(h_{0,1})=(-1)^nd^{1-n}\cdot [[\cdots[f_{1,1},f_{2,0}]_q,\cdots,f_{n-1,0}]_q,f_{0,-1}]_{q^2},$$
  $$\varpi(h_{0,-1})=(-1)^nd^{n-1}\cdot [e_{0,1},[e_{n-1,0},\cdots,[e_{2,0},e_{1,-1}]_{q^{-1}}\cdots]_{q^{-1}}]_{q^{-2}}.$$

\noindent
(d) We have
  $$\varpi(e_{0,-1})=(-d)^{n}e_{0,1},\ \varpi(f_{0,1})=(-d)^{-n}f_{0,-1}.$$
\end{prop}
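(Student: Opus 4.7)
The plan is to exploit the three intertwining properties of Miki's isomorphism $\varpi$ recorded in Theorem~\ref{Miki Theorem 1}:
$$(\mathrm{I})\ \varpi\circ {^{'}h}=v^{'},\qquad (\mathrm{II})\ \varpi\circ {^{'}v}\circ \eta\circ \sigma=h^{'},\qquad (\mathrm{III})\ Q^{'}\circ \Y_n^{'}\circ \varpi=\varpi\circ {^{'}\Y_1^{-1}}\circ {^{'}Q}.$$
I shall use~(I) for~(a), (III) for~(d), and~(II) for~(b),~(c), relying throughout on the explicit actions of $v,v^{'},{^{'}h},h^{'},{^{'}Q},Q^{'},{^{'}\Y_1},\Y_n^{'}$ on generators from Sections~1.3--1.4.

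For~(a), I apply~(I) to the Drinfeld--Jimbo generators $x^\pm_i,t_i^{\pm 1},D$. For $i\in [n]^\times$, the identifications $x^\pm_i=e_{i,0},f_{i,0}$ and $t_i=\psi_{i,0}$, combined with $v(e_{i,0})=e_{i,0},v(f_{i,0})=f_{i,0},v(\psi_{i,0})=\psi_{i,0}$, give $\varpi(e_{i,0})=e_{i,0}$, $\varpi(f_{i,0})=f_{i,0}$, $\varpi(\psi_{i,0}^{\pm 1})=\psi_{i,0}^{\pm 1}$. The formula for $\varpi(q^{\pm d_2})$ follows from ${^{'}h}(D)=q^{d_2}$ together with $v^{'}(D)=v(\wt D)=q^{-nd_1}q^{\sum_{j=1}^{n-1}\frac{j(n-j)}{2}h_{j,0}}$. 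For $i=0$, I substitute the Drinfeld--Jimbo/new-Drinfeld identifications
$$t_0=C(\psi_{1,0}\cdots\psi_{n-1,0})^{-1},\ x^+_0=t_0[\cdots[f_{1,1},f_{2,0}]_q,\cdots,f_{n-1,0}]_q,\ x^-_0=[e_{n-1,0},\cdots,[e_{2,0},e_{1,-1}]_{q^{-1}}\cdots]_{q^{-1}}t_0^{-1}$$
from Section~1.3 and apply $v^{'}$ using $v(C)=\gamma$, $v(e_{i,k})=d^{ik}e_{i,k}$, $v(f_{i,k})=d^{ik}f_{i,k}$. The $d^{\pm 1}$ factors arise precisely from $v(f_{1,1})=df_{1,1}$ and $v(e_{1,-1})=d^{-1}e_{1,-1}$, and the simplification $(\psi_{1,0}\cdots\psi_{n-1,0})^{-1}=\psi_{0,0}$ valid in $\ddot U^{'}$ (thanks to $c'=0$) produces the $\gamma\psi_{0,0}$ and $\psi_{0,0}^{-1}\gamma^{-1}$ prefactors. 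For~(d), apply~(III) to $e_{n-1,0}$ and $f_{n-1,0}$, which are fixed by~$\varpi$ per~(a). A direct unpacking of the definitions of ${^{'}Q},{^{'}\Y_1},Q^{'},\Y_n^{'}$ gives $Q^{'}\Y_n^{'}(e_{n-1,0})=(-d)^{n-1}\cdot(-d)\cdot e_{0,1}=(-d)^ne_{0,1}$ and ${^{'}\Y_1^{-1}}\,{^{'}Q}(e_{n-1,0})=e_{0,-1}$, so~(III) yields $\varpi(e_{0,-1})=(-d)^ne_{0,1}$; the parallel computation on $f_{n-1,0}$ gives $\varpi(f_{0,1})=(-d)^{-n}f_{0,-1}$.

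For~(b) and~(c), I use~(II). Since $h_{i,\pm 1}\in U_q(L\ssl_n)$ and ${^{'}v}(h_{i,\pm 1})=d^{\pm i}h_{i,\pm 1}$ in $^{'}\ddot U$ (where $\gamma=1$), the identity $\varpi\circ {^{'}v}=h^{'}\circ\sigma\circ\eta$ reduces the claim to
$$\varpi(h_{i,\pm 1})=d^{\mp i}\cdot h^{'}\bigl(\sigma\eta(h_{i,\pm 1})\bigr).$$
Using $\eta(h_{i,\pm 1})=\mp h_{i,\mp 1}$ (thanks to $C=1$ in $U_q(L\ssl_n)$), the key step is to realize $\sigma(h_{i,\mp 1})\in U_q(\widehat{\ssl}_n)$ as a specific nested $q$-commutator of the Drinfeld--Jimbo generators $\{x^\pm_j\}_{j\in[n]}$ via Drinfeld's new-realization formulas, and then push this expression through the monomorphism $h^{'}$, which sends $x^\pm_j\mapsto e_{j,0},f_{j,0}$ for all $j\in[n]$. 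The resulting nested bracket reassembles into the iterated expression on the right-hand side; the overall sign $(-1)^{i+1}$ and the outermost $q^2$-bracket (versus the inner $q$-brackets) reflect $a_{i,i}=2$ together with the combinatorics of $\sigma$ acting on nested commutators. The main obstacle is producing the precise Drinfeld--Jimbo realization of $h_{i,\pm 1}$ as an iterated $q$-bracket of $\{x^\pm_j\}_{j\in[n]}$ and carefully tracking the bracket structure, $q$-powers, and signs through $\sigma$ to match the right-hand side exactly.
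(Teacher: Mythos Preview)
Your treatment of parts~(a) and~(d) is correct and matches the paper exactly. Your approach to~(b) via property~(II) is also the paper's: the paper uses the commutator identity $[a,[b,c]_u]_v=[[a,b]_x,c]_{uv/x}+x\cdot [b,[a,c]_{v/x}]_{u/x}$ applied iteratively to the new-Drinfeld formula for $x^\pm_0$ to obtain explicit Drinfeld--Jimbo expressions for $h_{i,\pm 1}$ ($i\in[n]^\times$), and then invokes $\varpi\circ{^{'}v}\circ\eta=h^{'}\circ\sigma^{-1}$. You identify this step correctly as the main obstacle but do not name the tool; the iterated commutator identity is what makes the bookkeeping tractable. (A minor slip: $\eta(h_{i,\pm 1})=-h_{i,\mp 1}$ in $U_q(L\ssl_n)$, not $\mp h_{i,\mp 1}$.)

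There is, however, a genuine gap in your argument for~(c). You propose to treat $i=0$ by the same route as~(b), writing ``Since $h_{i,\pm 1}\in U_q(L\ssl_n)$ and ${^{'}v}(h_{i,\pm 1})=d^{\pm i}h_{i,\pm 1}$''. But $U_q(L\ssl_n)$ has generators indexed only by $i\in[n]^\times$; there is no $h_{0,\pm 1}$ there, and the element $h_{0,\pm 1}\in{^{'}\ddot U}_{q,d}(\ssl_n)$ does \emph{not} lie in the image of ${^{'}v}$. Consequently property~(II) gives no direct access to $\varpi(h_{0,\pm 1})$. This is visible in the target formulas themselves: the right-hand sides in~(c) involve $f_{1,1},f_{0,-1}$ and $e_{0,1},e_{1,-1}$, which are not in the image of $h^{'}$ (that image is generated by the $e_{j,0},f_{j,0},\psi_{j,0}$). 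The paper instead derives~(c) from~(b) by applying property~(III) to $h_{n-1,\pm 1}$: one computes ${^{'}\Y_1^{-1}}\circ{^{'}Q}$ on $h_{n-1,\pm 1}$ (yielding a multiple of $h_{0,\pm 1}$) and $Q^{'}\circ\Y_n^{'}$ on the already-known multicommutator formula for $\varpi(h_{n-1,\pm 1})$, and the nontrivial shifts produced by $\Y_n^{'}$ on $e_{n-1,\bullet},f_{n-1,\bullet}$ and by $Q^{'}$ on the index $n-1\mapsto 0$ are exactly what generate the $f_{1,1},f_{0,-1}$ (resp.\ $e_{0,1},e_{1,-1}$) appearing in~(c).
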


\begin{proof}[Proof of Proposition~\ref{explicit formulas for Miki}]
$\ $

(a) Follow straightforwardly by applying the equality $\varpi\circ {{'}h}=v{'}$
to the explicit formulas for $\Phi(x^\pm_i), \Phi(t_i), \Phi(D)$ with
$\Phi\colon U_q^\drj(\widehat{\ssl}_n)\iso U_q(\widehat{\ssl}_n)$ from subsection~\ref{two copies}.

(b) We will need the following formulas expressing $h_{i,\pm 1}$ in the Drinfeld-Jimbo presentation:
\begin{equation}\label{hi1-dj}
  \Phi^{-1}(h_{i,1})=(-1)^i[[\cdots [[\cdots[x_0^+,x^+_{n-1}]_{q^{-1}},\cdots, x^+_{i+1}]_{q^{-1}}, x^+_1]_{q^{-1}},\cdots,x^+_{i-1}]_{q^{-1}},x^+_i]_{q^{-2}},
\end{equation}
\begin{equation}\label{hi-1-dj}
  \Phi^{-1}(h_{i,-1})=(-1)^i[x^-_i,\cdots,[x^-_1,[x^-_{i+1},\cdots,[x^-_{n-1},x^-_0]_q\cdots ]_q]_q\cdots ]_{q^2}.
\end{equation}
Formulas~(\ref{hi1-dj},\ref{hi-1-dj}) are proved by applying iteratively two useful identities
involving $q$-brackets:
   $$[a,[b,c]_u]_v=[[a,b]_x,c]_{uv/x}+x\cdot [b,[a,c]_{v/x}]_{u/x},$$
   $$[[a,b]_u,c]_v=[a,[b,c]_x]_{uv/x}+x\cdot [[a,c]_{v/x},b]_{u/x},$$
compare to our proof of Theorem~\ref{main5}
(we leave verification of details to the interested reader).

Applying the equality $\varpi\circ {{'}v}\circ \eta=h{'}\circ \sigma^{-1}$ to formulas~(\ref{hi1-dj}) and~(\ref{hi-1-dj}),
we get the claimed formulas for $\varpi(h_{i,-1})$ and $\varpi(h_{i,1})$, respectively.

(c) Applying the equality $Q^{'}\circ \Y_n^{'}\circ \varpi=\varpi\circ {^{'}\Y_1^{-1}}\circ {^{'}Q}$ to $h_{n-1,\pm 1}$
and using the formulas for $\varpi(h_{n-1,\pm 1})$ from (b), we obtain the formulas for $\varpi(h_{0,\pm 1})$.

(d) It suffices to apply the equality $Q^{'}\circ \Y_n^{'}\circ \varpi=\varpi\circ {^{'}\Y_1^{-1}}\circ {^{'}Q}$ to $e_{n-1,0}$ and $f_{n-1,0}$.
\end{proof}

                                 %%%%%%%%%%%%%% Fock and Macmahon representations %%%%%%%%%%%%%%

\subsection{Fock and Macmahon modules}\label{section Fock}
$\ $

In this section, we recall two interesting classes of $^{'}\ddot{U}_{q,d}(\ssl_n)$-modules constructed in~\cite{FJMM1}.
They depend on two parameters: $0\leq p\leq n-1$ and $u\in \CC^\times$. We also set
\begin{equation}\tag{$\diamond$}
   q_1:=q^{-1}d,\ q_2:=q^2,\ q_3:=q^{-1}d^{-1}\ \mathrm{and}\ \phi(t):=\frac{q^{-1}t-q}{t-1}.
\end{equation}

\underline{Assumption:} In the rest of this paper, we assume that $q_1,q_2,q_3$ are \emph{generic}, that is,
\begin{equation}\tag{G}\label{generic}
q_1^aq_2^bq_3^c=1\ \mathrm{for\ some}\ a,b,c\in \ZZ\ \mathrm{implies}\ a=b=c.
\end{equation}

 Given $\nu\in \CC^\times$ and a collection of formal series $\boldsymbol{\phi}(z)=\{\phi^\pm_i(z)\}_{i\in [n]},\ \phi^\pm_i(z)\in \CC[[z^{\mp 1}]]$,
a vector $v$ of an $^{'}\ddot{U}_{q,d}(\ssl_n)$-module $V$ is
said to have \emph{weight} $(\nu;\boldsymbol{\phi}(z))$ if $q^{d_2}v=\nu\cdot v$ and
$\psi^\pm_i(z)v=\phi^\pm_i(z)\cdot v$ for any $i\in [n]$.
The module $V$ is called a \emph{lowest weight module}
if it is generated by a weight vector $v$ such that ${^{'}\ddot{U}^{-}}v=0$.
Such $v$ is called a \emph{lowest weight vector}, and its weight the \emph{lowest weight} of $V$.
Given $\nu\in \CC^\times$ and $\boldsymbol{\phi}(z)$ with $\phi^+_i(\infty)\phi^-_i(0)=1$ for every $i\in [n]$,
there is a unique irreducible lowest weight module of that lowest weight.
If $\phi^\pm_i(z)$ are expansions of a rational function
$\phi_i(z)$ at $z=0,\infty$, then we write $(\nu;\boldsymbol{\phi}(z))=(\nu;\phi_i(z))_{i\in [n]}$.

\medskip
\noindent
$\bullet$ \emph{Fock modules $F^{(p)}(u)$.}

 The most basic lowest weight $^{'}\ddot{U}_{q,d}(\ssl_n)$-modules are the Fock modules $F^{(p)}(u)$
with the basis $\{|\lambda\rangle\}$ labeled by all partitions $\lambda$.
Given such a partition $\lambda=(\lambda_1,\lambda_2,\ldots)$, we define
$\lambda+1_l:=(\lambda_1,\ldots\lambda_l+1,\ldots)$, $|\lambda|:=\sum_l\lambda_l$, and
we denote the transposed partition by $\lambda'=(\lambda'_1,\lambda'_2,\ldots)$. We also write $a\equiv b$ if $a-b$ is divisible by $n$.

\begin{prop}\label{Fock module}~\cite[Proposition 3.3]{FJMM1}
The $^{'}\ddot{U}_{q,d}(\ssl_n)$-action on $F^{(p)}(u)$ is given by the following formulas:
  $$\langle \lambda+1_l| e_i(z) |\lambda\rangle=
    \bar{\delta}_{p+l-\lambda_l,i+1}\prod_{1\leq s<l}^{p+s-\lambda_s\equiv i} \phi(q_1^{\lambda_s-\lambda_l-1}q_3^{s-l})
    \prod_{1\leq s<l}^{p+s-\lambda_s\equiv i+1} \phi(q_1^{\lambda_l-\lambda_s}q_3^{l-s}) \delta(q_1^{\lambda_l}q_3^{l-1}u/z),$$
  $$\langle \lambda| f_i(z) |\lambda+1_l\rangle=
    \bar{\delta}_{p+l-\lambda_l,i+1}\prod_{s>l}^{p+s-\lambda_s\equiv i} \phi(q_1^{\lambda_s-\lambda_l-1}q_3^{s-l})
    \prod_{s>l}^{p+s-\lambda_s\equiv i+1} \phi(q_1^{\lambda_l-\lambda_s}q_3^{l-s}) \delta(q_1^{\lambda_l}q_3^{l-1}u/z),$$
  $$\langle \lambda| \psi^\pm_i(z) |\lambda\rangle=
    \prod_{s\geq 1}^{p+s-\lambda_s\equiv i} \phi(q_1^{\lambda_s-1}q_3^{s-1}u/z)
    \prod_{s\geq 1}^{p+s-\lambda_s\equiv i+1} \phi(q_1^{\lambda_s-1}q_3^{s-2}u/z)^{-1},\
    \langle \lambda| q^{d_2}|\lambda\rangle=q^{|\lambda|},$$
while all other matrix coefficients are zero.
$F^{(p)}(u)$ is an irreducible lowest weight module of the lowest weight $(1;\phi(z/u)^{\delta_{i,p}})_{i\in [n]}$
and with $|\emptyset\rangle$ being the corresponding lowest weight vector.
\end{prop}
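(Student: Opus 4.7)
The plan is to construct a $^{'}\ddot{U}_{q,d}(\ssl_n)$-action on the free vector space with basis $\{|\lambda\rangle\}$ by the stated formulas, verify the defining relations (T1)--(T7.2), and then identify the resulting module with the irreducible lowest weight module of weight $(\phi(z/u)^{\delta_{i,p}})_{i\in [n]}$. First I would check that the formulas are well-posed: the factor $\bar{\delta}_{p+j-\lambda_j,i+1}$ enforces that $e_i(z)$ (resp.\ $f_i(z)$) creates (resp.\ destroys) a box only in a row whose would-be corner has residue $i$, and the products $\prod_{s}\phi(\cdots)$ indexed by $s\geq 1$ are finite because all but finitely many parts of $\lambda$ vanish. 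One must also observe that if $\lambda+1_j$ is not a partition (i.e.\ $\lambda_{j-1}=\lambda_j$), then among the factors in the $e_i(z)$-matrix element one finds a zero of $\phi$ at $t=1$, so the would-be non-partition states never appear, and similarly on the $f_i(z)$ side.

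Next I would verify the relations. The Cartan/weight compatibility (\ref{T1}), (\ref{T5'}), (\ref{T6'}) reduces to telescoping the product expression for $\langle\lambda|\psi^\pm_i(z)|\lambda\rangle$ under $\lambda\mapsto \lambda\pm 1_j$ and comparing with the $g_{a_{i,j}}$-factor coming from the appropriate $q_1,q_3$-shifts; this is a routine rational-function identity. The quadratic relations (\ref{T2}), (\ref{T3}) come from computing two matrix elements $\langle\lambda+1_{j_1}+1_{j_2}|e_i(z)e_j(w)|\lambda\rangle$ in two orders and noting that the ratio of the two $\phi$-products is exactly $g_{a_{i,j}}(d^{m_{i,j}}z/w)$ evaluated at the specialized spectral parameters dictated by the $\delta$-functions. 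The Serre relations (\ref{T7.1}), (\ref{T7.2}) reduce, after the same specialization, to a symmetrization of at most three $\phi$-factors over $\mathfrak{S}_2$, which vanishes by a direct computation using $\phi(t)\phi(t^{-1})=1$ and the explicit values of $q_i$.

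The central step, and the main obstacle, is relation (\ref{T4}) for $[e_i(z),f_j(w)]$. On $|\lambda\rangle$, the composition $e_i(z)f_j(w)$ passes through intermediate states $|\lambda-1_{j_2}\rangle$ and lands in $|\lambda-1_{j_2}+1_{j_1}\rangle$, and similarly for $f_j(w)e_i(z)$. When $j_1\ne j_2$ the two orderings produce identical products of $\phi$-factors, except that one is missing a single pair of factors involving rows $j_1$ and $j_2$; these contributions cancel in the commutator. Hence the only surviving terms have $j_1=j_2=:j_0$ (which forces $i=j$) and $\lambda-1_{j_0}+1_{j_0}=\lambda$. One must then sum over $j_0$, localize via the two specializations $z=q_1^{\lambda_{j_0}-1}q_3^{j_0-1}u$ and $w=q_1^{\lambda_{j_0}-1}q_3^{j_0-1}u$, and recognize the difference of the resulting rational functions in $z,w$ as $\delta(w/z)\psi^+_i(w)-\delta(z/w)\psi^-_i(z)$ via the standard partial-fraction decomposition $\frac{1}{z-a}-\frac{1}{w-a}\big|_{\mathrm{formal}}$. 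Matching the surviving $\phi$-products with the stated eigenvalue of $\psi^\pm_i(z)$ on $|\lambda\rangle$ is the genuinely delicate combinatorial step.

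Once the action is established, the lowest weight property follows immediately: $f_i(z)|\emptyset\rangle=0$ since there are no rows to act on, and the formula for $\psi^\pm_i(z)|\emptyset\rangle$ collapses (only $s=1$ contributes) to $\phi(u/z)^{\delta_{i,p}}$. Cyclicity is clear because $|\lambda\rangle$ is reached from $|\emptyset\rangle$ by successive applications of $e_i(z)$'s that add the boxes of $\lambda$ in a standard order. Irreducibility of the constructed module, together with uniqueness of the irreducible lowest weight module of the given lowest weight, then identifies it with $F^{(p)}(u)$.
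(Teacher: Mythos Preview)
The paper does not prove this proposition; it is quoted from \cite[Proposition 3.3]{FJMM1} and no argument is given in the present text. So there is no ``paper's own proof'' to compare your proposal against. Your outline is the natural direct-verification strategy and is essentially how the result is obtained in \cite{FJMM1}: define operators by the stated matrix coefficients, check the defining relations (T0)--(T7), and then identify the resulting module with the irreducible lowest weight module of the prescribed lowest weight.

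Two small slips in your sketch are worth flagging. First, $\phi(t)=(q^{-1}t-q)/(t-1)$ has a \emph{pole} at $t=1$, not a zero; the vanishing that suppresses the would-be non-partition state when $\lambda_{j-1}=\lambda_j$ comes from the $s=j-1$ factor in the first product, which specializes to $\phi(q_1^{-1}q_3^{-1})=\phi(q^2)=0$. Second, the identity $\phi(t)\phi(t^{-1})=1$ is false for generic $q$ (one computes $\phi(t)\phi(t^{-1})=\frac{t^2-(q^2+q^{-2})t+1}{(t-1)^2}$), so it cannot be the mechanism behind the Serre relations; those are instead verified by the explicit rational form of the specialized $\phi$-products, or more structurally by realizing $F^{(p)}(u)$ inside a semi-infinite wedge of vector representations as in \cite{FJMM1}. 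These are cosmetic issues; the architecture of your argument is sound.
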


\begin{defn}
 For $\bar{c}\in (\CC^\times)^{[n]}$, let $\tau^p_{u,\bar{c}}$ be the twist of this representation
by the automorphism $\chi_{p,\bar{c}}$ of ${^{'}\ddot{U}}_{q,d}(\ssl_n)$ defined via
  $e_{i,k}\mapsto c_ie_{i,k},\ f_{i,k}\mapsto c_i^{-1}f_{i,k},\ \psi_{i,k}\mapsto \psi_{i,k},\
   q^{d_2}\mapsto q^{-\frac{p(n-p)}{2}}\cdot q^{d_2}$.
\end{defn}

 Given a collection $\{(p_k,u_k,\bar{c}_k)\}_{k=1}^r$ with
$0\leq p_k\leq n-1, u_k\in \CC^\times, \bar{c}_k\in (\CC^\times)^{[n]}$,
we call it \emph{generic} if for any pair $1\leq s'<s\leq r$,
there are no $a,b\in \ZZ$ such that $b-a\equiv p_{s'}-p_s$ and $u_s=u_{s'}q_1^{-a}q_3^{-b}$.
We have the following simple result (see~\cite[Lemma 4.1]{FJMM1}).

\begin{lem}\label{tensor}
For a generic collection $\{(p_k,u_k,\bar{c}_k)\}_{k=1}^r$, the
coproduct $\Delta$ of~(\ref{coproduct}) endows
$\tau^{p_1}_{u_1,\bar{c}_1}\otimes \cdots\otimes \tau^{p_r}_{u_r,\bar{c}_r}$
with a structure of an $^{'}\ddot{U}_{q,d}(\ssl_n)$-module.
It is an irreducible lowest weight module generated by the lowest weight vector
$|\emptyset\rangle\otimes \cdots\otimes |\emptyset\rangle$.
\end{lem}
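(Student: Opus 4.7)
The plan has three main steps: (i) verifying that the coproduct (\ref{coproduct}) produces a well-defined action on $M:=\tau^{p_1}_{u_1,\bar{c}_1}\otimes\cdots\otimes \tau^{p_r}_{u_r,\bar{c}_r}$, (ii) exhibiting $v_\emptyset:=|\emptyset\rangle\otimes\cdots\otimes|\emptyset\rangle$ as a lowest weight vector, and (iii) establishing irreducibility via the genericity hypothesis. For (i), the only potentially delicate formulas are $\Delta(e_i(z))$ and $\Delta(f_i(z))$, whose mixed terms $\psi^-_i(\gamma^{1/2}_{(1)}z)\otimes e_i(\gamma_{(1)}z)$ and $f_i(\gamma_{(2)}z)\otimes \psi^+_i(\gamma^{1/2}_{(2)}z)$ require a Cartan operator in one factor to be evaluated at a position determined by the matrix coefficients of $e_i$ or $f_i$ in the other factor. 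Since by Proposition~\ref{Fock module} these matrix coefficients are finite sums of delta functions and each $\psi^\pm_i(\cdot)$ acts by an explicit rational function, the substitution yields a finite formal expression, unambiguous as long as no substituted value lands on a pole of $\psi^\mp_i$; the generic condition is tailored to exclude precisely such coincidences between distinct Fock factors. The defining relations (\ref{T1})--(\ref{T7.2}) then transport to $M$ automatically via the Hopf axioms of Theorem~\ref{Drinfeld double sln}.

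For (ii), the coproduct $\Delta(f_i(z))=1\otimes f_i(z)+f_i(\gamma_{(2)}z)\otimes \psi^+_i(\gamma^{1/2}_{(2)}z)$ annihilates $v_\emptyset$ because $f_i(w)|\emptyset\rangle=0$ in each factor, so both summands vanish. A straightforward computation using multiplicativity of $\Delta$ on the Cartan part gives
\begin{equation*}
\psi^\pm_i(z)\,v_\emptyset\;=\;\prod_{j=1}^{r} \phi(z/u_j)^{\delta_{i,p_j}}\cdot v_\emptyset,
\end{equation*}
exhibiting $v_\emptyset$ as a lowest weight vector of the expected weight.

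The main obstacle is (iii). The idea is a joint weight argument: by the coproduct, the $\psi^\pm_i(z)$-eigenvalue on a pure tensor $|\lambda^{(1)}\rangle\otimes\cdots\otimes|\lambda^{(r)}\rangle$ factors as the product over $j$ of the rational eigenvalues from Proposition~\ref{Fock module}, whose zeros and poles are explicitly indexed by the boxes of $\lambda^{(j)}$ together with $(p_j,u_j)$. The generic condition $u_jq_1^{b-a}q_2^{b+1}\neq u_{j'}$ for $b-a\equiv p_j-p_{j'}\,(\mathrm{mod}\,n)$ is exactly what prevents any such zero or pole in one factor from coinciding with one in another, so the joint eigenvalue uniquely determines the tuple $(\lambda^{(1)},\dots,\lambda^{(r)})$. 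Consequently any nonzero submodule of $M$ contains a pure tensor of basis vectors, and iteratively applying the mixed coproduct of $f_i(z)$ and extracting residues at appropriate poles---following the single-factor analysis of~\cite[Proposition~3.3]{FJMM1}---descends it to $v_\emptyset$; dually, the raising operators then recover all of $M$. The delicate bookkeeping lies in verifying that these residue extractions on an $r$-fold tensor suffer no accidental cancellations, but this once again reduces to the genericity hypothesis.
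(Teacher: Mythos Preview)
The paper does not actually prove this lemma: it is stated with the parenthetical ``(see~\cite[Lemma 4.1]{FJMM1})'' and no argument is given. Your three-step outline---well-definedness of the coproduct action via the absence of pole/delta-position coincidences, identification of $v_\emptyset$ as a lowest weight vector, and irreducibility from the separation of joint Cartan eigenvalues under the genericity hypothesis---is the standard proof and is essentially what one finds in the cited reference. As a sketch it is correct; the only place where real work hides is in checking that the stated congruence-and-ratio condition on $(p_i,u_i)$ is \emph{exactly} what rules out both the pole coincidences needed for (i) and the zero coincidences needed so that the residue extractions in (iii) have nonvanishing coefficients, but you have correctly identified that this is the content of the genericity assumption.
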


\begin{rem}
To see the irreducibility, one checks that the action of
commuting series $\psi^\pm_i(z)$ is diagonalizable and has a simple
joint spectrum (here we use $q_1,q_2,q_3$ being \emph{generic}, see~(\ref{generic})).
\end{rem}

\medskip
\noindent
$\bullet$ \emph{Macmahon modules $M^{(p)}(u,K)$.}

 For $K\in \CC^\times$, set $\phi^K(t):=\frac{K^{-1}t-K}{t-1}$.
We call $K$ \emph{generic} if $K\notin q^\ZZ d^\ZZ$.
For such $K$, the unique irreducible lowest weight $^{'}\ddot{U}_{q,d}(\ssl_n)$-module of the
lowest weight $(1;\phi^K(z/u)^{\delta_{i,p}})_{i\in [n]}$ is called the Macmahon module, denoted by $M^{(p)}(u,K)$.
They were first studied in~\cite{FJMM1}.
Recall that a collection of partitions $\bar{\lambda}=\{\lambda^{(r)}\}_{r\in \ZZ_{>0}}$ is called a \emph{plane partition} if
  $$\lambda^{(r)}_l\geq \lambda^{(r+1)}_l\ \mathrm{for\ all}\ r,l\in \ZZ_{>0}\ \  \mathrm{and}\ \ \lambda^{(r)}=\emptyset \ \mathrm{for}\ r\gg 0.$$

\begin{prop}\cite[Theorem 4.3]{FJMM1}
 For a generic $K$, the vector space $M^{(p)}(u,K)$ has a basis $\{|\bar{\lambda}\rangle\}$
(labeled by all plane partitions) with $|\bar{\emptyset}\rangle$ being its lowest weight vector.
\end{prop}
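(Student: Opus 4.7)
The plan is to realize $M^{(p)}(u,K)$ explicitly as a renormalized semi-infinite tensor product of Fock modules, following the strategy of~\cite{FJMM1}. This construction will simultaneously exhibit a basis indexed by plane partitions and identify $|\bar{\emptyset}\rangle$ as the lowest weight vector.

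First, I would fix a sequence of parameters $\{(p_j,u_j)\}_{j\geq 1}$ with $p_j\equiv p-j+1\,(\mathrm{mod}\ n)$ and $u_j=u\cdot q_3^{j-1}\cdot \kappa^{j-1}$, where $\kappa$ is a scaling factor depending on $K$, chosen so that the formal product of lowest weights of the Fock modules $F^{(p_j)}(u_j)$ reproduces $\phi^K(z/u)^{\delta_{i,p}}$. For every $N$ the collection $\{(p_j,u_j,\bar{1})\}_{j=1}^N$ is generic in the sense of Lemma~\ref{tensor}, so the finite tensor product
$$F^{(p_1)}(u_1)\otimes F^{(p_2)}(u_2)\otimes \cdots \otimes F^{(p_N)}(u_N)$$
carries an irreducible lowest weight $^{'}\ddot{U}_{q,d}(\ssl_n)$-module structure via the iterated coproduct \eqref{coproduct}, with basis of pure tensors $|\lambda^{(1)}\rangle\otimes \cdots \otimes |\lambda^{(N)}\rangle$ indexed by $N$-tuples of partitions.

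Next, I would define the candidate module as the span of stable semi-infinite tensors $|\bar{\lambda}\rangle=|\lambda^{(1)}\rangle\otimes |\lambda^{(2)}\rangle\otimes \cdots$ with only finitely many $\lambda^{(j)}$ nonempty. The key observation is that, after inspecting the explicit matrix coefficients in Proposition~\ref{Fock module} combined with the coproduct, a configuration $(\lambda^{(j)})$ survives the action of the generators if and only if the plane-partition interlacing $\lambda^{(j)}_l\geq \lambda^{(j+1)}_l$ holds: at a violation, one of the infinite products of $\phi$-factors acquires an uncompensated pole at a resonant spectral parameter, whereas when interlacing holds all poles and zeros cancel telescopically. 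This identifies the admissible basis with plane partitions. The weight action on $|\bar{\emptyset}\rangle$ is the formal product
$$\prod_{j\geq 1}\phi(z/u_j)^{\delta_{i,p_j}},$$
which, after telescoping over the arithmetic progression of those $j$ with $p_j\equiv p\,(\mathrm{mod}\ n)$, collapses to $\phi^K(z/u)^{\delta_{i,p}}$ for the appropriate choice of $\kappa$.

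By uniqueness of the irreducible lowest weight module with prescribed lowest weight, the module just constructed must be isomorphic to $M^{(p)}(u,K)$, with $|\bar{\emptyset}\rangle$ as its lowest weight vector and plane partitions indexing the basis. The main technical obstacle is controlling the semi-infinite limit: one must verify that each generator $e_i(z), f_i(z), \psi^\pm_i(z)$ acts on any stable vector through only finitely many tensor factors, and that the infinite products of matrix coefficients entering $\psi^\pm_i(z)|\bar{\lambda}\rangle$ converge to the expected rational eigenvalues. The genericity hypothesis $K\notin q^{\ZZ}d^{\ZZ}$ is used exactly at this step, to exclude accidental resonances that would otherwise collapse the basis or force additional relations among the $|\bar{\lambda}\rangle$.
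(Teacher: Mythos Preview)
The paper itself gives no proof of this proposition: it is quoted verbatim from \cite[Theorem 4.3]{FJMM1} and the very next sentence says the explicit matrix coefficients will not be needed. So there is nothing in the paper to compare your argument against; you are effectively trying to reconstruct the proof from~\cite{FJMM1}. Your overall strategy---realize $M^{(p)}(u,K)$ as a renormalized semi-infinite tensor of Fock modules---is indeed the one used there, but several of your concrete choices are wrong and would derail the argument.

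First, the parameters. From the proof of Theorem~\ref{main3} in this paper (which reverse-engineers the same combinatorics), the box $(a,b)\in\lambda^{(j)}$ carries the spectral value $q_1^a q_2^{\,j-1} q_3^b u$. Hence the $j$-th layer is a copy of $F^{(p)}(q_2^{\,j-1}u)$: the color origin stays $p_j=p$ for every $j$, and the spectral shift between layers is by $q_2$, not by $q_3$ times an unspecified $\kappa$. Your choice $p_j\equiv p-j+1$ would shift the coloring of each layer and does not reproduce the Macmahon combinatorics.

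Second, and more seriously, the collection $\{(p,q_2^{\,j-1}u)\}_{j=1}^N$ is \emph{not} generic in the sense of Lemma~\ref{tensor}: taking $a=b=i'-i-1$ already violates the condition. This non-genericity is the whole point. At generic parameters the tensor product would be irreducible with basis all $N$-tuples of partitions; it is precisely the resonance $u_{j+1}=q_2 u_j$ that makes the tensor product reducible and forces the interlacing $\lambda^{(j)}_l\ge\lambda^{(j+1)}_l$ on the irreducible quotient. So you cannot invoke Lemma~\ref{tensor}, and the mechanism producing the plane-partition constraint is not ``an uncompensated pole in the $\phi$-factors'' but rather that the span of non-interlacing configurations is a submodule to be quotiented out.

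Finally, the infinite product $\prod_{j\ge 1}\phi(z/q_2^{\,j-1}u)$ does not literally telescope to $\phi^K(z/u)$; one must renormalize the Cartan action (this is the ``renormalized'' in your first sentence, but you never actually carry it out), and $K$ enters as a free parameter of that renormalization rather than being determined by a scaling $\kappa$ of the $u_j$.
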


In this paper, we will not need explicit formulas for the $^{'}\ddot{U}_{q,d}(\ssl_n)$-action in the basis $\{|\bar{\lambda}\rangle\}$.

                                 %%%%%%%%%%%%%% Saito's Vertex Representations %%%%%%%%%%%%%%

\subsection{Vertex representations}
$\ $

 In this section, we recall a family of vertex $\ddot{U}^{'}_{q,d}(\ssl_n)$-representations from~\cite{S}
generalizing the construction of~\cite{FJ} for quantum affine algebras.
Let $S_n$ be the \emph{generalized} Heisenberg algebra generated by $\{H_{i,k}|i\in [n],k\in \ZZ\backslash\{0\}\}$ and
a central element $H_0$ with the defining relations
  $$[H_{i,k}, H_{j,l}]=d^{-km_{i,j}}\frac{[k]_q\cdot [ka_{i,j}]_q}{k}\delta_{k,-l}\cdot H_0.$$
Let $S_{n}^+$ be the subalgebra of $S_n$ generated by $\{H_{i,k}|i\in[n],k>0\}\sqcup \{H_0\}$,
and let $\CC v_0$ be the $S_n^+$-representation with $H_{i,k}$ acting trivially and $H_0$ acting via the identity operator.
The induced representation $F_n:=\mathrm{Ind}_{S_{n}^+}^{S_n}  \CC v_0$ is called the \emph{Fock representation} of $S_n$.

 We denote by $\{\bar{\alpha}_i\}_{i=1}^{n-1}$ the simple roots of $\ssl_n$,
by $\{\bar{\Lambda}_i\}_{i=1}^{n-1}$ the fundamental weights of $\ssl_n$,
by $\{\bar{h}_i\}_{i=1}^{n-1}$ the simple coroots of $\ssl_n$.
Let $\bar{Q}:=\bigoplus_{i=1}^{n-1} \ZZ\bar{\alpha}_i$ be the root lattice of $\ssl_n$,
$\bar{P}:=\bigoplus_{i=1}^{n-1} \ZZ\bar{\Lambda}_i=\bigoplus_{i=2}^{n-1} \ZZ{\bar{\alpha}_i}\oplus \ZZ\bar{\Lambda}_{n-1}$
be the weight lattice of $\ssl_n$.
We also set
  $$\bar{\alpha}_0:=-\sum_{i=1}^{n-1}\bar{\alpha}_i\in \bar{Q},\
    \bar{\Lambda}_0:=0\in \bar{P},\
    \bar{h}_0:=-\sum_{i=1}^{n-1}\bar{h}_i.$$

 Let $\CC\{\bar{P}\}$ be the $\CC$-algebra generated by
$e^{\bar{\alpha}_2},\ldots,e^{\bar{\alpha}_{n-1}},e^{\bar{\Lambda}_{n-1}}$
with the defining relations:
  $$e^{\bar{\alpha}_i}\cdot e^{\bar{\alpha}_j}=(-1)^{\langle \bar{h}_i,\bar{\alpha}_j \rangle}e^{\bar{\alpha}_j}\cdot e^{\bar{\alpha}_i},\
    e^{\bar{\alpha}_i}\cdot e^{\bar{\Lambda}_{n-1}}=(-1)^{\delta_{i,n-1}}e^{\bar{\Lambda}_{n-1}}\cdot e^{\bar{\alpha}_i}.$$
For $\alpha=\sum_{i=2}^{n-1} m_i\bar{\alpha}_i+m_n\bar{\Lambda}_{n-1}$, we define $e^{\bar{\alpha}}\in \CC\{\bar{P}\}$ via
 $$e^{\bar{\alpha}}:=(e^{\bar{\alpha}_2})^{m_2}\cdots (e^{\bar{\alpha}_{n-1}})^{m_{n-1}}(e^{\bar{\Lambda}_{n-1}})^{m_n}.$$
Let $\CC\{\bar{Q}\}$ be the subalgebra of $\CC\{\bar{P}\}$ generated by $\{e^{\bar{\alpha}_i}\}_{i=1}^{n-1}$.

For every $0\leq p\leq n-1$, define the space
  $$W(p)_n:=F_n\otimes \CC\{\bar{Q}\}e^{\bar{\Lambda}_p}.$$
Consider the operators $H_{i,l}, e^{\bar{\alpha}}, \partial_{\bar{\alpha}_i}, z^{H_{i,0}}, \mathrm{d}$ acting on $W(p)_n$,
which assign to every element
  $$v\otimes e^{\bar{\beta}}=(H_{i_1,-k_1}\cdots H_{i_N,-k_N} v_0)\otimes e^{\sum_{j=1}^{n-1}m_j\bar{\alpha}_j+\bar{\Lambda}_p}\in W(p)_n$$
the following values:
\begin{equation*}
  H_{i,l}(v\otimes e^{\bar{\beta}}):=(H_{i,l}v)\otimes e^{\bar{\beta}},\
  e^{\bar{\alpha}}(v\otimes e^{\bar{\beta}}):=v\otimes e^{\bar{\alpha}}e^{\bar{\beta}},\
  \partial_{\bar{\alpha}_i}(v\otimes e^{\bar{\beta}}):=\langle \bar{h}_i,\bar{\beta} \rangle v\otimes e^{\bar{\beta}},
\end{equation*}
\begin{equation*}
  z^{H_{i,0}}(v\otimes e^{\bar{\beta}}):=
  z^{\langle \bar{h}_i,\bar{\beta} \rangle} d^{\frac{1}{2}\sum_{j=1}^{n-1}\langle \bar{h}_i,m_j\bar{\alpha}_j \rangle m_{i,j}} v\otimes e^{\bar{\beta}},
\end{equation*}
\begin{equation*}
  \mathrm{d}(v\otimes e^{\bar{\beta}}):=(-\sum k_i+((\bar{\Lambda}_p,\bar{\Lambda}_p)-(\bar{\beta},\bar{\beta}))/2) v\otimes e^{\bar{\beta}}.
\end{equation*}

The following result provides a natural structure of an $\ddot{U}^{'}_{q,d}(\ssl_n)$-module on $W(p)_n$.

\begin{prop}\cite[Proposition 3.2.2]{S}\label{Saito representation}
 For any $\bar{c}=(c_0,\ldots, c_{n-1})\in (\CC^\times)^{[n]},\ u\in \CC^\times$, and $0\leq p\leq n-1$,
the following formulas define an action of $\ddot{U}^{'}_{q,d}(\ssl_n)$ on $W(p)_n$:
\begin{equation*}
  \rho^p_{u,\bar{c}}(e_i(z))=c_i \exp\left( \sum_{k>0} \frac{q^{-k/2}u^{-k}}{[k]_q}H_{i,-k}z^k\right)
  \exp\left( -\sum_{k>0}\frac{q^{-k/2}u^k}{[k]_q}H_{i,k}z^{-k} \right) e^{\bar{\alpha}_i}\left(\frac{z}{u}\right)^{1+H_{i,0}},
\end{equation*}
\begin{equation*}
  \rho^p_{u,\bar{c}}(f_i(z))=\frac{(-1)^{n\delta_{i,0}}}{c_i} \exp\left( -\sum_{k>0} \frac{q^{k/2}u^{-k}}{[k]_q}H_{i,-k}z^k\right)
  \exp\left( \sum_{k>0}\frac{q^{k/2}u^k}{[k]_q}H_{i,k}z^{-k} \right) e^{-\bar{\alpha}_i}\left(\frac{z}{u}\right)^{1-H_{i,0}},
\end{equation*}
\begin{equation*}
  \rho^p_{u,\bar{c}}(\psi_i^{\pm}(z))=\exp\left( \pm(q-q^{-1})\sum_{k>0} H_{i,\pm k}(z/u)^{\mp k} \right) q^{\pm \partial_{\bar{\alpha}_i}},\
  \rho^p_{u,\bar{c}}(\gamma^{1/2})=q^{1/2},\ \rho^p_{u,\bar{c}}(q^{d_1})=q^{\mathrm{d}}.
\end{equation*}
  $W(p)_n$ is an irreducible $\ddot{U}^{'}_{q,d}(\ssl_n)$-module if $q,qd,qd^{-1}$ are not roots of unity.
\end{prop}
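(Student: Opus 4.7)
The plan is to verify directly each of the defining relations (T0.1)--(T7.2) for the proposed vertex operators, treating the construction as a toroidal analogue of the classical Frenkel--Jing realization. The series $e_i(z)$, $f_i(z)$, $\psi^\pm_i(z)$ all factor as products of (i) an exponential in the negative Heisenberg modes $H_{i,-k}$, (ii) an exponential in the positive Heisenberg modes $H_{i,k}$, (iii) a translation operator $e^{\pm\bar{\alpha}_i}$, and (iv) a zero-mode factor $(z/u)^{\pm 1\pm H_{i,0}}$. Consequently each relation splits into three independent checks: a scalar Heisenberg computation via BCH/normal-ordering, a $\CC\{\bar{P}\}$-cocycle sign computation, and a zero-mode weight count.

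First I would dispose of the easy relations. Centrality of $\gamma^{\pm 1/2}$ and the commutation of $\psi^\pm_i(z)$'s (T0.1) are immediate because both act diagonally on $W(p)_n$; the $q^{d_1}$-conjugation rule (T0.3), restricted to the $'$-quotient that ignores $q^{d_2}$, follows from the definition of $\mathrm{d}$ and the fact that $e^{\bar{\alpha}_i}$ and the Heisenberg modes shift $\mathrm{d}$ by the predictable amount. The relations (T5$'$), (T6$'$) for $h_{i,k}$ with $k\ne 0$ reduce to the Heisenberg bracket $[H_{i,k},H_{j,l}]=d^{-km_{i,j}}\frac{[k]_q[ka_{i,j}]_q}{k}\delta_{k,-l}$ combined with the balancing factor $q^{\mp k/2}$ in the vertex operator, while the zero-mode part $\psi_{i,0}e_{j,l}=q^{a_{i,j}}e_{j,l}\psi_{i,0}$ reads off the commutation of $q^{\partial_{\bar{\alpha}_i}}$ with $e^{\bar{\alpha}_j}$.

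The central vertex operator computations are (T1), (T2), (T3), and (T4). Normal-ordering $e_i(z)e_j(w)$ by BCH produces a scalar prefactor $\exp\bigl(\sum_{k>0}\frac{q^{-k}[ka_{i,j}]_q}{k[k]_q}d^{-km_{i,j}}(w/z)^k\bigr)$, which one recognizes as the rational function $g_{a_{i,j}}(d^{m_{i,j}}z/w)$ expanded in $w/z$; combined with the cocycle identity $e^{\bar{\alpha}_i}e^{\bar{\alpha}_j}=(-1)^{\langle \bar{h}_i,\bar{\alpha}_j\rangle}e^{\bar{\alpha}_j}e^{\bar{\alpha}_i}$ and with the zero-mode interchange of $(z/u)^{1+H_{i,0}}$ past $e^{\bar{\alpha}_j}$, this yields (T2). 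Relation (T3) is symmetric, and (T1) follows from the same scalar computation with no cocycle contribution. For (T4), the $i\ne j$ case vanishes because $[H_{i,k},H_{j,-k}]=0$ and $e^{\bar{\alpha}_i}e^{-\bar{\alpha}_j}=e^{-\bar{\alpha}_j}e^{\bar{\alpha}_i}$ when $a_{i,j}=0$; for $i=j$, one normal-orders $e_i(z)f_i(w)-f_i(w)e_i(z)$, identifies the difference of the two expansions of the resulting rational function with $\delta(\gamma^{\pm 1}z/w)$ via $\frac{1}{1-w/z}-\frac{1}{1-z/w}=\delta(z/w)$, and matches the residual exponential prefactor against $\psi^\pm_i(\gamma^{1/2}\cdot)$ using $\rho^p_{u,\bar{c}}(\gamma^{\pm 1/2})=q^{\pm 1/2}$.

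The main obstacle is the Serre relations (T7.1) and (T7.2). The decoupling $[e_i(z),e_j(w)]=0$ for $j\neq i,i\pm 1$ is automatic since $a_{i,j}=m_{i,j}=0$ kills both the Heisenberg prefactor and the cocycle sign. The genuine Serre identity requires extracting the rational scalar prefactor of the triple normal-ordered product $e_i(z_1)e_i(z_2)e_{i\pm 1}(w)$, and then checking that the $\mathfrak{S}_2$-symmetrization weighted by the brackets $[\,\cdot\,,\,\cdot\,]_q$ and $[\,\cdot\,,\,\cdot\,]_{q^{-1}}$ produces a rational function whose symmetric sum vanishes identically; combinatorially this is the same cancellation as in the classical Frenkel--Jing proof for $U_q(\widehat{\ssl}_n)$, twisted by the spectral parameter $d^{m_{i,j}}$. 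The signs $(-1)^{n\delta_{i,0}}$ in $\rho^p_{u,\bar{c}}(f_i(z))$ are forced precisely so that the $f$-Serre identity matches the $e$-Serre identity after using $(e^{\bar{\alpha}_i})^{-1}=(-1)^{n\delta_{i,0}}e^{-\bar{\alpha}_i}$, and verifying this compatibility on the $i=0$ boundary case --- where $\bar{\alpha}_0$ is expressed through the chosen generators $e^{\bar{\alpha}_2},\ldots,e^{\bar{\alpha}_{n-1}},e^{\bar{\Lambda}_{n-1}}$ of $\CC\{\bar{P}\}$ --- is the most delicate bookkeeping step and the reason the specific cocycle convention on $\CC\{\bar{P}\}$ was set up as it is.
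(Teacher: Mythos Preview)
The paper does not supply its own proof of this proposition: it is quoted verbatim from Saito's paper~\cite[Proposition~3.2.2]{S}, and the only commentary added is the parenthetical remark explaining the sign $(-1)^{n\delta_{i,0}}$. So there is nothing in the present paper to compare your argument against; your direct verification of the relations (T0.1)--(T7.2) via Heisenberg normal-ordering, cocycle signs in $\CC\{\bar P\}$, and zero-mode bookkeeping is exactly the standard Frenkel--Jing style computation that Saito carries out in the cited reference.

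One small gap worth flagging: in your treatment of (T4) for $i\ne j$ you only dispose of the case $a_{i,j}=0$. When $j=i\pm 1$ the Heisenberg commutator $[H_{i,k},H_{j,-k}]$ is \emph{not} zero, and the lattice operators do \emph{not} commute. The commutator $[e_i(z),f_j(w)]$ still vanishes, but for a different reason: the normal-ordering scalar is now a polynomial in $w/z$ (respectively $z/w$) rather than a genuine rational function with a pole, so the two expansions of the normal-ordered product agree as formal series and the difference is identically zero once the cocycle sign and the zero-mode shift are absorbed. You should say this explicitly, as it is the same mechanism that later makes the $j\ne i,i\pm 1$ piece of the Serre relation trivial.
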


\begin{rem}
(a) The irreducibility of $\rho^p_{u,\bar{c}}$ follows from the irreducibility of
the $S_n$-module $F_n$ and level one vertex $U_q(\widehat{\ssl}_n)$-modules of~\cite{FJ},
established at~\cite{CJ}.

\noindent
(b) The factor $(-1)^{n\delta_{i,0}}$ in $\rho^p_{u,\bar{c}}(f_i(z))$
(missing in~\cite{S,FT1}) is due to $(e^{\bar{\alpha}_i})^{-1}=(-1)^{n\delta_{i,0}}e^{-\bar{\alpha}_i}$.
\end{rem}
                                 %%%%%%%%%%%%%% Shuffle algebra %%%%%%%%%%%%%%

\subsection{Shuffle algebra}
$\ $

 Consider an $\NN^{[n]}$-graded $\CC$-vector space
  $$\sS=\underset{\overline{k}=(k_0,\ldots,k_{n-1})\in \NN^{[n]}}\bigoplus\sS_{\overline{k}},$$
where $\sS_{(k_0,\ldots,k_{n-1})}$ consists of $\prod \mathfrak{S}_{k_i}$-symmetric rational functions in the variables
$\{x_{i,r}\}_{i\in [n]}^{1\leq r\leq k_i}$.
We also fix an $n\times n$ matrix of rational functions
$\Omega=(\omega_{i,j}(z))_{i,j\in [n]} \in \mathrm{Mat}_{n\times n}(\CC(z))$
by setting
  $$\omega_{i,i}(z)=\frac{z-q^{-2}}{z-1},\ \omega_{i,i+1}(z)=\frac{d^{-1}z-q}{z-1},\
    \omega_{i,i-1}(z)=\frac{z-qd^{-1}}{z-1},\ \mathrm{and}\ \omega_{i,j}(z)=1\ \mathrm{otherwise}.$$
Let us now introduce the bilinear $\star$ product on $\sS$: given  $F\in \sS_{\overline{k}}, G\in \sS_{\overline{l}}$,
define $F\star G\in \sS_{\overline{k}+\overline{l}}$ by
  $$(F\star G)(x_{0,1},\ldots,x_{0,k_0+l_0};\ldots;x_{n-1,1},\ldots, x_{n-1,k_{n-1}+l_{n-1}}):=$$
  $$\Sym_{\prod\mathfrak{S}_{k_i+l_i}}
    \left(F(\{x_{i,r}\}_{i\in [n]}^{1\leq r\leq k_i})
    G(\{x_{i',r'}\}_{i'\in [n]}^{k_{i'}<r'\leq k_{i'}+l_{i'}})\cdot
    \prod_{i\in [n]}^{i'\in [n]}\prod_{r\leq k_i}^{r'>k_{i'}}\omega_{i,i'}(x_{i,r}/x_{i',r'})\right).$$
Here and afterwards, given a function $f\in \CC(\{x_{i,1},\ldots,x_{i,m_i}\}_{i\in [n]})$, we define
  $$\Sym_{\prod\mathfrak{S}_{m_i}}(f):=\prod_{i\in [n]}\frac{1}{m_i!}\cdot
    \sum_{(\sigma_0,\ldots,\sigma_{n-1})\in \mathfrak{S}_{m_0}\times \ldots\times \mathfrak{S}_{m_{n-1}}}
    f(\{x_{i,\sigma_i(1)},\ldots,x_{i,\sigma_i(m_i)}\}_{i\in [n]}).$$

\medskip
 This endows $\sS$ with a structure of an associative unital algebra with the unit $\textbf{1}\in \sS_{(0,\ldots,0)}$.
We will be interested only in a certain subspace of $\sS$, defined by the \emph{pole} and \emph{wheel conditions}:

\medskip
\noindent
$\bullet$ We say that $F\in \sS_{\overline{k}}$ satisfies the \emph{pole conditions} if and only if
  $$F=\frac{f(x_{0,1},\ldots,x_{n-1,k_{n-1}})}{\prod_{i\in [n]}\prod_{r\leq k_i}^{r'\leq k_{i+1}}(x_{i,r}-x_{i+1,r'})},\
    \mathrm{where}\ f\in (\CC[x_{i,r}^{\pm 1}]_{i\in [n]}^{1\leq r\leq k_i})^{\prod \mathfrak{S}_{k_i}}.$$

\noindent
$\bullet$ We say that $F\in \sS_{\overline{k}}$ satisfies the \emph{wheel conditions} if and only if
  $$F(\{x_{i,r}\})=0\ \mathrm{once}\ x_{i,r_1}/x_{i+\epsilon,l}=qd^{\epsilon}\ \mathrm{and}\ x_{i+\epsilon,l}/x_{i,r_2}=qd^{-\epsilon}\
    \mathrm{for\ some}\ \epsilon, i, r_1, r_2, l,$$
where $\epsilon\in \{\pm 1\}, i\in [n], 1\leq r_1,r_2\leq k_i, 1\leq l\leq k_{i+\epsilon}$
and we use the cyclic notation
$x_{n,l}:=x_{0,l}, k_n:=k_0, x_{-1,l}:=x_{n-1,l}, k_{-1}:=k_{n-1}$ as before.

\medskip
\noindent Let $S_{\overline{k}}\subset \sS_{\overline{k}}$ be the subspace of all elements $F$ satisfying the above two conditions and set
  $$S:=\underset{\overline{k}\in \NN^{[n]}}\bigoplus S_{\overline{k}}.$$
Further $S_{\overline{k}}=\oplus_{r\in \ZZ}S_{\overline{k},r}$ with
$S_{\overline{k},r}:=\{F\in S_{\overline{k}}|\mathrm{tot.deg}(F)=r\}$.
The following is straightforward.

\begin{lem}
 The subspace $S\subset\sS$ is $\star$-closed.
\end{lem}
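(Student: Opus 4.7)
The plan is to verify the two defining conditions of $S$ — pole and wheel — for $F\star G$ whenever $F\in S_{\overline{k}}$ and $G\in S_{\overline{l}}$. Both checks are made on the pre-symmetrization integrand
\[
 \Phi_{F,G}:=F(\{x_{i,j}\}_{j\le k_i})\cdot G(\{x_{i,j}\}_{j>k_i})\cdot \!\!\prod_{i,i'\in[n]}\prod_{j\le k_i,\,j'>k_{i'}}\!\!\omega_{i,i'}(x_{i,j}/x_{i',j'}),
\]
after which the $\prod \mathfrak{S}_{k_i+l_i}$-symmetrization is applied.

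For the pole condition, I would first catalogue which denominator factors appear in $\Phi_{F,G}$. Those from $F$ and $G$ are already of the desired form $(x_{i,j}-x_{i+1,j'})$, while $\omega_{i,i+1}$ and $\omega_{i+1,i}$ provide exactly the remaining "cross" factors so that every pair of adjacent-color variables across the $F/G$-partition is accounted for. The only extra, unwanted denominators are $(x_{i,j}-x_{i,j'})$ coming from $\omega_{i,i}(x_{i,j}/x_{i,j'})$ with $j\le k_i<j'$. I would cancel these by pairing, inside the symmetrization, each permutation $\sigma$ with the permutation $\sigma\circ(j\leftrightarrow j')$ that swaps the two offending color-$i$ variables: at the locus $x_{i,j}=x_{i,j'}=z$, both $F$ and $G$ take the same value (since they are symmetric in their own variables, and the remaining $\omega_{\cdot,\cdot'}$-factors are unchanged), while the two instances of $\omega_{i,i}$ contribute residues of opposite signs, giving cancellation of the simple pole. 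This leaves only poles of the allowed form, so $F\star G$ satisfies the pole condition.

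For the wheel condition, specialize three variables to a wheel: $x_{i,j_1}/x_{i+\epsilon,l}=qd^{\epsilon}$ and $x_{i+\epsilon,l}/x_{i,j_2}=qd^{-\epsilon}$, forcing also $x_{i,j_1}/x_{i,j_2}=q^2$. In each term of the symmetrization the three variables $(x_{i,j_1},x_{i+\epsilon,l},x_{i,j_2})$ are distributed among the "$F$-side" and the "$G$-side"; there are $2^3=8$ distributions. I would check them case by case. The two pure distributions (all three on the $F$-side or all three on the $G$-side) vanish because $F$, respectively $G$, satisfies the wheel condition. For each of the six mixed distributions, I would exhibit one $\omega_{i,i'}$-factor in the cross product whose argument lies at a zero of the corresponding numerator:
\[
\omega_{i,i\pm 1}(qd^{\pm 1})=0,\qquad \omega_{i\pm 1,i}(qd^{\mp 1})=0,\qquad \omega_{i,i}(q^{-2})=0,
\]
so that the whole term vanishes. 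A short table (distinguishing "$x_{i,j_1}$ and $x_{i+\epsilon,l}$ on opposite sides", "$x_{i+\epsilon,l}$ and $x_{i,j_2}$ on opposite sides", and "$x_{i,j_1}$, $x_{i,j_2}$ on opposite sides while $x_{i+\epsilon,l}$ is on the same side as one of them") covers all six mixed cases using exactly these three identities. Summing over $\sigma$ then gives zero, so $F\star G$ satisfies the wheel condition.

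The bookkeeping in Step 2 is the only place where care is required: one must verify for each of the six mixed distributions that the $\omega$-factor which vanishes does occur in the product $\prod_{j\le k_i,\,j'>k_{i'}}\omega_{i,i'}(x_{i,j}/x_{i',j'})$ with the correct ordering of $F$-side versus $G$-side arguments. Once that ordering check is done, everything else is routine, so I expect this numerator-matching to be the main (though elementary) obstacle.
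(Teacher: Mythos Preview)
Your argument is correct and complete. The paper itself gives no proof of this lemma (it is stated as ``straightforward'' with no further comment), so there is nothing to compare against; what you have written is exactly the routine verification one would supply, and your case analysis for the wheel condition checks out against the explicit zeros $\omega_{i,i\pm 1}(qd^{\pm 1})=\omega_{i\pm 1,i}(qd^{\mp 1})=\omega_{i,i}(q^{-2})=0$.

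One small wording issue in Step~1: the pairing that cancels the simple pole at $x_{i,a}=x_{i,b}$ is $\sigma\leftrightarrow (a\ b)\circ\sigma$ (postcompose with the transposition of the two \emph{output} positions $a,b$), not $\sigma\circ(j\ j')$ on the source side; with the correct pairing, your claim that ``$F$, $G$, and the remaining $\omega$-factors agree at the locus while the two singular $\omega_{i,i}$-factors contribute opposite residues'' goes through exactly as you describe. This is cosmetic and does not affect the validity of the argument.
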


 Now we are ready to introduce one of the key actors of this paper:

\begin{defn}
 The algebra $(S,\star)$ is called the shuffle algebra (of $A_{n-1}^{(1)}$-type).
\end{defn}

\medskip

 Recall the subalgebra $\ddot{U}^+$ of $\ddot{U}_{q,d}(\ssl_n)$ from Section 1.2.
By standard results\footnote{\ See~\cite[Theorem 4.2.1]{Jan} for the case of finite quantum groups,~\cite[Theorem 3.2]{H} for the case
of quantum affine algebras, and~\cite[Proposition 1.3]{T} for the case of quantum toroidal of $\gl_1$.}, $\ddot{U}^+$
is generated by $\{e_{i,k}\}_{i\in [n]}^{k\in \ZZ}$ with the defining relations (T2, T7.1).
We equip the algebra $\ddot{U}^+$ with the $\NN^{[n]}\times \ZZ$--grading by assigning $\deg(e_{i,k})=(1_i;k)$,
where $1_i\in \NN^{[n]}$ is the vector with the $i$-th coordinate $1$ and all other coordinates being zero.

 The following result is straightforward:

\begin{prop}\label{homomorphisms}
 There exists a unique algebra homomorphism $\Psi\colon \ddot{U}^+\to \sS$ such that
$\Psi(e_{i,k})=x_{i,1}^k$ for any $i\in [n],k\in \ZZ$. In particular, $\mathrm{Im}(\Psi)\subset S$.
\end{prop}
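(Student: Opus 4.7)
Uniqueness of $\Psi$ is immediate, since $\ddot{U}^+$ is generated as an algebra by $\{e_{i,k}\}_{i\in[n], k\in\ZZ}$ and hence any algebra homomorphism out of $\ddot{U}^+$ is determined by its values on these generators. For existence, I would define $\tilde{\Psi}$ on the free algebra on symbols $\{\tilde e_{i,k}\}$ by $\tilde e_{i,k}\mapsto x_{i,1}^k\in\sS_{1_i}$ and extend multiplicatively via the $\star$-product. Since $\ddot{U}^+$ is presented by its generators modulo the relations (T2) and (T7.1), it suffices to verify that the images of these two families of relations vanish in $\sS$.

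For the quadratic relation (T2), after applying $\tilde\Psi$ to both sides and computing the resulting shuffle products in the graded piece $\sS_{1_i+1_j}$ (reading off the generating series in $z,w$), the claim reduces to the functional identity
$$\omega_{i,j}(z/w)=g_{a_{i,j}}(d^{m_{i,j}}z/w)\cdot \omega_{j,i}(w/z).$$
This is a short case-by-case check using the explicit formulas for $\omega_{i,j}$ and $g_m(t)=(q^mt-1)/(t-q^m)$: for $i=j$ and for $j=i\pm 1$ a direct algebraic manipulation verifies it, while for $|i-j|\geq 2$ every relevant factor equals $1$ and the relation collapses to commutativity of the shuffle product in disjoint coordinates.

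The quantum Serre relation (T7.1) is the main obstacle. Applying $\tilde\Psi$ to $\mathrm{Sym}_{\mathfrak{S}_2}[e_i(z_1),[e_i(z_2),e_{i\pm 1}(w)]_q]_{q^{-1}}$ lands in $\sS_{2\cdot 1_i+1_{i\pm 1}}$ and expands into four shuffle products, each built from monomials in $x_{i,1},x_{i,2},x_{i\pm 1,1}$ times products of $\omega_{i,i\pm 1}$- and $\omega_{i\pm 1,i}$-factors at various argument pairs, with scalar prefactors coming from the nested $q$-brackets. After combining these four terms into a single rational expression and performing the $\mathfrak{S}_2$-symmetrization in $x_{i,1}\leftrightarrow x_{i,2}$, the resulting function vanishes identically. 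This is a direct but somewhat technical symbolic identity, and constitutes the heart of the proof. The remaining part of (T7.1), namely $[e_i(z),e_j(w)]=0$ for $j\neq i, i\pm 1$, is immediate from $\omega_{i,j}=\omega_{j,i}=1$ in this range, so the two relevant shuffle products coincide.
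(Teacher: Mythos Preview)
Your proposal is correct and is precisely the standard verification that the paper alludes to when calling the result ``straightforward'' without supplying a proof. The paper gives no argument here, so there is nothing further to compare; your reduction of (T2) to the identity $\omega_{i,j}(z/w)=g_{a_{i,j}}(d^{m_{i,j}}z/w)\,\omega_{j,i}(w/z)$ and your handling of (T7.1) as a direct symmetrization identity are exactly what is expected.
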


 The following beautiful result was recently proved by A.~Negut:

\begin{thm}\cite[Theorem 1.1]{N}\label{Negut theorem}
 The homomorphism $\Psi\colon \ddot{U}^+\to S$ is an isomorphism of $\NN^{[n]}\times \ZZ$-graded algebras.
\end{thm}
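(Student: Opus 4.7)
The plan is to establish the theorem in three steps: verifying that $\mathrm{Im}(\Psi) \subset S$, proving injectivity of $\Psi$, and proving surjectivity of $\Psi$.

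For the first step, I would check that $\Psi(e_{i,k})=x_{i,1}^k$ lies trivially in $S_{1_i,k}$, and then verify that the shuffle product preserves both the pole and the wheel conditions. The pole condition follows from the explicit form of the factors $\omega_{i,i\pm 1}(z) = \frac{z-q^{\pm 1}d^{\mp 1}}{z-1}$: they provide exactly the allowed denominators $(x_{i,j}-x_{i+1,j'})^{-1}$, while the diagonal $\omega_{i,i}(z)=(z-q^{-2})/(z-1)$ produces no extra simple poles after $\mathfrak{S}_{k_i}$-symmetrization because the would-be pole is already matched by a zero of the symmetrized numerator. The wheel condition is a direct translation of the Serre relations (T7.1): at the wheel specialization $x_{i,j_1}/x_{i+\epsilon,l}=qd^{\epsilon}$ and $x_{i+\epsilon,l}/x_{i,j_2}=qd^{-\epsilon}$, the numerator factors of two of the $\omega$-weights vanish simultaneously, and this vanishing is exactly what the cubic Serre relation enforces after applying the $\mathrm{Sym}_{\mathfrak{S}_2}$ with the $q^{\pm 1}$-commutators.

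For injectivity, I would invoke the Drinfeld double construction of Theorem~\ref{Drinfeld double sln}. Under the genericity assumption on $q,d$, the Hopf pairing $\varphi$ is nondegenerate, and its restriction to $\ddot{U}^+\times \ddot{U}^-$ remains nondegenerate because the pairing decouples along the triangular decomposition: the Cartan part contributes only through the $g_{a_{i,j}}$ factors, which do not cancel pairings between $\ddot{U}^+$ and $\ddot{U}^-$. Transporting the pairing to the shuffle side via $\Psi$ realizes it as a contour-integral pairing on $S\times S$, and nondegeneracy forces $\ker\Psi=0$. Alternatively, one can produce a Beck--Damiani-style PBW basis of $\ddot{U}^+$ indexed by pairs (positive affine real/imaginary root, integer shift), and verify directly that the corresponding shuffle elements are linearly independent in $S$.

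The surjectivity step is the main obstacle. The strategy is to use specialization maps: for each multi-index $\overline{k}$ and each admissible ``path'' arrangement of the variables, one specializes along a wheel-free degeneration and shows that (i) the specializations separate elements of $S_{\overline{k}}$ modulo elements supported in lower pieces of an appropriate filtration, and (ii) every possible specialization value is attained by an iterated shuffle product of the generators $\Psi(e_{i,k})$. An induction on the total degree and on $\overline{k}$ (with respect to a lexicographic order) then shows that $\mathrm{Im}(\Psi)$ exhausts $S$. Equivalently, one compares $\mathbb{Z}_+^{[n]}\times\mathbb{Z}$-graded characters: the character of $\ddot{U}^+$ is computed from its PBW basis, and the character of $S$ is computed from the description of $S_{\overline{k}}$ as symmetric Laurent polynomials vanishing on the wheel locus; matching them forces equality. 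The delicate point is to control how wheel vanishing propagates through repeated specialization and to construct enough explicit ``root vectors'' in $S$ as shuffle monomials to saturate the graded dimension count.
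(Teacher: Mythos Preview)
The paper does not prove this theorem; it simply cites Negut's paper~\cite[Theorem 1.1]{N} for the result. There is therefore no ``paper's own proof'' to compare your proposal against.

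That said, your outline is broadly in the spirit of how such shuffle-algebra isomorphisms are established in the literature, including Negut's argument: the containment $\mathrm{Im}(\Psi)\subset S$ is exactly what the paper records as a consequence of Proposition~\ref{homomorphisms}; injectivity via nondegeneracy of the Hopf pairing (Theorem~\ref{Drinfeld double sln}(d)) is the standard route; and surjectivity via specialization maps, a filtration argument, and a graded-dimension count is indeed the core of Negut's method. Your description of the surjectivity step is honest about where the real work lies---controlling wheel vanishing under iterated specialization and producing enough explicit shuffle elements---but it remains a sketch rather than a proof. If you want to turn this into a self-contained argument you would need to carry out those constructions in detail, which is nontrivial and is precisely the content of~\cite{N}.
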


                                 %%%%%%%%%%%%%% Shuffle bimodules %%%%%%%%%%%%%%

\subsection{Shuffle bimodules}
$\ $

 Following the ideas of~\cite{FJMM2}, we introduce three families of $S$-bimodules.

\noindent
$\bullet$ \emph{Shuffle modules $S_{1,p}(u)$.}

 For $u\in \CC^\times$ and $0\leq p\leq n-1$, consider an $\NN^{[n]}$-graded $\CC$-vector space
  $$S_{1,p}(u)=\underset{\overline{k}=(k_0,\ldots,k_{n-1})\in \NN^{[n]}}\bigoplus S_{1,p}(u)_{\overline{k}},$$
where the degree $\overline{k}$ component $S_{1,p}(u)_{\overline{k}}$ consists of $\prod \mathfrak{S}_{k_i}$-symmetric rational
functions $F$ in the variables $\{x_{i,r}\}_{i\in [n]}^{1\leq r\leq k_i}$
satisfying the following three conditions:

\noindent
(i) \emph{Pole conditions}, that is,
  $$F=\frac{f(x_{0,1},\ldots,x_{n-1,k_{n-1}})}{\prod_{i\in [n]}\prod_{r\leq k_i}^{r'\leq k_{i+1}}(x_{i,r}-x_{i+1,r'})\cdot \prod_{r=1}^{k_p} (x_{p,r}-u)},\
    \mathrm{where}\ f\in (\CC[x_{i,r}^{\pm 1}]_{i\in [n]}^{1\leq r\leq k_i})^{\prod \mathfrak{S}_{k_i}}.$$

\noindent
(ii) \emph{First kind wheel conditions}, that is,
  $$F(\{x_{i,r}\})=0\ \mathrm{once}\
    x_{i,r_1}/x_{i+\epsilon,l}=qd^{\epsilon}\ \mathrm{and}\ x_{i+\epsilon,l}/x_{i,r_2}=qd^{-\epsilon}\
    \mathrm{for\ some}\ \epsilon, i, r_1, r_2, l,$$
where $\epsilon\in \{\pm 1\}, i\in [n], 1\leq r_1,r_2\leq k_i, 1\leq l\leq k_{i+\epsilon}$ and we use the cyclic notation.

\noindent
(iii) \emph{Second kind wheel conditions}, that is,
  $$f(\{x_{i,r}\})=0\ \mathrm{once}\ x_{p,r_1}=u\ \mathrm{and}\ x_{p,r_2}=q^2u\ \mathrm{for\ some}\ 1\leq r_1,r_2\leq k_p,$$
where $f(\{x_{i,r}\}):=\prod_{r=1}^{k_p} (x_{p,r}-u)\cdot F(\{x_{i,r}\})$.

\medskip
Fix $\bar{c}\in (\CC^\times)^{[n]}$.
Given $F\in S_{\overline{k}}$ and $G\in S_{1,p}(u)_{\overline{l}}$, we define
$F\star G, G\star F\in S_{1,p}(u)_{\overline{k}+\overline{l}}$ by
\begin{multline}\label{left}
  (F\star G)(x_{0,1},\ldots,x_{0,k_0+l_0};\ldots;x_{n-1,1},\ldots,x_{n-1,k_{n-1}+l_{n-1}}):=\prod_{i\in [n]} c_i^{k_i}\times\\
  \Sym_{\prod\mathfrak{S}_{k_i+l_i}}\left(F(\{x_{i,r}\}_{i\in [n]}^{r\leq k_i})
  G(\{x_{i',r'}\}_{i'\in [n]}^{r'> k_{i'}})
  \prod_{i\in [n]}^{i'\in [n]}\prod_{r\leq k_i}^{r'>k_{i'}}\omega_{i,i'}(x_{i,r}/x_{i',r'}) \prod_{r=1}^{k_p} \phi(x_{p,r}/u)\right)
\end{multline}
and
\begin{multline}\label{right}
  (G\star F)(x_{0,1},\ldots,x_{0,k_0+l_0};\ldots;x_{n-1,1},\ldots,x_{n-1,k_{n-1}+l_{n-1}}):=\\
  \Sym_{\prod\mathfrak{S}_{k_i+l_i}}\left(G(\{x_{i,r}\}_{i\in [n]}^{r\leq l_i})
  F(\{x_{i',r'}\}_{i'\in [n]}^{r'>l_{i'}})
  \prod_{i\in [n]}^{i'\in [n]}\prod_{r\leq l_i}^{r'>l_{i'}}\omega_{i,i'}(x_{i,r}/x_{i',r'})\right).
\end{multline}
These formulas endow $S_{1,p}(u)$ with a structure of an $S$-bimodule.

Identifying $S$ with $^{'}\ddot{U}^+\simeq \ddot{U}^+$ via $\Psi$ (see Theorem~\ref{Negut theorem}),
we get two commuting $^{'}\ddot{U}^+$-actions on $S_{1,p}(u)$.
Our next result extends one of these to an action of the entire algebra $^{'}\ddot{U}_{q,d}(\ssl_n)$.

\begin{prop}\label{extension to toroidal action}
The following formulas define an action of $^{'}\ddot{U}_{q,d}(\ssl_n)$ on $S_{1,p}(u)$:
  $$\pi^p_{u,\overline{c}}(q^{d_2})G=q^{-\frac{p(n-p)}{2}+|\overline{k}|}\cdot G,\
    \pi^p_{u,\overline{c}}(e_{i,k})G=x_i^k\star G,$$
  $$\pi^p_{u,\overline{c}}(h_{i,0})G=(2k_i-k_{i-1}-k_{i+1}-\delta_{i,p})\cdot G,$$
  $$\pi^p_{u,\overline{c}}(h_{i,l})G=
    \left(\frac{1}{l}\sum_{i'\in [n]}\sum_{r'=1}^{k_{i'}} [la_{i,i'}]_qd^{-lm_{i,i'}} x_{i',r'}^l-\delta_{i,p}\frac{[l]_q}{l}q^lu^l\right)\cdot G\
    \mathrm{for}\ l\ne 0,$$
  $$\pi^p_{u,\overline{c}}(f_{i,k})G=
    \frac{k_ic_i^{-1}}{q^{-1}-q}\left(\underset{z=0}{\mathrm{Res}}+\underset{z=\infty}{\mathrm{Res}}\right)
    \frac{z^kG(\{x_{i',r'}\}_{\mid x_{i,k_i}\mapsto z})}{\prod_{i'}\prod_{r'=1}^{k_{i'}-\delta_{i,i'}}\omega_{i',i}(\frac{x_{i',r'}}{z})}\frac{dz}{z}.$$
Here $k\in \ZZ, \bar{c}=(c_0,\ldots,c_{n-1})\in (\CC^\times)^{[n]},\ G\in S_{1,p}(u)_{\overline{k}}$ and $|\overline{k}|:=\sum_{i\in [n]} k_i$.
\end{prop}

\begin{rem}\label{action of cartan current}
 Formulas of Proposition~\ref{extension to toroidal action} can be equivalently written in the following form
\begin{equation}\label{e-action}
  \pi^p_{u,\overline{c}}(q^{d_2})G=q^{-\frac{p(n-p)}{2}+|\overline{k}|}\cdot G,\
  \pi^p_{u,\overline{c}}(e_i(z))G=\delta\left(\frac{x_i}{z}\right)\star G,
\end{equation}
\begin{equation}\label{psi-action}
  \pi^p_{u,\overline{c}}(\psi^{\pm}_i(z))G=
  \left(\prod_{r=1}^{k_i} \frac{q^2z-x_{i,r}}{z-q^2x_{i,r}}\cdot \prod_{r=1}^{k_{i+1}}\frac{z-qdx_{i+1,r}}{qz-dx_{i+1,r}}
  \cdot \prod_{r=1}^{k_{i-1}}\frac{dz-qx_{i-1,r}}{qdz-x_{i-1,r}}\cdot \phi(z/u)^{\delta_{i,p}}\right)^\pm\cdot G,
\end{equation}
\begin{equation}\label{f-action}
  \pi^p_{u,\overline{c}}(f_i(z))G=\frac{k_ic_i^{-1}}{q^{-1}-q}\cdot
  \left\{\left(\frac{G(\{x_{i',r'}\}_{\mid x_{i,k_i}\mapsto z})}{\prod_{i'}\prod_{r'=1}^{k_{i'}-\delta_{i,i'}}\omega_{i',i}(\frac{x_{i',r'}}{z})}\right)^+-
  \left(\frac{G(\{x_{i',r'}\}_{\mid x_{i,k_i}\mapsto z})}{\prod_{i'}\prod_{r'=1}^{k_{i'}-\delta_{i,i'}}\omega_{i',i}(\frac{x_{i',r'}}{z})}\right)^-\right\},
\end{equation}
where $g(z)^\pm$ denotes the expansion of a rational function $g(z)$ in $z^{\mp 1}$, respectively.
\end{rem}

\begin{proof}[Proof of Proposition~\ref{extension to toroidal action}]
$\ $

 We need to check the compatibility of the given assignment $\pi^p_{u,\overline{c}}$
with the defining relations~(\ref{T0.1}--\ref{T7.2}). The only nontrivial of those are
(\ref{T3},\ref{T4},\ref{T6},\ref{T7.2}). To check~(\ref{T3},\ref{T6}), we use formulas~(\ref{psi-action},\ref{f-action})
together with an obvious identity $\frac{\omega_{i,j}(z/w)}{\omega_{j,i}(w/z)}=g_{a_{i,j}}(d^{m_{i,j}}z/w)$ for any $i,j\in [n]$.
The verification of~(\ref{T7.2}) boils down to the identity
  $$\underset{z_1,z_2}\Sym
    \left(\frac{\omega_{i,i}(z_1/z_2)^{-1}}{\omega_{i,i\pm 1}(z_1/w)\omega_{i,i\pm 1}(z_2/w)}-\frac{(q+q^{-1})\omega_{i,i}(z_1/z_2)^{-1}}{\omega_{i,i\pm 1}(z_1/w)\omega_{i\pm 1,i}(w/z_2)}+
    \frac{\omega_{i,i}(z_1/z_2)^{-1}}{\omega_{i\pm 1,i}(w/z_1)\omega_{i\pm 1,i}(w/z_2)}\right)=0.$$
Finally, the verification of (T4) is based on the observation that the $k_i+1-\delta_{i,j}$ different summands
from the symmetrization appearing in $\pi^p_{u,\overline{c}}(e_i(z))\pi^p_{u,\overline{c}}(f_j(w))G$
cancel the $k_i+1-\delta_{i,j}$ terms (out of $k_i+1$) from the symmetrization appearing in
$\pi^p_{u,\overline{c}}(f_j(w))\pi^p_{u,\overline{c}}(e_i(z))G$.
\end{proof}

\noindent
 $\bullet$ \emph{Shuffle modules $S(\underline{u})$.}

The above construction admits a ``higher rank'' generalization. For any $\overline{r}\in \NN^{[n]}$, consider
  $$\underline{u}=(u_{0,1},\ldots, u_{0,l_0};\ldots; u_{n-1,1}, \ldots, u_{n-1,l_{n-1}}) \ \mathrm{with}\ u_{i,s}\in \CC^\times.$$
Define $S(\underline{u})=\oplus_{\overline{k}\in \NN^{[n]}} S(\underline{u})_{\overline{k}}$ completely analogously to
$S_{1,p}(u)$ with the following modifications:

\noindent
(i$'$) \emph{Pole conditions} for a degree $\overline{k}$ function $F$ should read as follows:
  $$F=\frac{f(x_{0,1},\ldots,x_{n-1,k_{n-1}})}{\prod_{i\in [n]}\prod_{r\leq k_i}^{r'\leq k_{i+1}}(x_{i,r}-x_{i+1,r'})\cdot
    \prod_{i\in [n]}\prod_{s=1}^{l_i}\prod_{r=1}^{k_i} (x_{i,r}-u_{i,s})},\
    f\in (\CC[x_{i,r}^{\pm 1}]_{i\in [n]}^{1\leq r\leq k_i})^{\prod \mathfrak{S}_{k_i}}.$$

\noindent
(iii$'$) \emph{Second kind wheel conditions} for such $F$ should read as follows:
  $$f(\{x_{i,r}\})=0\ \mathrm{once}\ x_{i,r_1}=u_{i,s}\ \mathrm{and}\ x_{i,r_2}=q^2u_{i,s}\ \mathrm{for\ some}\
    i\in [n], 1\leq s\leq l_i, 1\leq r_1,r_2\leq k_i,$$
where $f(\{x_{i,r}\}):=\prod_{i\in [n]}\prod_{s=1}^{l_i}\prod_{r=1}^{k_i} (x_{i,r}-u_{i,s})\cdot F(\{x_{i,r}\})$.

\noindent
Let us endow $S(\underline{u})$ with an $S$-bimodule structure by applying formulas~(\ref{left}) and~(\ref{right}) with
  $$\prod_{r=1}^{k_p} \phi(x_{p,r}/u)\rightsquigarrow \prod_{i\in [n]}\prod_{s=1}^{l_i}\prod_{r=1}^{k_i}\phi(x_{i,r}/u_{i,s}).$$
The resulting left $^{'}\ddot{U}^{+}$-action on $S(\underline{u})$ can be extended to the $^{'}\ddot{U}_{q,d}(\ssl_n)$-action,
denoted $\pi_{\underline{u},\overline{c}}$.
The latter is defined by the formulas~(\ref{e-action}--\ref{f-action}) with the following two modifications:
  $$\phi(z/u)^{\delta_{i,p}}\rightsquigarrow \prod_{s=1}^{l_i} \phi(z/u_{i,s}),\
    q^{-\frac{p(n-p)}{2}}\rightsquigarrow q^{-\sum_{p=0}^{n-1} l_p\cdot\frac{p(n-p)}{2}}.$$
Let $\textbf{1}_{\underline{u}}$ denote the element $1\in S(\underline{u})_{(0,\ldots,0)}$.
The following is obvious:

\begin{lem}\label{faithful}
For $X\in\ ^{'}\ddot{U}^+\cdot {^{'}\ddot{U}^0}$, we have
$\pi_{\underline{u},\overline{c}}(X)\textbf{1}_{\underline{u}}=0$ for all $\underline{u},\overline{c}$
if and only if $X=0$.
\end{lem}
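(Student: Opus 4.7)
The ``only if'' direction is immediate. For the converse, assume $\pi_{\underline{u},\overline{c}}(X)\mathbf{1}_{\underline{u}} = 0$ for all $\underline{u}, \overline{c}$. The plan is to reduce the problem to two ingredients: Negut's isomorphism $\Psi: {^{'}\ddot{U}^+} \iso S$ (Theorem~\ref{Negut theorem}) and a separation-of-characters statement for the commutative subalgebra ${^{'}\ddot{U}^0}$.

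Using the triangular decomposition ${^{'}\ddot{U}^{\geq}} \simeq {^{'}\ddot{U}^+} \otimes {^{'}\ddot{U}^0}$, I would write $X = \sum_\alpha Y_\alpha Z_\alpha$ with the $Y_\alpha \in {^{'}\ddot{U}^+}$ homogeneous of $\ZZ^{[n]}_+$-degree $\overline{k}_\alpha$ and linearly independent within each fixed degree, and $Z_\alpha \in {^{'}\ddot{U}^0}$. Commutativity of ${^{'}\ddot{U}^0}$ follows from \eqref{T0.1} and from \eqref{T1} upon setting $\gamma = 1$, so $\mathbf{1}_{\underline{u}}$ is a joint eigenvector for ${^{'}\ddot{U}^0}$: there is an algebra character $\chi_{\underline{u}}: {^{'}\ddot{U}^0} \to \CC$, independent of $\overline{c}$, such that $\pi_{\underline{u},\overline{c}}(Z)\mathbf{1}_{\underline{u}} = \chi_{\underline{u}}(Z)\mathbf{1}_{\underline{u}}$.

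Next, by induction on $|\overline{k}|$ based on $\pi(e_i(z))G = \delta(x_i/z) \star G$ together with the specialization of~\eqref{left} to the case in which the right factor is $\mathbf{1}_{\underline{u}}$ (so that the symmetrization is trivial and the $\omega$-product empty), I would verify
\[ \pi_{\underline{u},\overline{c}}(Y)\mathbf{1}_{\underline{u}} \;=\; \prod_{i \in [n]} c_i^{k_i}\cdot \Psi(Y)(\{x_{i,j}\}) \cdot \prod_{i \in [n]}\prod_{s=1}^{r_i}\prod_{j=1}^{k_i} \phi(x_{i,j}/u_{i,s}) \]
for every $Y \in {^{'}\ddot{U}^+}_{\overline{k}}$. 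Combining this with the previous paragraph,
\[ \pi_{\underline{u},\overline{c}}(X)\mathbf{1}_{\underline{u}} \;=\; \sum_\alpha \chi_{\underline{u}}(Z_\alpha)\prod_{i} c_i^{k_{\alpha,i}}\,\Psi(Y_\alpha)\cdot \prod_{i,s,j} \phi(x_{i,j}/u_{i,s}). \]
Since pieces of distinct $\overline{k}_\alpha$ lie in different graded components of $S(\underline{u})$ and the $\phi$-product is nonzero, the vanishing hypothesis forces, in each fixed degree $\overline{k}$ and for every $\underline{u},\overline{r}$, the identity $\sum_{\alpha:\overline{k}_\alpha = \overline{k}} \chi_{\underline{u}}(Z_\alpha)\Psi(Y_\alpha) = 0$ in $S_{\overline{k}}$. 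By Theorem~\ref{Negut theorem} the $\Psi(Y_\alpha)$ are linearly independent in $S$, so $\chi_{\underline{u}}(Z_\alpha) \equiv 0$ as a function of $(\underline{u},\overline{r})$ for every $\alpha$.

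The remaining step, which I expect to be the technical heart of the argument, is to show that the family $\{\chi_{\underline{u}}\}_{\underline{u},\overline{r}}$ separates points of ${^{'}\ddot{U}^0}$. Extending formula~\eqref{h1 action} from $S_{1,p}(u)$ to $S(\underline{u})$ evaluates $\chi_{\underline{u}}(h_{i,l})$ as a nonzero scalar multiple of the $l$-th power sum of $\{u_{i,s}\}_{s=1}^{r_i}$, while $\psi_{i,0}^{\pm 1}$ and $q^{\pm d_2}$ act on $\mathbf{1}_{\underline{u}}$ by explicit $q$-monomials depending only on $\overline{r}$. Since ${^{'}\ddot{U}^0}$ is commutative and is a Laurent-polynomial-type algebra in the generators $\psi_{i,0}^{\pm 1},q^{\pm d_2},\{h_{i,l}\}_{l\neq 0}$, and since power sums in arbitrarily many indeterminates are algebraically independent (accessed by letting the entries of $\overline{r}$ grow), every nonzero element of ${^{'}\ddot{U}^0}$ is detected by some $\chi_{\underline{u}}$. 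Hence $Z_\alpha = 0$ for every $\alpha$, and therefore $X = 0$.
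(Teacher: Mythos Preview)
The paper gives no proof of this lemma at all; it simply says ``The following is obvious:'' and states the claim. So there is nothing to compare against, and your write-up goes well beyond what the paper supplies. Your overall architecture is exactly right: reduce to a separation statement for $\chi_{\underline{u}}$ on ${^{'}\ddot{U}^0}$ using the triangular decomposition and Theorem~\ref{Negut theorem}. The computation of $\pi_{\underline{u},\overline{c}}(Y)\mathbf{1}_{\underline{u}}$ and the power-sum argument for the $h_{i,l}$'s are both fine.

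There is, however, a genuine gap in your ``remaining step,'' and it is not repairable in the stated generality. On $\mathbf{1}_{\underline{u}}$ one has $\chi_{\underline{u}}(\psi_{i,0})=q^{-r_i}$ and $\chi_{\underline{u}}(q^{d_2})=q^{-\sum_{p}r_p\,p(n-p)/2}$, both functions of $\overline{r}$ alone. Hence $q^{d_2}$ is \emph{not} separated from the corresponding monomial in the $\psi_{i,0}$'s: for instance when $n=3$ the element
\[
Z \;=\; q^{d_2}-\psi_{1,0}\psi_{2,0}\ \in\ {^{'}\ddot{U}^0}
\]
satisfies $\chi_{\underline{u}}(Z)=q^{-r_1-r_2}-q^{-r_1-r_2}=0$ for every $\underline{u}$, yet $Z\neq 0$ (indeed $q^{d_2}$ and $\psi_{1,0}\psi_{2,0}$ act differently on $G\in S(\underline{u})_{\overline{k}}$ whenever $k_0\neq 0$). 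For general $n$ the same happens with $(q^{d_2})^2$ versus $\prod_i \psi_{i,0}^{\,i(n-i)}$. So the family $\{\chi_{\underline{u}}\}$ does \emph{not} separate points of ${^{'}\ddot{U}^0}$, and your final paragraph does not go through as written.

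This means either the lemma is intended with $q^{\pm d_2}$ excluded from ${^{'}\ddot{U}^0}$ (in which case your argument is complete: the $\psi_{i,0}$'s are separated because $q$ is not a root of unity and a Laurent polynomial vanishing on $(q^{-\ZZ_{\geq 0}})^n$ is zero, while the $h_{i,l}$'s are handled by your power-sum Jacobian), or the lemma as literally stated is false. Given that the paper provides no argument and does not cite the lemma elsewhere, the most charitable reading is the former; you should flag this restriction explicitly rather than assert full separation on ${^{'}\ddot{U}^0}$.
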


\noindent
$\bullet$ \emph{Shuffle modules $S_{1,p}^K(u)$ and $S^{\underline{K}}(\underline{u})$.}

 Another generalization of $S_{1,p}(u)$ is provided by the $S$-bimodules $S_{1,p}^K(u)$.
As a vector space, $S_{1,p}^K(u)$ is defined similarly to
$S_{1,p}(u)$ but without imposing the second kind wheel conditions.
The $S$-bimodule structure on $S_{1,p}^K(u)$ is defined by the formulas~(\ref{left}) and~(\ref{right}) with the only change
  $$\phi(t)\rightsquigarrow\phi^K(t):=\frac{K^{-1}\cdot t-K}{t-1}.$$
The resulting left $^{'}\ddot{U}^+$-action can be extended to the
$^{'}\ddot{U}_{q,d}(\ssl_n)$-action on $S_{1,p}^K(u)$, denoted $\pi^{p,K}_{u,\overline{c}}$,
defined by the formulas~(\ref{e-action}--\ref{f-action}) with the only change $\phi\rightsquigarrow \phi^K$.

 It is clear how to define the ``higher rank'' generalization $S^{\underline{K}}(\underline{u})$,
equip it with an $S$-bimodule structure, and extend the resulting left $^{'}\ddot{U}^+$-action to the
$^{'}\ddot{U}_{q,d}(\ssl_n)$-action $\pi^{\underline{K}}_{\underline{u},\overline{c}}$ on $S^{\underline{K}}(\underline{u})$.

       %%%%%%%%%%%%%%%%%%%%%%%%%%%%%%%%%%%%%%%%%%%%%%%%%%%%%%%%%%%%%%%%%%%%%%%%%%%%%%%%%%%%%%%%%%%%%%%%%%%%%%%%%%%%%%%%%%%%%%%%%%%
       %%%%%%%%%%%%%%%%%%%%%%%%%%%%%%%%%%%%%%%%%% SECTION 2 %%%%%%%%%%%%%%%%%%%%%%%%%%%%%%%%%%%%%%%%%%%%%%%%%%%
       %%%%%%%%%%%%%%%%%%%%%%%%%%%%%%%%%%%%%%%%%%%%%%%%%%%%%%%%%%%%%%%%%%%%%%%%%%%%%%%%%%%%%%%%%%%%%%%%%%%%%%%%%%%%%%%%%%%%%%%%%%%

\section{Identification of representations}

 In this section, we establish relations between representations
$\tau^p_{u,\bar{c}},\ \pi^p_{u,\bar{c}},\ \rho^p_{u,\bar{c}}$.
As before, we assume $q_1,q_2,q_3$ are generic in the sense of~(\ref{generic}).

       %%%%%%%%%%%%%%%%%%%%%%%%%%%%%%%%%%%%%%%%%% Fock vs shuffle representation %%%%%%%%%%%%%%%%%%%%%%%%%%%%%%%%%%%%%%%%%%%%%%%%%%

\subsection{Isomorphism $\bar{\pi}^p_{u,\bar{c}}\simeq \tau^p_{u,\bar{c}}$}
 $\ $

 Fix $0\leq p\leq n-1, u\in \CC^\times, \bar{c}\in (\CC^\times)^{[n]}$.
Recall the action $\pi^p_{u,\bar{c}}$ of $^{'}\ddot{U}_{q,d}(\ssl_n)$ on $S_{1,p}(u)$
from Proposition~\ref{extension to toroidal action}. Define
  $$S':=\underset{\overline{k}\ne (0,\ldots,0)}\bigoplus S_{\overline{k}}\subset S.$$
Consider a $\CC$-vector subspace
  $$V_0:=S_{1,p}(u)\star S'=\mathrm{span}_\CC\{G\star F |\ G\in S_{1,p}(u), F\in S'\}\subset S_{1,p}(u).$$
The following result is straightforward and its proof is left to the interested reader:
\begin{lem}\label{preserves}
  The subspace $V_0$ of $S_{1,p}(u)$ is invariant under the action $\pi^p_{u,\bar{c}}$ of $^{'}\ddot{U}_{q,d}(\ssl_n)$.
\end{lem}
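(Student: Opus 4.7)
The plan is to verify, for each generator $X$ of ${^{'}\ddot{U}}_{q,d}(\ssl_n)$, that $\pi^p_{u,\bar c}(X)$ preserves $V_0$. By Proposition~\ref{extension to toroidal action}, it suffices to check this on $\gamma^{\pm 1/2}$, $q^{\pm d_2}$, $e_i(z)$, $\psi^\pm_i(z)$, and $f_i(z)$.

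The easy cases dispose of themselves. The scalars $\gamma^{\pm 1/2}, q^{\pm d_2}$ act as multiplication by a scalar on each $\ZZ_+^{[n]}$-graded piece of $S_{1,p}(u)$, and $V_0$ is a graded subspace. The action $\pi^p_{u,\bar c}(e_{i,k})$ is left $\star$-multiplication by $x_{i,1}^k \in S$, which commutes with right $\star$-multiplication by any $F \in S'$ by the $S$-bimodule associativity immanent in formulas~(\ref{left})--(\ref{right}); hence $e_{i,k}(G\star F) = (e_{i,k}\cdot G)\star F \in V_0$.

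For $\psi^\pm_i(z)$, the action is multiplication by a symmetric rational function $A_{\bar k}(z;\vec x)$ of all the variables. For $G \in S_{1,p}(u)_{\bar l}$ and $F \in S'_{\bar m}$ with $\bar k = \bar l+\bar m$, this factor splits multiplicatively as $A_{\bar k}(z;\vec x) = A_{\bar l}(z;\vec x^G)\cdot B_{\bar m}(z;\vec x^F)$, where $B_{\bar m}$ is the analogous product but missing the $\phi(z/u)^{\delta_{i,p}}$ factor. Since multiplication by a symmetric function commutes with symmetrization, one obtains $\psi^\pm_i(z)(G\star F) = (\psi^\pm_i(z) G)\star \bigl(B_{\bar m}(z;\cdot) F\bigr)$. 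Expanding in $z^{\mp 1}$, each Fourier coefficient of $B_{\bar m}(z;\cdot) F$ equals $F$ times a symmetric Laurent polynomial in the $\vec x^F$-variables, which lies in $S_{\bar m} \subset S'$ because $\bar m \neq 0$.

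The principal difficulty is the $f_i(z)$-case. Substituting $H = G\star F$ into the residue-type formula for $\pi^p_{u,\bar c}(f_i(z))$, one separates the symmetrization defining $G\star F$ according to whether the specialized variable $x_{i,k_i}\mapsto z$ originates from the $G$-bag or the $F$-bag of the shuffle product. The $G$-bag contribution, after absorbing the $\omega$-factors involving $z$, rearranges into $(f_i(z) G)\star F$, which is manifestly in $V_0$. The $F$-bag contribution, after a careful bookkeeping of $\omega$-ratios and an application of the pole and wheel conditions on $F$, reorganizes into a sum of terms of the form $G_s\star F_s$ with $F_s \in S'$. This step is the computational core of the lemma, and is of the same flavor as the verification of relation~(\ref{T4}) within the proof of Proposition~\ref{extension to toroidal action}.
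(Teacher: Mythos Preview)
The paper itself gives no proof of this lemma; it is declared ``straightforward and \dots left to the interested reader.'' Your generator-by-generator strategy is exactly the natural one, and your treatment of $\gamma^{\pm 1/2}$, $q^{\pm d_2}$, $e_i(z)$, and $\psi^\pm_i(z)$ (equivalently $h_{i,l}$ via~($\dagger$)) is correct.

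The $f_i(z)$ case, however, is not quite as you describe. When the specialized variable $z$ comes from the $G$-bag, the cross $\omega$-factors between $z$ and the $F$-variables do \emph{not} disappear: the $G\star F$ formula~(\ref{right}) contributes $\prod_{b\in F}\omega_{i,c(b)}(z/x_b)$, while the denominator in the $f_i$-formula contributes $\prod_{b\in F}\omega_{c(b),i}(x_b/z)$, and their ratio is precisely the $\psi_i$-eigenvalue factor restricted to the $F$-variables. Thus the $G$-bag is not $(f_i(z)G)\star F$ but rather, before taking $(\cdot)^+-(\cdot)^-$, it is $(\text{pre-}f_i(z)G)\star\bigl(\psi_i\text{-factor}\cdot F\bigr)$. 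Because this $\psi$-factor has its own poles in $z$ (at $z=q^2x_{i,j'}$, etc.), the $(\cdot)^+-(\cdot)^-$ does not factor as $\bigl((\text{pre-}f_iG)^+-(\text{pre-}f_iG)^-\bigr)$ times the rest; one must split it as $(AB)^+-(AB)^-=(A^+-A^-)B^+ + A^-(B^+-B^-)$ and check both pieces land in $V_0$. The $F$-bag, by contrast, does reorganize cleanly into $G\star\bigl(f_i^{S}(z)F\bigr)$, where $f_i^{S}$ is the analogous operator on $S=S(\emptyset)$; this lies in $V_0$ provided $f_i^{S}(z)F\in S'$, which holds when $\bar m\neq 1_i$ and is zero when $\bar m=1_i$ (since then $F$ is a Laurent polynomial in one variable and $(\cdot)^+-(\cdot)^-$ vanishes). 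Your invocation of ``pole and wheel conditions on $F$'' is not what is actually needed here.

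In short: the architecture of your argument is right and the lemma is true, but the assertion ``$G$-bag $=(f_i(z)G)\star F$'' is false as written, and the honest bookkeeping of the extra $\psi$-type factor (and its interaction with the $(\cdot)^+-(\cdot)^-$) is exactly the nontrivial content the paper is sweeping under ``straightforward.''
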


%%%%%%%%%%%%%%%%%%%%%%%%%%%%%%%%%%%%%%%%%%%%%%%%%%%%%%%%%%%%%%%%%%%%%%%%%%%%%%%%%%%%%%%%%%%%%%%%%%%%%%%%%%%%%%%%%%%%%%%%%%%%%%%%%%%%%
%%%%%%%%%%%%%%%%%%%%%%%%%%%%%%%%%%%%%%%%% Sketch of the proof %%%%%%%%%%%%%%%%%%%%%%%%%%%%%%%%%%%%%%%%%%%%%%%%%%%%%%%%%%%%%%%%%%%%%%%
%%%%%%%%%%%%%%%%%%%%%%%%%%%%%%%%%%%%%%%%%%%%%%%%%%%%%%%%%%%%%%%%%%%%%%%%%%%%%%%%%%%%%%%%%%%%%%%%%%%%%%%%%%%%%%%%%%%%%%%%%%%%%%%%%%%%%
%The invariance w.r.t. the left ${^{'}\ddot{U}}^+$-action is clear as it commutes with the right $S$-action.
%To check the invariance w.r.t. the left $^{'}\ddot{U}^0$-action, we note that the latter is generated by the
%operators $h^\perp_{j,r}$ which (up to a constant) act as a multiplication by $\sum x_{j,\bullet}^r$. The latter operators
%obviously satisfy $h^\perp_{j,r}(F\star G)=(h^\perp_{j,r}F)\star G+F\star(G\cdot \sum x_{j,\bullet}^r)$.
%Finally, to check the invariance w.r.t. to the left ${^{'}\ddot{U}}^-$-action, it is convenient to use the formula~(\ref{f-action})
%which allows to split the action on $F\star G$ into the sum of two terms $F'\star G$ and $F\star G'$, implying the result.
%%%%%%%%%%%%%%%%%%%%%%%%%%%%%%%%%%%%%%%%%%%%%%%%%%%%%%%%%%%%%%%%%%%%%%%%%%%%%%%%%%%%%%%%%%%%%%%%%%%%%%%%%%%%%%%%%%%%%%%%%%%%%%%%%%%%%
%%%%%%%%%%%%%%%%%%%%%%%%%%%%%%%%%%%%%%%%%%%%%%%%%%%%%%%%%%%%%%%%%%%%%%%%%%%%%%%%%%%%%%%%%%%%%%%%%%%%%%%%%%%%%%%%%%%%%%%%%%%%%%%%%%%%%
%%%%%%%%%%%%%%%%%%%%%%%%%%%%%%%%%%%%%%%%%%%%%%%%%%%%%%%%%%%%%%%%%%%%%%%%%%%%%%%%%%%%%%%%%%%%%%%%%%%%%%%%%%%%%%%%%%%%%%%%%%%%%%%%%%%%%

Let $\bar{\pi}^p_{u,\bar{c}}$ denote the corresponding $^{'}\ddot{U}_{q,d}(\ssl_n)$-action
on the factor space $\bar{S}_{1,p}(u):=S_{1,p}(u)/V_0.$

\begin{thm}\label{main1}
  We have an isomorphism of $^{'}\ddot{U}_{q,d}(\ssl_n)$-modules $\bar{\pi}^p_{u,\bar{c}}\simeq \tau^p_{u,\bar{c}}$.
\end{thm}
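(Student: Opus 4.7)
The plan is to realize $\bar{S}_{1,p}(u)$ as a lowest weight $^{'}\ddot{U}_{q,d}(\ssl_n)$-module with the same lowest weight as $\tau^p_{u,\bar{c}}$ and the same character, from which the isomorphism follows by uniqueness of irreducible lowest weight modules combined with a dimension match in each graded component.

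First, I verify that the image $\bar{\textbf{1}}_u\in\bar{S}_{1,p}(u)$ of the unit $\textbf{1}_u\in S_{1,p}(u)_{(0,\ldots,0)}$ is a lowest weight vector. Reading off Proposition~\ref{extension to toroidal action} in degree $(0,\ldots,0)$ immediately gives $\pi^p_{u,\bar{c}}(f_i(z))\textbf{1}_u=0$ (the prefactor $k_i$ vanishes), $\pi^p_{u,\bar{c}}(\psi_i^\pm(z))\textbf{1}_u$ equal to the appropriate expansion of $\phi(z/u)^{\delta_{i,p}}\cdot\textbf{1}_u$ (the remaining products run over empty index sets), and $\pi^p_{u,\bar{c}}(q^{d_2})\textbf{1}_u=q^{-p(n-p)/2}\cdot\textbf{1}_u$. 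Combined with Proposition~\ref{Fock module} and the definition of $\chi_{p,\bar{c}}$, this matches the lowest weight data of $\tau^p_{u,\bar{c}}$ exactly.

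Second, I show that $\bar{S}_{1,p}(u)$ is generated by $\bar{\textbf{1}}_u$ as a $^{'}\ddot{U}_{q,d}(\ssl_n)$-module. Since $f_i(z)$ annihilates $\bar{\textbf{1}}_u$, this reduces to cyclicity under $^{'}\ddot{U}^+\simeq S$ via the left shuffle action. The direct calculation from~(\ref{left}) gives $F\star\textbf{1}_u=\prod_{i\in[n]}c_i^{k_i}\cdot q^{-k_p}\cdot F\cdot\prod_{j=1}^{k_p}\frac{x_{p,j}-q^2u}{x_{p,j}-u}$, which makes manifest how the left shuffle action encodes the residues at each $x_{p,j}=u$. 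An inductive argument on $|\bar{k}|$, using that $V_0$ contains $\textbf{1}_u\star S'$ (identified via~(\ref{right}) with the natural embedding $S_{\bar{k}}\hookrightarrow S_{1,p}(u)_{\bar{k}}$) together with the right $S$-action on lower-degree representatives, lets one rewrite any $G\in S_{1,p}(u)_{\bar{k}}$ modulo $V_0$ in the form $F\star\textbf{1}_u$ for some $F\in S_{\bar{k}}$. Combined with the first step and Negut's Theorem~\ref{Negut theorem}, this produces a surjective homomorphism $\Theta:\bar{S}_{1,p}(u)\twoheadrightarrow\tau^p_{u,\bar{c}}$ with $\bar{\textbf{1}}_u\mapsto|\emptyset\rangle$.

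For injectivity of $\Theta$, I would compare graded dimensions. By Proposition~\ref{Fock module}, $\dim(\tau^p_{u,\bar{c}})_{\bar{k}}$ equals the number $D_{\bar{k}}$ of partitions whose mod-$n$ box-color statistics produce $\bar{k}$. The inequality $\dim\bar{S}_{1,p}(u)_{\bar{k}}\le D_{\bar{k}}$, combined with the surjection above, forces equality and hence an isomorphism in each graded piece. The upper bound itself I would obtain by exhibiting a canonical form modulo $V_0$: the first and second kind wheel conditions on $S_{1,p}(u)$, combined with the freedom afforded by the right $S'$-action, cut out a subspace of residue configurations in bijection with colored Young diagrams of the prescribed shape, paralleling the $\gl_1$ argument of~\cite{FJMM2}. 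The main obstacle is precisely this final step: the $\ssl_n$-coloring forces careful bookkeeping of how the two families of wheel conditions interact with the mod-$n$ labels, making the combinatorial reduction to a partition-indexed basis substantially heavier than in the $\gl_1$ setting treated in~\cite{FJMM2}.
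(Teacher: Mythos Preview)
Your overall strategy matches the paper's: identify $\bar{\textbf{1}}_u$ as a lowest weight vector of the correct weight, and then reduce the isomorphism to a dimension count. However, the heart of the proof is precisely the graded dimension estimate, and your treatment of it is only a sketch. Your step~2 (cyclicity of $\bar{\textbf{1}}_u$) is not needed as a separate argument: once the upper bound $\dim\bar{S}_{1,p}(u)_{\overline{k}}\le D_{\overline{k}}$ is established, the submodule generated by $\bar{\textbf{1}}_u$ already surjects onto the irreducible $\tau^p_{u,\bar{c}}$, forcing both cyclicity and injectivity simultaneously. So the entire content collapses to proving the upper bound, and here you have not supplied a mechanism---``residue configurations in bijection with colored Young diagrams'' is the desired conclusion, not a method.

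The paper handles this by constructing, for each partition $\lambda$ with $|\lambda|\le m$, an explicit \emph{specialization map} $\rho_\lambda$ on $S^m_{1,p}(u)$: one colors the boxes of $\lambda$ by $c(a,b)=p-a+b\pmod n$, fills the box $(a,b)$ with $q_1^aq_3^bu$, and then specializes the $x$-variables to these values in a carefully ordered procedure that tracks and cancels the zeros and poles coming from the first and second kind wheel conditions. The resulting filtration $S^{m,\lambda}_{1,p}(u):=\bigcap_{\mu\succ\lambda}\ker\rho_\mu$ has the key property (Lemma~\ref{properties of evaluation}) that $\rho_\lambda$ kills $S_{1,p}(u)_{\overline{k}}\star S_{\overline{l}}$ whenever $\overline{k}-\overline{k}^\lambda\notin\ZZ_+^{[n]}$, and that on the filtered piece of the correct degree the image of $\rho_\lambda$ coincides with $\rho_\lambda(S_{1,p}(u)_{\overline{k}^\lambda}\star S_{\overline{l}})$. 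This immediately gives that $\mathrm{gr}_\lambda(\bar{S}^m_{1,p}(u))$ is one-dimensional when $|\lambda|=m$ and zero otherwise, yielding the exact count $p(m)$. The delicate point you anticipate---the interaction of the two families of wheel conditions with the mod-$n$ coloring---is handled precisely by the explicit factors $G_{\overline{l},\lambda}$ and $Q_{\overline{l},\lambda}$ that track which numerator zeros survive the specialization; without writing these down, the argument does not go through.
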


\begin{cor}
  If $q_1,q_2,q_3$ are generic in the sense of~(\ref{generic}), then $\bar{\pi}^p_{u,\bar{c}}$ is irreducible.
\end{cor}

\begin{proof}[Proof of Theorem~\ref{main1}]
$\ $

 By Proposition~\ref{Fock module}, $\tau^p_{u,\bar{c}}$ is an irreducible
$^{'}\ddot{U}_{q,d}(\ssl_n)$-representation generated by $|\emptyset\rangle$.
Moreover, both $\bar{\bf{1}}_u\in \bar{S}_{1,p}(u)$ (the image of ${\bf{1}}_u$) and $|\emptyset\rangle \in F^{(p)}(u)$ are
the lowest weight vectors of the same weight.
Therefore, it suffices to estimate dimensions of the graded components of $\bar{S}_{1,p}(u)$:
\begin{equation}\tag{$\heartsuit$}
  \sum_{\overline{k}\in \NN^{[n]}}^{|\overline{k}|=m}\dim \bar{S}_{1,p}(u)_{\overline{k}}=p(m)\ \ \forall\ m\in \NN,
\end{equation}
where $p(m)$ stays for the number of size $m$ partitions.

\medskip
\noindent
 \emph{Descending filtration.}

 To prove ($\heartsuit$), we equip $S^m_{1,p}(u):=\underset{|\overline{k}|=m} \oplus S_{1,p}(u)_{\overline{k}}$
with a filtration $\{S^{m,\lambda}_{1,p}(u)\}_\lambda$ labeled by all size $\leq m$ partitions $\lambda$.
We define $S^{m,\lambda}_{1,p}(u)$ via the specialization maps $\rho_\lambda$ introduced below as
  $$S^{m,\lambda}_{1,p}(u):=\bigcap_{\mu\succ \lambda} \mathrm{Ker}(\rho_\mu)\subset S^m_{1,p}(u),$$
where $\succ$ denotes the lexicographic order on the set of size $\leq m$ partitions.

 Consider the $[n]$-coloring of the Young diagram $\lambda$
by assigning $c(\square):=p-a+b\ (\mathrm{mod}\ n)\in [n]$ to a box
$\square=(a,b)\in \lambda$ located at the $b$-th row and $a$-th column ($1\leq b\leq \lambda'_1, 1\leq a\leq\lambda_b$).
Define
  $$\overline{k}^{\lambda}:=(k^\lambda_0,\ldots,k^\lambda_{n-1})\in \NN^{[n]},\
    \mathrm{where}\ k^\lambda_i=\# \{\square\in \lambda\mid c(\square)=i\}.$$

\begin{rem}
 We denote $\tau^p_{u,(1,\ldots,1)}$ simply by $\tau^p_u$.
Note that the map $|\lambda\rangle\mapsto \prod_{\square\in \lambda} c_{c(\square)}\cdot  |\lambda\rangle$
induces an isomorphism of $^{'}\ddot{U}_{q,d}(\ssl_n)$-representations
$\tau^p_u\iso \tau^p_{u,\bar{c}}$ for any $\bar{c}\in (\CC^\times)^{[n]}$.
\end{rem}

 Let us fill the boxes of $\lambda$ by entering $q_1^{a-1}q_3^{b-1}u$ into the box $(a,b)\in \lambda$.
For $F\in S_{1,p}(u)_{\overline{k}}$, we would like to specialize $\overline{k}^\lambda$ variables
to the corresponding entries of $\lambda$. Such a naive substitution produces zeroes
in numerators and denominators, so we need to modify it properly to get $\rho_\lambda$.

\medskip
\noindent
 \emph{Specialization maps $\rho_\lambda$.}

 For $F\in S_{1,p}(u)_{\overline{k}}$, we set $\rho_\lambda(F)=0$ if $\overline{k}-\overline{k}^\lambda\notin \NN^{[n]}$.
If $\overline{l}:=\overline{k}-\overline{k}^\lambda\in \NN^{[n]}$, we do the following:

$\circ$
 First, we consider the corner box $\square=(1,1)\in \lambda$ of color $p$ and specialize $x_{p,k_p}\mapsto u$.
Since $F$ has the first order pole at $x_{p,k_p}=u$, the following is well-defined:
  $$\rho^{(1)}_\lambda(F):=[(x_{p,k_p}-u)\cdot F]_{\mid x_{p,k_p}\mapsto u}.$$

$\circ$
 Next, we specialize more variables to the entries of the remaining boxes from the first row and the first column.
For every box $(a+1,1)\in \lambda\ (0<a< \lambda_1)$ of color $p-a$,
we choose an unspecified yet variable of the
$(p-a)$-th family $\{x_{p-a,\cdot}\}$ and set it to $q_1^au$.
Likewise, for every box $(1,b+1)\in \lambda\ (0<b< \lambda'_1)$,
we choose an unspecified yet variable of the
$(p+b)$-th family $\{x_{p+b,\cdot}\}$ and set it to $q_3^bu$.
 We perform this procedure step-by-step moving from $(1,1)$ to the right and then from $(1,1)$ up.
We denote the resulting specialization of $F$ by $\rho^{(\lambda_1+\lambda'_1-1)}_\lambda(F)$.

$\circ$
 If $(2,2)\notin \lambda$, we set $\rho_\lambda(F):=\rho^{(\lambda_1+\lambda'_1-1)}_\lambda(F)$.
If $\lambda$ contains $(2,2)$, we
would like to specify another variable of the $p$-th family, say $x_{p,k_p-1}$, to
$q_1q_3u$. Due to the first kind wheel conditions,
the function $\rho^{(\lambda_1+\lambda'_1-1)}_\lambda(F)$ has
zero at $x_{p,k_p-1}=q_1q_3u$. Hence, the following is well-defined:
  $$\rho^{(\lambda_1+\lambda'_1)}_\lambda(F):=
    \left[\frac{1}{x_{p,k_p-1}-q_1q_3u}\cdot \rho^{(\lambda_1+\lambda'_1-1)}_\lambda(F)\right]_{\mid x_{p,k_p-1}\mapsto q_1q_3u}.$$

$\circ$
 Next, we start moving from $(2,2)$ to the right and then from $(2,2)$ up.
On each step, we specialize
the corresponding $x_{\cdot,\cdot}$-variable to the prescribed entry of the
diagram. However, due to the first kind wheel conditions, we
have to eliminate order $1$ zeros as above.

$\circ$
 Performing this procedure $|\lambda|$ times, we finally obtain
$\rho^{(|\lambda|)}_\lambda(F)\in \CC(\{x_{i,r}\}_{i\in [n]}^{1\leq r\leq l_i})$.
Set
  $$\rho_\lambda(F):=\rho^{(|\lambda|)}_\lambda(F).$$

\noindent
\emph{Key properties of $\rho_\lambda$.}

Tracing back the contribution of the first and second kind wheel conditions, we find that
\begin{equation*}
  \rho_\lambda\colon S_{1,p}(u)_{\overline{k}^\lambda+\overline{l}}\longrightarrow S_{\overline{l}}\cdot G_{\overline{l},\lambda}:=
  \{F'\cdot G_{\overline{l},\lambda}|F'\in S_{\overline{l}}\},
\end{equation*}
where
\begin{multline*}
  G_{\overline{l},\lambda}=
  \prod_{r=1}^{l_p}\frac{x_{p,r}-q^2u}{x_{p,r}-u}\times\\
  \frac{\prod_{\square=(a,b)\in X^+_\lambda} \prod_{r=1}^{l_{c(\square)}}(x_{c(\square),r}-q_1^{a-1}q_3^{b-1}u)\cdot
        \prod_{\square=(a,b)\in X^-_\lambda} \prod_{r=1}^{l_{c(\square)}}(x_{c(\square),r}-q_1^{a-1}q_3^{b-1}u)}
  {\prod_{\square=(a,b)\in \lambda}
  \left\{\prod_{r=1}^{l_{c(\square)-1}}(x_{c(\square)-1,r}-q_1^{a-1}q_3^{b-1}u) \prod_{r=1}^{l_{c(\square)+1}}(x_{c(\square)+1,r}-q_1^{a-1}q_3^{b-1}u)\right\}}.
\end{multline*}
Here the set $X^+_\lambda\subset \ZZ^2$ consists of those $(a,b)\in \ZZ^2$ such that
$(a+1,b)\& (a+1,b+1)\in \lambda\ \mathrm{or}\ (a,b+1)\& (a+1,b+1)\in \lambda$, while
$X^-_\lambda\subset \ZZ^2$ consists of those $(a,b)\in \ZZ^2$ such that
$(a-1,b)\& (a-1,b-1)\in \lambda\ \mathrm{or}\ (a,b-1)\& (a-1,b-1)\in \lambda.$

 For $F\in S^{|\lambda|+|\overline{l}|,\lambda}_{1,p}(u)_{\overline{k}^\lambda+\overline{l}}$,
we further have  $\rho_\lambda(F)\in S_{\overline{l}}\cdot G_{\overline{l},\lambda}Q_{\overline{l},\lambda}$, where
  $$Q_{\overline{l},\lambda}=
    \prod_{r=1}^{l_{p-\lambda_1}} (x_{p-\lambda_1,r}-q_1^{\lambda_1}u)\cdot
    \prod_{b\geq 1}^{\lambda_{b+1}<\lambda_b} \prod_{r=1}^{l_{p-\lambda_{b+1}+b}} (x_{p-\lambda_{b+1}+b,r}-q_1^{\lambda_{b+1}}q_3^bu).$$
Our next result establishes two crucial properties of $\rho_\lambda$.

\begin{lem}\label{properties of evaluation}
(a) If $\overline{k}-\overline{k}^\lambda\notin \NN^{[n]}$, then
$\rho_\lambda(S_{1,p}(u)_{\overline{k}}\star S_{\overline{l}})=0$
for any $\overline{l}\in \NN^{[n]}$.

\noindent
(b)  We have
      $\rho_\lambda(S^{|\lambda|+|\overline{l}|,\lambda}_{1,p}(u)_{\overline{k}^\lambda+\overline{l}})=
       \rho_\lambda(S_{1,p}(u)_{\overline{k}^\lambda}\star  S_{\overline{l}})$
for any $\overline{l}\in \NN^{[n]}$.
\end{lem}

\begin{proof}[Proof of Lemma~\ref{properties of evaluation}]
$\ $

(a) For $F_1\in S_{1,p}(u)_{\overline{k}}$ and $F_2\in S_{\overline{l}}$,
let us evaluate the $\rho_\lambda$-specialization of any summand from $F_1\star F_2$.
In what follows, we say that $q_1^aq^b_3u$ gets into $F_2$
in the chosen summand if the $x_{\cdot,\cdot}$-variable which is specialized to
$q_1^aq^b_3u$ enters $F_2$ rather than $F_1$.
If $u$ gets into $F_2$, we automatically get zero once we apply  $\rho^{(1)}_\lambda$.
A simple inductive argument shows that if at least one of the
variables $\{q_1^au\}_{a=1}^{\lambda_1-1} \cup \{q_3^bu\}_{b=1}^{\lambda'_1-1}$ gets into $F_2$,
we also obtain zero after applying $\rho^{(a+1)}_\lambda$ or $\rho^{(\lambda_1+b)}_\lambda$ since the corresponding
$\omega_{\cdot,\cdot}$-factor is zero.
 If $q_1q_3u$ gets into $F_2$, but all the entries from the first
hook of $\lambda$ get into $F_1$, then there are two zero
$\omega_{\cdot,\cdot}$-factors, and so
we get zero after applying $\rho^{(\lambda_1+\lambda'_1)}_\lambda$, etc.
 However, not all the specialized variables get into $F_1$ as $\overline{k}-\overline{k}^\lambda\notin \NN^{[n]}$.
Hence, the $\rho_\lambda$-specialization of this summand is zero, and so $\rho_\lambda(F_1\star F_2)=0$.

(b) For $F_1\in S_{1,p}(u)_{\overline{k}^\lambda},F_2\in S_{\overline{l}}$,
the specialization $\rho_\lambda(F_1\star F_2)$ is a sum of $\rho_\lambda$-specializations applied to each
summand from $F_1\star F_2$.
According to (a), only one such specialization is nonzero and we have
 $\rho_\lambda(F_1\star F_2)=\prod_{i\in [n]}\frac{k^{|\lambda|}_i!l_i!}{(k^{|\lambda|}_i+l_i)!}
  \rho_\lambda(F_1)\cdot F_2(\{x_{i,r}\}_{i\in [n]}^{1\leq r\leq l_i})\cdot P,$
where $P$ denotes the product of the corresponding
$\omega_{\cdot,\cdot}$-factors:
 $P=\prod_{\square=(a,b)\in \lambda} \prod_{i\in [n]} \prod_{r=1}^{l_i} \omega_{c(\square),i}(q_1^{a-1}q_3^{b-1}u/x_{i,r}).$
It is straightforward to check that
 $P=\nu\cdot G_{\overline{l},\lambda}Q_{\overline{l},\lambda}$ with $\nu\in \CC^\times$.
To complete the proof of (b), it remains to provide $F_1\in S_{1,p}(u)_{\overline{k}^\lambda}$
such that $\rho_\lambda(F_1)\ne 0$.
To achieve this, we set
 $F_1=K_{\bar{\lambda}'_{\lambda_1}}\star \cdots \star K_{\bar{\lambda}'_1}\cdot \prod_{r=1}^{\overline{k}^\lambda_p} (x_{p,r}-u)^{-1}$,
where $\bar{\lambda}'_r\in \NN^{[n]}$ is prescribed by the coloring of the $r$-th column of $\lambda$ and
 $K_{\overline{m}}:=
  \prod_{i\in [n]}\prod_{1\leq r\ne r'\leq m_i} (x_{i,r}-q^{-2}x_{i,r'})\cdot
  \prod_{i\in [n]} \prod_{1\leq r\leq m_i}^{1\leq r'\leq m_{i+1}} \frac{x_{i,r}-q_1x_{i+1,r'}}{x_{i,r}-x_{i+1,r'}}$.
\end{proof}

\noindent
\emph{Proof of ($\heartsuit$).}

Now we are ready to deduce~($\heartsuit$), completing our proof of Theorem~\ref{main1}.
Note that
  $$\dim S^m_{1,p}(u)=\sum_{\lambda: |\lambda|\leq m} \dim \mathrm{gr}_\lambda (S^m_{1,p}(u)),\ \
    \dim \bar{S}^m_{1,p}(u)=\sum_{\lambda: |\lambda|\leq m} \dim \mathrm{gr}_\lambda (\bar{S}^m_{1,p}(u)),$$
where the filtration $\{\bar{S}^{m,\lambda}_{1,p}(u)\}_\lambda$ on $\bar{S}^m_{1,p}(u)$ is
induced by the filtration $\{S^{m,\lambda}_{1,p}(u)\}_\lambda$ on $S^m_{1,p}(u)$.
The $\rho_\lambda$-specialization identifies $\mathrm{gr}_\lambda (S^m_{1,p}(u))$ with $\rho_\lambda (S^{m,\lambda}_{1,p}(u))$.
This observation and Lemma~\ref{properties of evaluation} imply that $\mathrm{gr}_\lambda (\bar{S}^m_{1,p}(u))$ is zero
if $|\lambda|<m$ and is $1$-dimensional if $|\lambda|=m$. This proves~($\heartsuit$).
\end{proof}

               %%%%%%%%%%%%%%%%%%%%%%%%%%%%% Generalization to S(u) and S^K(u) %%%%%%%%%%%%%%%%%%%%%%%%%%

\subsection{Generalizations to $S(\underline{u})$ and $S^{\underline{K}}(\underline{u})$}
 $\ $

The result of Theorem~\ref{main1} can be generalized in both directions mentioned in Section 1.8.
Recall the $^{'}\ddot{U}_{q,d}(\ssl_n)$-action $\pi_{\underline{u},\overline{c}}$ on the space $S(\underline{u})$,
which preserves the subspace $S(\underline{u})\star S'$ (see Lemma~\ref{preserves}).
Let $\bar{\pi}_{\underline{u},\bar{c}}$ denote the induced $^{'}\ddot{U}_{q,d}(\ssl_n)$-action on
$\bar{S}(\underline{u}):=S(\underline{u})/(S(\underline{u})\star S')$.
We call $\underline{u}=\{u_{i,s}\}_{i\in [n]}^{1\leq s\leq l_i}$ \emph{generic}
if $\{(i,u_{i,s},(1,\ldots,1))\}$ is \emph{generic} in the sense of subsection~\ref{section Fock}.

\begin{thm}\label{main2}
For a generic $\underline{u}=\{u_{i,s}\}_{i\in [n]}^{1\leq s\leq l_i}$, we have an isomorphism of $^{'}\ddot{U}_{q,d}(\ssl_n)$-modules
  $$\bar{\pi}_{\underline{u},\bar{c}}\simeq \otimes_{i=0}^{n-1}\otimes_{s=1}^{l_i} \tau^i_{u_{i,s}}.$$
\end{thm}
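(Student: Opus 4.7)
The strategy is to mimic the proof of Theorem~\ref{main1} in three steps: match the lowest weights, invoke irreducibility of the tensor product via Lemma~\ref{tensor}, and then bound the graded dimensions of $\bar{S}(\underline{u})$ from above by a suitably generalized specialization argument.

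First I would check that $\bar{\mathbf{1}}_{\underline{u}}\in \bar{S}(\underline{u})$ is a lowest weight vector with the correct weight. The identity $\pi_{\underline{u},\bar{c}}(f_i(z))\mathbf{1}_{\underline{u}}\in S(\underline{u})\star S'$ follows directly from the $f_i$-formula in Proposition~\ref{extension to toroidal action} (since the residue-extraction produces shuffle elements in $S'$ acting on the right). The weight of $\mathbf{1}_{\underline{u}}$ is $\phi^\pm_j(z)=\prod_{s=1}^{r_j}\phi(z/u_{j,s})$ by the $\psi^\pm$-formula with the replacement $\phi(z/u)^{\delta_{i,p}}\rightsquigarrow \prod_s \phi(z/u_{i,s})$. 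On the tensor product side, the lowest weight vector $\bigotimes_{i,s}|\emptyset\rangle$ has, via the coproduct~(\ref{coproduct}) and the fact that $\gamma$ acts trivially on each $\tau^i_{u_{i,s}}$, the same weight. By Lemma~\ref{tensor}, the target module is irreducible and lowest weight, so it is enough to prove the graded dimension estimate
\[
\sum_{|\bar{k}|=m}\dim \bar{S}(\underline{u})_{\bar{k}}\ \le\ \#\bigl\{(\lambda^{(i,s)})_{i,s}\ \big|\ \sum_{i,s}|\lambda^{(i,s)}|=m\bigr\},
\]
as the reverse inequality is automatic from irreducibility and surjectivity of the universal map.

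Second I would generalize the specialization maps $\rho_\lambda$ of Section~2.1 to maps $\rho_{\bar{\lambda}}$ indexed by tuples $\bar{\lambda}=(\lambda^{(i,s)})_{i\in[n],\,1\le s\le r_i}$ of partitions. For each $(i,s)$, I view $\lambda^{(i,s)}$ as coloured starting at colour $i$ (box $(a,b)\in\lambda^{(i,s)}$ has colour $i-a+b\pmod n$) and with anchor $u_{i,s}$, so box $(a,b)\in\lambda^{(i,s)}$ contributes a specialization of one variable of the appropriate colour to $q_1^aq_3^bu_{i,s}$. Exactly as in Theorem~\ref{main1}, one orders the boxes by the hook procedure, and for each $\lambda^{(i,s)}$ the first kind wheel condition handles the zero--over--zero cancellations that arise starting at the $(1,1)$-boxes. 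The colour vector $\bar{k}^{\bar{\lambda}}:=\sum_{i,s}\bar{k}^{\lambda^{(i,s)}}$ determines the allowed degree range, and the filtration $S^{m,\bar{\lambda}}(\underline{u}):=\bigcap_{\bar{\mu}\succ\bar{\lambda}}\mathrm{Ker}(\rho_{\bar{\mu}})$ is defined for any fixed total order $\succ$ on tuples of partitions refining the partial order by block-wise domination.

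Third, I would prove the two-part analog of Lemma~\ref{properties of evaluation}: $\rho_{\bar{\lambda}}(S(\underline{u})_{\bar{k}}\star S_{\bar{l}'})=0$ whenever $\bar{k}-\bar{k}^{\bar{\lambda}}\notin\ZZ_+^{[n]}$, and the image of $\rho_{\bar{\lambda}}$ on $S^{|\bar{k}^{\bar{\lambda}}+\bar{l}|,\bar{\lambda}}(\underline{u})_{\bar{k}^{\bar{\lambda}}+\bar{l}}$ coincides with the image on $S(\underline{u})_{\bar{k}^{\bar{\lambda}}}\star S_{\bar{l}}$. \emph{This is the main obstacle}, because the summands of the symmetrization~(\ref{left}) can now distribute the specialized values $q_1^aq_3^bu_{i,s}$ between $F_1\in S(\underline{u})$ and $F_2\in S_{\bar{l}'}$ across \emph{different} diagrams $\lambda^{(i,s)}$ and $\lambda^{(i',s')}$. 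The within-diagram vanishing reproduces verbatim the argument of Lemma~\ref{properties of evaluation}(i), using the zero $\omega_{\cdot,\cdot}$-factors along the first hook of each $\lambda^{(i,s)}$. The cross-diagram case is where genericity enters: a surviving summand would require a pair of $\omega_{\cdot,\cdot}$-factors forcing a relation of the form $u_{i,s}q_1^{a-a'}q_3^{b-b'}=u_{i',s'}$ with $i-a+b\equiv i'-a'+b'\pmod n$, which, after translating via $q_1q_3=q_2^{-1}$, is exactly the coincidence excluded by the genericity hypothesis for the pair $(i,u_{i,s}),(i',u_{i',s'})$.

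Granted the analog of Lemma~\ref{properties of evaluation}, the dimension count proceeds as in Theorem~\ref{main1}: the associated graded piece $\mathrm{gr}_{\bar{\lambda}}\bar{S}(\underline{u})$ at $|\bar{\lambda}|=m$ is one-dimensional (exhibited by a test function built as the shuffle product of the basic elements $K_{\bar{r}}$ from the proof of Theorem~\ref{main1}, one factor per column of each $\lambda^{(i,s)}$, divided by $\prod_{i,s,j}(x_{i,j}-u_{i,s})$), while pieces with $|\bar{\lambda}|<m$ vanish modulo $S(\underline{u})\star S'$. Summing gives the required bound and hence the isomorphism.
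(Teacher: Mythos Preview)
Your proposal is correct and follows essentially the same route as the paper's proof, which is a terse two-sentence reduction to Theorem~\ref{main1}: match lowest weights, invoke Lemma~\ref{tensor} for irreducibility, and run the dimension count with specialization maps $\rho_{\underline{\lambda}}$ now indexed by tuples $\underline{\lambda}=(\lambda^{(i,s)})$, where $\square=(a,b)\in\lambda^{(i,s)}$ carries colour $i-a+b\pmod n$ and specialization value $q_1^aq_3^bu_{i,s}$. Your write-up is in fact more detailed than the paper's, which does not spell out the cross-diagram issue at all; your observation that genericity is precisely what prevents coincidences of specialized values across distinct diagrams (and hence keeps the within-diagram hook argument of Lemma~\ref{properties of evaluation} from breaking) is the right diagnosis of where the hypothesis on $\{u_{i,s}\}$ enters.
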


\begin{proof}[Proof of Theorem~\ref{main2}]
$\ $

 The proof of this theorem goes along the same lines as for the case $\sum l_i=1$ from above.
According to Lemma~\ref{tensor}, $\otimes_{i=0}^{n-1}\otimes_{s=1}^{l_i} \tau^i_{u_{i,s}}$
is a well-defined, irreducible, lowest weight representation
generated by the lowest weight vector
$|\emptyset\rangle_{\underline{u}}:=\otimes_{i=0}^{n-1}\otimes_{s=1}^{l_i} |\emptyset\rangle$.
On the other hand, the vector $\bar{\bf{1}}_{\underline{u}}\in \bar{S}(\underline{u})$
is the lowest weight vector of the same weight as
$|\emptyset\rangle_{\underline{u}}$.
 Therefore, it suffices to compare the dimensions.
This can be accomplished as above by using the specialization maps
$\rho_{\underline{\lambda}}$ with $\underline{\lambda}=\{\lambda^{(0,1)},\ldots,\lambda^{(n-1,l_{n-1})}\}$
(they are defined similarly to $\rho_\lambda$, but the entry of $\square=(a,b)\in \lambda^{(i,s)}$
is set to be $q_1^{a-1}q_3^{b-1}u_{i,s}$, while its color is $c(\square):=i-a+b\ (\mathrm{mod}\ n)\in [n]$).
\end{proof}

Another generalization of Theorem~\ref{main1} establishes an isomorphism of
$\bar{\pi}^{\underline{K}}_{\underline{u},\bar{c}}$ and tensor products of Macmahon modules
for generic parameters.
For simplicity of our exposition, we restrict attention to the case of
$\pi^{p,K}_{u,c}$ for \emph{generic} $K$ (that is, $K\notin q^\ZZ d^\ZZ$).
Let $\bar{\pi}^{p,K}_{u,c}$ denote the induced $^{'}\ddot{U}_{q,d}(\ssl_n)$-action
on the factor space $\bar{S}^{K}_{1,p}(u):=S^{K}_{1,p}(u)/(S^{K}_{1,p}(u)\star S').$

\begin{thm}\label{main3}
  We have an isomorphism of $^{'}\ddot{U}_{q,d}(\ssl_n)$-modules $\bar{S}^{K}_{1,p}(u)\simeq M^{(p)}(u,K)$.
%For generic $\{u_{i,s},K_{i,s}\}_{i\in [n]}^{1\leq s\leq l_i}$, we have an isomorphism of $^{'}\ddot{U}_{q,d}(\ssl_n)$-modules
%$$\bar{\pi}^{\underline{K}}_{\underline{u},\bar{c}}\simeq \otimes_{i=0}^{n-1}\otimes_{s=1}^{l_i} M^{(i)}(u_{i,s}, K_{i,s}).$$
\end{thm}

\begin{proof}[Proof of Theorem~\ref{main3}]
$\ $

We apply same proof as for Theorem~\ref{main1}, while now the filtration is
parametrized by plane partitions $\bar{\lambda}=(\lambda^{(1)},\lambda^{(2)},\ldots)$.
Here, we fill the boxes of $\bar{\lambda}$ by entering $q_1^{a-1}q_2^{c-1}q_3^{b-1}u$
into the box $\square=(a,b)\in \lambda^{(c)}$ and define the specialization maps
$\rho_{\bar{\lambda}}$ as before. Whence the arguments from our proof of Theorem~\ref{main1} apply word by word.
Note that the only place where we used the second kind wheel conditions
was the appearance of the factor $\prod_{r=1}^{l_p} (x_{p,r}-q^2u)$ in $G_{\overline{l},\lambda}$.
This is now compensated by a change of $Q_{\overline{l},\lambda}$--the factor which keeps track of the filtration depth.
\end{proof}

        %%%%%%%%%%%%%%%%%%%%%%%%%%%%%%%%%%%%%%%%%% Fock vs vertex representations %%%%%%%%%%%%%%%%%%%%%%%%%%%%%%%%%%%%%%%%%%%%%%%%%

\subsection{Isomorphism $\rho^{p,\varpi}_{v,\bar{c}}\simeq \tau^{*,p}_{u}$}
$\ $

 Given a representation $\rho$ of an algebra $B$ on a vector space
$V$ and an algebra homomorphism $\sigma\colon A\to B$, we use $\rho^\sigma$
to denote the corresponding representation of $A$ on $V$: $\rho^\sigma(x)=\rho(\sigma(x))$.
To simplify our notation, we define $^{'}\ddot{U}_{q,d}(\ssl_n)$-modules $\rho^{p,\varpi}_{v,\bar{c}}:=(\rho^p_{v,\bar{c}})^\varpi$
and $\tau^{*,p}_{u}:={^*(\tau^p_u)}$.
Actually, the left dual and the right dual modules of $\tau^p_u$ are isomorphic: $^*(\tau^p_u)\simeq (\tau^p_u)^*$.

\begin{thm}\label{main4}
 For any $0\leq p\leq n-1, v\in \CC^\times, \bar{c}\in (\CC^\times)^{[n]}$, we have
an isomorphism of $^{'}\ddot{U}_{q,d}(\ssl_n)$-modules $\rho^{p,\varpi}_{v,\bar{c}}\simeq \tau^{*,p}_{u}$,
where $u:=(-1)^{\frac{(n-2)(n-3)}{2}}q^{-1}d^{-p-(n-1)\delta_{p,0}}\cdot (c_0\cdots c_{n-1})^{-1}.$
\end{thm}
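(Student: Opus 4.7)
The plan is to exhibit both ${^{'}\ddot{U}}_{q,d}(\ssl_n)$-modules $\rho^{p,\varpi}_{v,\bar c}$ and $\tau^{*,p}_u$ as irreducible modules containing a distinguished highest weight vector of a common weight, and then invoke uniqueness of the irreducible module of that highest weight. On the Fock side the distinguished vector is $\langle\emptyset|$, the dual of the lowest weight vector $|\emptyset\rangle\in F^{(p)}(u)$; using the antipode formulas~\ref{antipode}, $\langle\emptyset|$ is annihilated by every $e_i(z)$, is an eigenvector of $\psi_i^\pm(z)$ with eigenvalue $\phi(z/u)^{-\delta_{i,p}}$, and satisfies $q^{d_2}\langle\emptyset|=q^{p(n-p)/2}\langle\emptyset|$. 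Irreducibility of $\tau^{*,p}_u$ follows from that of $\tau^p_u$ in Proposition~\ref{Fock module}.

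On the vertex side, I take $\mathbf{1}_p:=v_0\otimes e^{\bar\Lambda_p}\in W(p)_n$, which is the highest weight vector of $W(p)_n$ when the latter is viewed via $h^{'}$ as a $U_q(L\ssl_n)$-module from the Frenkel--Jing/Saito construction. The first task is to verify $\rho^{p,\varpi}_{v,\bar c}(e_{i,k})\mathbf{1}_p=0$ for every $i\in[n]$ and $k\in\ZZ$. For $i\in[n]^\times$ and $k=0$ this follows from $\varpi(e_{i,0})=e_{i,0}$ (Proposition~\ref{explicit formulas for Miki}(a)) together with the observation that the lowest $z$-power of $\rho^p_{v,\bar c}(e_i(z))\mathbf{1}_p$ is $z^{1+\delta_{i,p}}$. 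For $i=0$ and $k=0$, the same proposition writes $\varpi(e_{0,0})$ as $d\cdot\gamma\psi_{0,0}$ times a nested $q$-commutator of $f$'s which is proportional to $h^{'}(x^+_0)$ by the Drinfeld--Jimbo identification of Section 1.3, and hence annihilates $\mathbf{1}_p$. The cases $|k|\geq 1$ then follow from the relations (T5$'$) by induction, once we establish that each $\varpi(h_{i,k})$ acts on $\mathbf{1}_p$ by a scalar.

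The scalar action is obtained as follows. From Proposition~\ref{explicit formulas for Miki}(a), $\varpi(\psi_{i,0})\mathbf{1}_p=q^{\delta_{i,p}}\mathbf{1}_p$ (for $i=0$ the twist $\varpi(\psi_{0,0})=\gamma\cdot\psi_{0,0}$ precisely offsets $\langle\bar h_0,\bar\Lambda_p\rangle=\delta_{p,0}-1$) and $\varpi(q^{d_2})\mathbf{1}_p=q^{p(n-p)/2}\mathbf{1}_p$ (using $\mathrm d\cdot\mathbf{1}_p=0$); these weights already isolate $\mathbf{1}_p$ in a one-dimensional weight space of $W(p)_n$, so the commutations $[h_{i,k},q^{d_2}]=[h_{i,k},\psi_{j,0}]=0$ force $\varpi(h_{i,k})\mathbf{1}_p\in\CC\cdot\mathbf{1}_p$ for every $i,k$. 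The explicit scalars for $k=\pm 1$ are extracted from the nested $q$-commutator formulas of Proposition~\ref{explicit formulas for Miki}(b)--(c) evaluated on $\mathbf{1}_p$ through the vertex operators $\rho^p_{v,\bar c}(f_j(z))$ and $\rho^p_{v,\bar c}(e_j(z))$, using the vacuum property $H_{j,k}v_0=0$ for $k>0$. Matching the resulting scalar with the $z^{\mp 1}$-coefficient of $\phi(z/u)^{-1}$ in $\psi_p^\pm(z)$ pins down $u$.

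The main obstacle is this last scalar comparison. The $n$ factors $f_{0,0},\ldots,f_{n-1,0}$ in the nested commutator for $\varpi(h_{p,1})$ contribute the product $(c_0\cdots c_{n-1})^{-1}$ through $\rho^p_{v,\bar c}(f_j(z))\propto c_j^{-1}$; the sign $(-1)^{(n-2)(n-3)/2}$ accumulates from the lattice cocycle $e^{\bar\alpha_j}e^{\bar\alpha_l}=(-1)^{\langle\bar h_j,\bar\alpha_l\rangle}e^{\bar\alpha_l}e^{\bar\alpha_j}$ when normal-ordering the $n-2$ interior vertex operators, from the $(-1)^{p+1}$ prefactor in Proposition~\ref{explicit formulas for Miki}(b), and from the $(-1)^{n\delta_{j,0}}$ twist in $\rho^p_{v,\bar c}(f_0(z))$; the $d$-power $d^{-p-(n-1)\delta_{p,0}}$ is inherited directly from the prefactors $d^{-p}$ or $d^{1-n}$ in Proposition~\ref{explicit formulas for Miki}(b)--(c); and the factor $q^{-1}$ emerges when converting the $h_{p,1}$-eigenvalue on $\mathbf{1}_p$ into the parameter $u$, while the $v$-dependence cancels in the normal-ordered zero-mode. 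Once $u$ is identified and the highest weight data agrees on $\mathbf{1}_p$ and $\langle\emptyset|$, irreducibility of both modules yields the stated isomorphism.
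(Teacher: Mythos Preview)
Your overall strategy coincides with the paper's: show that $\mathbf{1}_p=v_0\otimes e^{\bar\Lambda_p}$ and $\langle\emptyset|$ are highest weight vectors of the same weight and invoke irreducibility. The one-dimensional weight-space observation is a genuine economy the paper does not use; it correctly forces $\varpi(h_{i,k})\mathbf{1}_p\in\CC\cdot\mathbf{1}_p$ for all $i,k$ without any computation, and from this the inductive annihilation argument via (T5$'$) goes through.

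There is, however, a real gap. To conclude the isomorphism you must match the \emph{entire} highest weight, i.e.\ the eigenvalues of $\varpi(h_{i,k})$ on $\mathbf{1}_p$ for every $k$, not only $k=\pm 1$. Pinning down $u$ from the $k=\pm 1$ data does not by itself guarantee that the eigenvalues for $|k|\ge 2$ coincide with the coefficients of $\phi(z/u)^{-\delta_{i,p}}$; a priori $\rho^{p,\varpi}_{v,\bar c}$ could be the irreducible of some other rational highest weight sharing the same first Fourier mode. The paper closes this gap by computing the $h_{i,\pm 1}$-eigenvalues on the \emph{second} vector $v_0\otimes e^{-\bar\alpha_p}e^{\bar\Lambda_p}$ as well (equations~(\ref{three}) and~(\ref{four}) of Lemma~\ref{technical lemma}), and then running a recursion: the identity $[h_{p,\pm 1},e_{p,\pm r}]=\gamma^{-1/2}(q+q^{-1})e_{p,\pm(r+1)}$ yields
\[
\rho^{p,\varpi}_{v,\bar c}(e_{p,\pm r})\bigl(v_0\otimes e^{-\bar\alpha_p}e^{\bar\Lambda_p}\bigr)=c_p\lambda^{-\delta_{p,0}}(q^2u)^{\pm r}\cdot\mathbf{1}_p,
\]
and then $\psi_{p,\pm r}=\pm(q-q^{-1})[e_{p,\pm r},f_{p,0}]$ together with the known $f_{p,0}$-action produces the full weight $(\ddag)$. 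Your weight-space argument does extend to this second vector, but you still need its explicit $h_{p,\pm 1}$-eigenvalues to feed the recursion; this step is missing from the proposal.

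A minor correction: the nested $f$-commutator appearing in $\varpi(e_{0,0})$ is not $h'(x^+_0)$ (the latter is simply $e_{0,0}$). Rather, up to the Cartan prefactor and the $d$-twist it is the image of $x^+_0$ under the \emph{vertical} embedding $v'$, and the reason it annihilates $\mathbf{1}_p$ is that $\rho^p_{v,\bar c}\circ v'$ realises $W(p)_n$ as the Frenkel--Jing level-one module $L(\Lambda_p)$, in which every Chevalley generator $x^+_i$ kills the highest weight vector.
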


\begin{cor}\label{irrelevence2}
 For any $0\leq p\leq n-1,\ v,v'\in \CC^\times,\ \bar{c},\bar{c}'\in (\CC^\times)^{[n]}$
with $\prod_{i\in [n]} c_i=\prod_{i\in [n]} c_i'$, we have an
isomorphism of  $\ddot{U}^{'}_{q,d}(\ssl_n)$-representations
$\rho^p_{v,\bar{c}}\simeq \rho^p_{v',\bar{c}'}$.
\end{cor}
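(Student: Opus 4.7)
My plan is to deduce this corollary as a direct consequence of Theorem~\ref{main4}, by noticing that the parameter $u$ appearing there depends only on $p$ and on the product $\C := \prod_{i\in [n]} c_i$, but not on $v$ or on the individual $c_i$'s.

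More precisely, the first step is to apply Theorem~\ref{main4} to the pair $(v,\bar{c})$, obtaining an isomorphism of $^{'}\ddot{U}_{q,d}(\ssl_n)$-modules
\[
\rho^{p,\varpi}_{v,\bar{c}} \simeq \tau^{*,p}_u, \qquad u=(-1)^{\frac{(n-2)(n-3)}{2}}q^{-1}d^{-p-(n-1)\delta_{p,0}}\C^{-1}.
\]
Applying the same theorem to the pair $(v',\bar{c}')$ produces $\rho^{p,\varpi}_{v',\bar{c}'} \simeq \tau^{*,p}_{u'}$ with $u'$ given by the same formula but with $\C$ replaced by $\C':=\prod c_i'$. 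The hypothesis $\C=\C'$ forces $u=u'$, so the two target modules $\tau^{*,p}_u$ and $\tau^{*,p}_{u'}$ coincide. Composing the two isomorphisms therefore yields
\[
\rho^{p,\varpi}_{v,\bar{c}} \simeq \rho^{p,\varpi}_{v',\bar{c}'}
\]
as $^{'}\ddot{U}_{q,d}(\ssl_n)$-modules.

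Finally, I would pass back across Miki's isomorphism $\varpi:{^{'}\ddot{U}_{q,d}(\ssl_n)}\iso \ddot{U}^{'}_{q,d}(\ssl_n)$. By definition $\rho^{p,\varpi}_{v,\bar{c}}(x)=\rho^p_{v,\bar{c}}(\varpi(x))$, so any linear intertwiner $T$ between the $\varpi$-twisted representations satisfies $T\circ\rho^p_{v,\bar{c}}(\varpi(x))=\rho^p_{v',\bar{c}'}(\varpi(x))\circ T$ for every $x\in{^{'}\ddot{U}_{q,d}(\ssl_n)}$; since $\varpi$ is surjective, this is the same as $T\circ\rho^p_{v,\bar{c}}(y)=\rho^p_{v',\bar{c}'}(y)\circ T$ for every $y\in \ddot{U}^{'}_{q,d}(\ssl_n)$, establishing the claimed isomorphism $\rho^p_{v,\bar{c}}\simeq \rho^p_{v',\bar{c}'}$.

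There is no genuine obstacle here: all the work has already been done in Theorem~\ref{main4}, and the corollary just records the fact that the formula for $u$ given there is insensitive both to $v$ and to the choice of factorization of $\C$ into $c_0,\ldots,c_{n-1}$. If anything merits attention, it is the bookkeeping verification that the parameter $u$ in Theorem~\ref{main4} is indeed independent of $v$, which one reads off immediately from its explicit expression.
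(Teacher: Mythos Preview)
Your argument is correct and is exactly the deduction the paper intends: the corollary is stated immediately after Theorem~\ref{main4} without a separate proof, and the only content is precisely your observation that the parameter $u$ in Theorem~\ref{main4} depends on $(p,\C)$ alone, after which one untwists via the surjectivity of $\varpi$.
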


%\begin{rem}
%Note that $u$ does not depend on $v$, since $\rho^{p,\varpi}_{v,\bar{c}}\simeq \rho^{p,\varpi}_{1,\bar{c}}$.
%\end{rem}

\begin{proof}[Proof of Theorem~\ref{main4}]
$\ $

 Our proof consists of three steps. First, we verify that
both $v_0\otimes e^{\bar{\Lambda}_p}$ and $|\emptyset\rangle^*$
have the same eigenvalues with respect to the \emph{finite Cartan subalgebra}
$\CC[\psi^{\pm 1}_{0,0},\ldots,\psi^{\pm 1}_{n-1,0}, q^{\pm d_2}]$.
 Second, we show that both vectors are annihilated by
$e_{i,k}$-action for any $i\in [n], k\in \ZZ$.
Finally, we prove that they have the same eigenvalues with respect
to $\psi_{i,l}$-action for any $i\in [n], l\in \ZZ\backslash \{0\}$.

\medskip
\noindent
\emph{Step 1: Comparing weights with respect to $\CC[\psi^{\pm 1}_{0,0},\ldots,\psi^{\pm 1}_{n-1,0}, q^{\pm d_2}]$}.

 According to Proposition~\ref{Saito representation}, elements $\psi_{i,0}, \gamma, q^{d_1}$
act on $v_0\otimes e^{\bar{\Lambda}_p}$ via multiplication by $q^{\langle\bar{h}_i,\bar{\Lambda}_p\rangle}, q, 1$, respectively.
Combining this with Proposition~\ref{explicit formulas for Miki}(a), we get
  $$\rho^{p,\varpi}_{v,\bar{c}}(q^{d_2})v_0\otimes e^{\bar{\Lambda}_p}=q^{\frac{p(n-p)}{2}}\cdot v_0\otimes e^{\bar{\Lambda}_p},\
    \rho^{p,\varpi}_{v,\bar{c}}(\psi_{i,0}) v_0\otimes e^{\bar{\Lambda}_p}=q^{\delta_{i,p}}\cdot v_0\otimes e^{\bar{\Lambda}_p}\
    \ \forall\ i\in [n].$$
We also have
 $\tau^p_u(q^{d_2})|\emptyset\rangle=q^{-\frac{p(n-p)}{2}}\cdot |\emptyset\rangle$
and
 $\tau^p_u(\psi_{i,0})|\emptyset\rangle=q^{-\delta_{i,p}}\cdot |\emptyset\rangle.$
Therefore, the vectors $v_0\otimes e^{\bar{\Lambda}_p}\in  \rho^{p,\varpi}_{v,\bar{c}}$ and
$|\emptyset\rangle^* \in \tau^{*,p}_u$ have the same weights with respect to
$\CC[\psi^{\pm 1}_{0,0},\ldots,\psi^{\pm 1}_{n-1,0}, q^{\pm d_2}]$.

\begin{rem}
 This explains the appearance of $q^{-\frac{p(n-p)}{2}}$ in the formulas for
$\tau^p_{u,\bar{c}}(q^{d_2}),\ \pi^p_{u,\bar{c}}(q^{d_2})$.
\end{rem}

\medskip
\noindent
\emph{Step 2: Verifying an annihilation property with respect to ${^{'}\ddot{U}^{+}}$}.

 First, we prove $\rho^{p,\varpi}_{v,\bar{c}}(e_{i,0}) v_0\otimes e^{\bar{\Lambda}_p}=0$ for $i\in[n]$.
For $i\ne 0$, this is clear as $\langle\bar{h}_i,\bar{\Lambda}_p\rangle+1>0$, while $H_{i',k}v_0=0$ for all $i'\in[n], k>0$.
For $i=0$, $\varpi(e_{0,0})=d\gamma \psi_{0,0}[\cdots[f_{1,1},f_{2,0}]_q,\cdots,f_{n-1,0}]_q$ by Proposition~\ref{explicit formulas for Miki}(a),
and the equality $\rho^{p,\varpi}_{v,\bar{c}}(e_{0,0}) v_0\otimes e^{\bar{\Lambda}_p}=0$ follows from our next result.

\begin{lem}\label{e0-annihilation}
 $\rho^p_{v,\bar{c}}([\cdots[f_{1,1},f_{2,0}]_q,\cdots,f_{n-1,0}]_q) v_0\otimes e^{\bar{\Lambda}_p}=0$.
\end{lem}

\begin{proof}[Proof of Lemma~\ref{e0-annihilation}]
$\ $

It suffices to show that any summand $f_{i_{n-1},r_{n-1}}\cdots f_{i_1,r_1}$ of the above multicommutator
(here $\{i_1, \ldots, i_{n-1}\}=[n]^\times$ and $r_k=\delta_{i_k,1}$)
acts trivially on $v_0\otimes e^{\bar{\Lambda}_p}$.
Since $-\langle\bar{h}_i,\bar{\Lambda}_p\rangle+1>0$ for $i\ne p$, we see that
 $\rho^p_{v,\bar{c}}(f_{i_1,r_1}) v_0\otimes e^{\bar{\Lambda}_p}=0$ unless $i_1=p\ne 1$.
For $i_1=p\ne 1$, we get
 $\rho^p_{v,\bar{c}}(f_{i_1,j_1}) v_0\otimes e^{\bar{\Lambda}_p}=\pm c^{-1}_{i_1}v_0\otimes e^{\bar{\Lambda}^{(1)}_p}$
with $\bar{\Lambda}^{(1)}_p:=\bar{\Lambda}_p-\bar{\alpha}_{i_1}$. The
key property of this weight is $-\langle\bar{h}_i,\bar{\Lambda}^{(1)}_p\rangle+1\geq 0$.
In particular, $\rho^p_{v,\bar{c}}(f_{i_2,r_2}) v_0\otimes e^{\bar{\Lambda}^{(1)}_p}=0$
unless $i_2=p-1\ne 1$ or $i_2=p+1\ne 1$. In the latter two
cases, the result is $\pm c^{-1}_{i_2}v_0\otimes e^{\bar{\Lambda}^{(2)}_p}$ with
$\bar{\Lambda}^{(2)}_p:=\bar{\Lambda}^{(1)}_p-\bar{\alpha}_{i_2}$
satisfying a similar property. Continuing in the same way, we finally
get to the $k$-th place with $i_k=1$ and $r_k=1$.
As $-\langle \bar{h}_1,\bar{\Lambda}^{(k-1)}_p\rangle+1\geq 0$, we have
$\rho^p_{v,\bar{c}}(f_{i_k,r_k}\cdots f_{i_1,r_1}) v_0\otimes  e^{\bar{\Lambda}_p}=0.$
\end{proof}

This completes our proof of the equality $\rho^{p,\varpi}_{v,\bar{c}}(e_{i,0}) v_0\otimes  e^{\bar{\Lambda}_p}=0$ for any $i\in [n]$.

 According to~($\ddag$) from the next step, we have
$\rho^{p,\varpi}_{v,\bar{c}}(h_{j,\pm 1}) v_0\otimes  e^{\bar{\Lambda}_p}=0$ for $j\ne p$.
Combining this formula with the relation (T5$'$)
 $[h_{j,\pm 1},e_{i,k}]=d^{\mp m_{j,i}}\gamma^{-1/2} [a_{j,i}]_q\cdot e_{i,k\pm 1}$,
one gets
  $$\rho^{p,\varpi}_{v,\bar{c}}(e_{i,k}) v_0\otimes e^{\bar{\Lambda}_p}=0\ \ \mathrm{for\ any}\ \ i\in [n], k\in \ZZ.$$

 On the other hand, the identity $S(e_i(z))=-\psi^-_i(\gamma^{-1/2}z)^{-1}e_i(\gamma^{-1}z)$ combined with the formulas
of Proposition~\ref{Fock module} imply a similar equality
$\tau^{*,p}_u(e_{i,k}) |\emptyset\rangle^*=0\ \mathrm{for\ any}\ i\in [n], k\in \ZZ.$

\medskip
\noindent
\emph{Step 3: Comparing weights with respect to ${^{'}\ddot{U}^{0}}$}.

 Let us now prove that both $v_0\otimes e^{\bar{\Lambda}_p}\in \rho^{p,\varpi}_{v,\bar{c}}$ and $|\emptyset\rangle^*\in \tau^{*,p}_u$
are eigenvectors with respect to the generators $\{\psi_{i,l}\}_{i\in [n]}^{l\ne 0}$ and have the same eigenvalues.
By the definition of $\tau^p_u$, we have
  $$\tau^{*,p}_u(\psi^\pm_{i}(z))|\emptyset\rangle^*=\phi(z/u)^{-\delta_{i,p}} |\emptyset\rangle^*\Rightarrow
    \tau^{*,p}_u(\psi_{i,\pm r})|\emptyset\rangle^*=\pm \delta_{i,p}(q-q^{-1})(q^2u)^{\pm r} |\emptyset\rangle^*$$
for any $i\in [n], r\in \ZZ_{>0}$.
Therefore, it remains to show
\begin{equation}\tag{$\ddag$}
  \rho^{p,\varpi}_{v,\bar{c}}(\psi_{i,\pm r}) v_0\otimes e^{\bar{\Lambda}_p}=
  \pm \delta_{i,p}(q-q^{-1})(q^2u)^{\pm r}v_0\otimes e^{\bar{\Lambda}_p}\
  \mathrm{for\ any}\ i\in [n], r\in \ZZ_{>0}.
\end{equation}

Our proof of ($\ddag$) is based on the following technical result.
\begin{lem}\label{technical lemma}
 We have the following equalities:
\begin{equation}\label{zero}
  \rho^{p,\varpi}_{v,\bar{c}}(f_{i,0})v_0\otimes e^{\bar{\Lambda}_p}=
  \delta_{i,p}c_p^{-1}\cdot \lambda^{\delta_{p,0}}\cdot v_0\otimes e^{-\bar{\alpha}_p}e^{\bar{\Lambda}_p},
\end{equation}
\begin{equation}\label{one}
  \rho^{p,\varpi}_{v,\bar{c}}(h_{i,-1})v_0\otimes e^{\bar{\Lambda}_p}=
  \delta_{i,p} q^{-1}u^{-1}\cdot v_0\otimes e^{\bar{\Lambda}_p},
\end{equation}
\begin{equation}\label{two}
  \rho^{p,\varpi}_{v,\bar{c}}(h_{i,1})v_0\otimes e^{\bar{\Lambda}_p}=
  \delta_{i,p} qu\cdot v_0\otimes e^{\bar{\Lambda}_p},
\end{equation}
\begin{equation}\label{three}
  \rho^{p,\varpi}_{v,\bar{c}}(h_{p,-1})v_0\otimes e^{-\bar{\alpha}_p}e^{\bar{\Lambda}_p}=
  -q^{-3}u^{-1} \cdot v_0\otimes e^{-\bar{\alpha}_p}e^{\bar{\Lambda}_p},
\end{equation}
\begin{equation}\label{four}
  \rho^{p,\varpi}_{v,\bar{c}}(h_{p,1}) v_0\otimes e^{-\bar{\alpha}_p}e^{\bar{\Lambda}_p}=
  -q^3u \cdot v_0\otimes e^{-\bar{\alpha}_p}e^{\bar{\Lambda}_p},
\end{equation}
where
 $\C:=\prod_{j\in [n]} c_j,\ \lambda:=(-1)^{\frac{(n-2)(n-3)}{2}}v^{-1}q^{-1}d^{-1}\C,\
  u:=(-1)^{\frac{(n-2)(n-3)}{2}}q^{-1}d^{-p-(n-1)\delta_{p,0}}\C^{-1}$.
\end{lem}

\begin{proof}[Proof of Lemma~\ref{technical lemma}]
$\ $

\noindent
$\circ$ For $i\ne 0$, we have $\varpi(f_{i,0})=f_{i,0}$ and
$-\langle \bar{h}_i,\bar{\Lambda}_p\rangle+1=1-\delta_{i,p}\geq 0$, so that
  $$\rho^{p,\varpi}_{v,\bar{c}}(f_{i,0})v_0\otimes e^{\bar{\Lambda}_p}=
    \delta_{i,p}c_p^{-1}\cdot v_0\otimes e^{-\bar{\alpha}_p}e^{\bar{\Lambda}_p}.\
    \checkmark$$
For $i=0$, we apply the formula for $\varpi(f_{0,0})$ from Proposition~\ref{explicit formulas for Miki}(a) to get
  $$\rho^{p,\varpi}_{v,\bar{c}} (f_{0,0})v_0\otimes e^{\bar{\Lambda}_p}=q^{-\delta_{p,0}}d^{-1}\cdot
    \rho^p_{v,\bar{c}}([e_{n-1,0},\cdots,[e_{2,0}, e_{1,-1}]_{q^{-1}}\cdots ]_{q^{-1}})v_0\otimes e^{\bar{\Lambda}_p}.$$
As $\rho^p_{v,\bar{c}}(e_{j,0}) v_0\otimes e^{\bar{\Lambda}_p}=0$ for $j\ne 0$, we see (by rewriting the above multicommutator) that
  $$\rho^p_{v,\bar{c}}([e_{n-1,0},\cdots,[e_{2,0}, e_{1,-1}]_{q^{-1}}\cdots ]_{q^{-1}})v_0\otimes e^{\bar{\Lambda}_p}=
    \rho^p_{v,\bar{c}}(e_{n-1,0})\cdots \rho^p_{v,\bar{c}}(e_{2,0})\rho^p_{v,\bar{c}}(e_{1,-1})v_0\otimes  e^{\bar{\Lambda}_p}.$$
For $p\ne 0$, the same argument as before implies
 $\rho^p_{v,\bar{c}}(e_{p,0})\cdots \rho^p_{v,\bar{c}}(e_{2,0})\rho^p_{v,\bar{c}}(e_{1,-1})v_0\otimes  e^{\bar{\Lambda}_p}=0,$
while for $p=0$ we have
  $$\rho^p_{v,\bar{c}}(e_{n-1,0})\cdots \rho^p_{v,\bar{c}}(e_{2,0})\rho^p_{v,\bar{c}}(e_{1,-1})v_0\otimes  e^{\bar{\Lambda}_p}=
    v^{-1}(c_1\cdots c_{n-1})\cdot v_0\otimes  (e^{\bar{\alpha}_{n-1}}\cdots e^{\bar{\alpha}_1}).$$
Since $e^{\bar{\alpha}_{n-1}}\cdots  e^{\bar{\alpha}_1}=(-1)^{\frac{(n-2)(n-3)}{2}}e^{-\bar{\alpha}_0}$, we finally get
  $$\rho^{0,\varpi}_{v,\bar{c}}(f_{0,0})v_0\otimes e^0=
    (-1)^{\frac{(n-2)(n-3)}{2}}v^{-1}q^{-1}d^{-1}\C\cdot c_0^{-1}v_0\otimes e^{-\bar{\alpha}_0}.\ \checkmark$$

\medskip
In what follows below, we assume $p\ne 0$.

\noindent
$\circ$ Combining the formula for $\varpi(h_{0,-1})$ from Proposition~\ref{explicit formulas for Miki}(c) with
        $$\rho^p_{v,\bar{c}}(e_{0,1})v_0\otimes  e^{\bar{\Lambda}_p}=
          \rho^p_{v,\bar{c}}(e_{2,0})v_0\otimes e^{\bar{\Lambda}_p}=\cdots=
          \rho^p_{v,\bar{c}}(e_{n-1,0})v_0\otimes e^{\bar{\Lambda}_p}=0,$$
we get
  $$\rho^{p,\varpi}_{v,\bar{c}}(h_{0,-1})v_0\otimes  e^{\bar{\Lambda}_p}=
    (-1)^nd^{n-1}\cdot \rho^p_{v,\bar{c}}(e_{0,1})\rho^p_{v,\bar{c}}(e_{n-1,0})\cdots \rho^p_{v,\bar{c}}(e_{2,0})\rho^p_{v,\bar{c}}(e_{1,-1})
    v_0\otimes  e^{\bar{\Lambda}_p}.$$
The latter is zero, since
 $\rho^p_{v,\bar{c}}(e_{p-1,0})\cdots \rho^p_{v,\bar{c}}(e_{1,-1})v_0\otimes  e^{\bar{\Lambda}_p}=
  \pm v^{-1}c_1\cdots c_{p-1}\cdot v_0\otimes e^{\bar{\Lambda}_p+\bar{\alpha}_1+\cdots+\bar{\alpha}_{p-1}}$
and $\langle \bar{h}_p,  \bar{\Lambda}_p+\bar{\alpha}_1+\cdots+\bar{\alpha}_{p-1}\rangle+1>0$.
Thus $\rho^{p,\varpi}_{v,\bar{c}}(h_{0,-1})v_0\otimes e^{\bar{\Lambda}_p}=0$ for $p\ne 0$. $\checkmark$

For $i\ne 0$, the formula for $\varpi(h_{i,-1})$ combined with
$\rho^p_{v,\bar{c}}(e_{j,0})v_0\otimes e^{\bar{\Lambda}_p}=0\  (j\ne  0)$
implies
  $$\rho^{p,\varpi}_{v,\bar{c}}(h_{i,-1})v_0\otimes  e^{\bar{\Lambda}_p}=(-1)^{i+1}d^i
    \rho^p_{v,\bar{c}}(e_{i,0})\cdots \rho^p_{v,\bar{c}}(e_{1,0})\rho^p_{v,\bar{c}}(e_{i+1,0})\cdots
    \rho^p_{v,\bar{c}}(e_{n-1,0})\rho^p_{v,\bar{c}}(e_{0,0}) v_0\otimes  e^{\bar{\Lambda}_p}.$$
If $i<p$, then $\langle \bar{h}_p,\bar{\Lambda}_p+\bar{\alpha}_0+\sum_{j=p+1}^{n-1}\bar{\alpha}_j\rangle+1>0$ and so
  $$\rho^p_{v,\bar{c}}(e_{p,0})\cdots \rho^p_{v,\bar{c}}(e_{n-1,0})\rho^p_{v,\bar{c}}(e_{0,0})v_0\otimes  e^{\bar{\Lambda}_p}=0
    \Rightarrow \rho^{p,\varpi}_{v,\bar{c}}(h_{i,-1})v_0\otimes  e^{\bar{\Lambda}_p}=0.$$
If $i>p$, then $\langle \bar{h}_p,\bar{\Lambda}_p+\bar{\alpha}_0+\sum_{j=i+1}^{n-1}\bar{\alpha}_j+\sum_{j=1}^{p-1}\bar{\alpha}_j\rangle+1>0$ and
hence $\rho^{p,\varpi}_{v,\bar{c}}(h_{i,-1})v_0\otimes e^{\bar{\Lambda}_p}=0$.
If $i=p$, then we get
  $$\rho^{p,\varpi}_{v,\bar{c}}(h_{p,-1})v_0\otimes e^{\bar{\Lambda}_p}=
    (-1)^{p+1}d^p(c_0\cdots c_{n-1}) v_0\otimes (e^{\bar{\alpha}_p}\cdots e^{\bar{\alpha}_1}e^{\bar{\alpha}_{p+1}}\cdots
    e^{\bar{\alpha}_{n-1}}e^{\bar{\alpha}_0}e^{\bar{\Lambda}_p})=$$
  $$(-1)^{\frac{(n-2)(n-3)}{2}}d^p\C\cdot v_0\otimes e^{\bar{\Lambda}_p}
    \ \ \mathrm{as}\ \
    e^{\bar{\alpha}_p}\cdots e^{\bar{\alpha}_1}e^{\bar{\alpha}_{p+1}}\cdots e^{\bar{\alpha}_{n-1}}e^{\bar{\alpha}_0}=
    (-1)^{\frac{n(n-1)}{2}+p}\cdot e^0.\
    \checkmark$$

\noindent
$\circ$ Due to similar arguments (though this case is a bit more tedious),
we have $\rho^{p,\varpi}_{v,\bar{c}}(h_{i,1}) v_0\otimes e^{\bar{\Lambda}_p}=0$ if $i\ne p$.
For $i=p$, we get
  $$\rho^{p,\varpi}_{v,\bar{c}}(h_{p,1})v_0\otimes e^{\bar{\Lambda}_p}=
    (-1)^{n+p+1}d^{-p}(c_0^{-1}\cdots c_{n-1}^{-1}) v_0\otimes (e^{-\bar{\alpha}_0}e^{-\bar{\alpha}_{n-1}}\cdots
    e^{-\bar{\alpha}_{p+1}}e^{-\bar{\alpha}_1}\cdots e^{-\bar{\alpha}_p}e^{\bar{\Lambda}_p})=$$
  $$(-1)^{\frac{(n-2)(n-3)}{2}}d^{-p}\C^{-1}\cdot v_0\otimes e^{\bar{\Lambda}_p}
    \ \ \mathrm{as}\ \
    e^{-\bar{\alpha}_0}e^{-\bar{\alpha}_{n-1}}\cdots e^{-\bar{\alpha}_{p+1}}e^{-\bar{\alpha}_1}\cdots e^{-\bar{\alpha}_p}=
    (-1)^{\frac{n(n+1)}{2}+p}\cdot e^0.
    \ \checkmark$$

\noindent
$\circ$
Arguing as above, only one summand of the corresponding multicommutator acts nontrivially:
  $$\rho^{p,\varpi}_{v,\bar{c}}(h_{p,-1})v_0\otimes e^{-\bar{\alpha}_p}e^{\bar{\Lambda}_p}=$$
  $$(-1)^{p+1}d^p(-q^{-2})\rho^p_{v,\bar{c}}(e_{p-1,0})\cdots \rho^p_{v,\bar{c}}(e_{1,0})\rho^p_{v,\bar{c}}(e_{p+1,0})\cdots
    \rho^p_{v,\bar{c}}(e_{0,0})\rho^p_{v,\bar{c}}(e_{p,0})v_0\otimes e^{-\bar{\alpha}_p}e^{\bar{\Lambda}_p}=$$
  $$(-1)^{1+\frac{(n-2)(n-3)}{2}}d^pq^{-2}\C\cdot v_0\otimes e^{-\bar{\alpha}_p}e^{\bar{\Lambda}_p}.\ \checkmark$$

\noindent
$\circ$
Arguing as above, only one summand of the corresponding multicommutator acts nontrivially:
  $$\rho^{p,\varpi}_{v,\bar{c}}(h_{p,1})v_0\otimes e^{-\bar{\alpha}_p}e^{\bar{\Lambda}_p}=$$
  $$(-1)^{p+1}d^{-p}(-q^2)\rho^p_{v,\bar{c}}(f_{p,0})\rho^p_{v,\bar{c}}(f_{0,0})\cdots
    \rho^p_{v,\bar{c}}(f_{p+1,0})\rho^p_{v,\bar{c}}(f_{1,0})\cdots
    \rho^p_{v,\bar{c}}(f_{p-1,0}) v_0\otimes e^{-\bar{\alpha}_p}e^{\bar{\Lambda}_p}=$$
  $$(-1)^{1+\frac{(n-2)(n-3)}{2}}d^{-p}q^{2}\C^{-1}\cdot v_0\otimes e^{-\bar{\alpha}_p}e^{\bar{\Lambda}_p}.\ \checkmark$$

The proofs of~(\ref{one}--\ref{four}) for $p=0$ are analogous and are left to the interested reader.
\end{proof}

\noindent
\emph{Proof of~($\ddag$)}.

Note that
 $\rho^{p,\varpi}_{v,\bar{c}}(e_{p,0}) v_0\otimes e^{-\bar{\alpha}_p}e^{\bar{\Lambda}_p}=
  c_p \lambda^{-\delta_{p,0}} \cdot v_0\otimes  e^{\bar{\Lambda}_p}$.
Combining this with the identity $[h_{p,\pm 1}, e_{p,\pm r}]=\gamma^{-1/2}(q+q^{-1}) e_{p,\pm (r+1)}$ and
the equalities~(\ref{one}--\ref{four}) of Lemma~\ref{technical lemma}, we get
  $$\rho^{p,\varpi}_{v,\bar{c}}(e_{p,\pm r}) v_0\otimes e^{-\bar{\alpha}_p}e^{\bar{\Lambda}_p}=
    c_p\lambda^{-\delta_{p,0}}(q^2u)^{\pm r}\cdot v_0\otimes e^{\bar{\Lambda}_p}\
    \mathrm{for}\ r\in \ZZ_{>0}.$$
On the other hand, we have
  $$\rho^{p,\varpi}_{v,\bar{c}}(\psi_{i,\pm r})=
    \pm (q-q^{-1})[\rho^{p,\varpi}_{v,\bar{c}}(e_{i, \pm r}), \rho^{p,\varpi}_{v,\bar{c}}(f_{i,0})]\
    \mathrm{for}\ r\in \ZZ_{>0}.$$
Since
 $\rho^{p,\varpi}_{v,\bar{c}}(e_{i,\pm r})v_0\otimes e^{\bar{\Lambda}_p}=
  \rho^{p,\varpi}_{v,\bar{c}}(f_{i,0})v_0\otimes e^{\bar{\Lambda}_p}=0$
for $i\ne p$, we get $\rho^{p,\varpi}_{v,\bar{c}}(\psi_{i,\pm r})v_0\otimes  e^{\bar{\Lambda}_p}=0$ if $i\ne p$.
The equality ($\ddag$) follows now from
  $$\rho^{p,\varpi}_{v,\bar{c}} (\psi_{p,\pm r})v_0\otimes e^{\bar{\Lambda}_p}=
    \pm (q-q^{-1}) \rho^{p,\varpi}_{v,\bar{c}} (e_{p,\pm r}) \rho^{p,\varpi}_{v,\bar{c}}(f_{p,0})v_0\otimes e^{\bar{\Lambda}_p}=
    \pm (q-q^{-1})(q^2u)^{\pm r}v_0\otimes e^{\bar{\Lambda}_p}.$$

\medskip
 The irreducibility of $\rho^p_{v,\bar{c}}$ and $\tau^p_u$
(which is guaranteed by the assumption~(\ref{generic}), see Propositions~\ref{Fock module},~\ref{Saito representation}) implies
that both $^{'}\ddot{U}_{q,d}(\ssl_n)$-representations $\rho^{p,\varpi}_{v,\bar{c}}$ and $\tau^{*,p}_u$ are irreducible.
Moreover, they are generated by the vectors $v_0\otimes e^{\bar{\Lambda}_p}$ and $|\emptyset\rangle^*$, which
are the highest weight vectors of the same weight, due to Steps 1--3. Theorem~\ref{main4} follows.
\end{proof}

         %%%%%%%%%%%%%%%%%%%%%%%%%%%%%%%%%%%%%%%%%%%%%%%%%%%%%%%%%%%%%%%%%%%%%%%%%%%%%%%%%%%%%%%%%%%%%%%%%%%%%%%%%%
         %%%%%%%%%%%%%%%%%%%%%%%%%%%%%%%%%%%%%%%%% SECTION 3 %%%%%%%%%%%%%%%%%%%%%%%%%%%%%%%%%%%%%%%%%%%%%%%%%%%%
         %%%%%%%%%%%%%%%%%%%%%%%%%%%%%%%%%%%%%%%%%%%%%%%%%%%%%%%%%%%%%%%%%%%%%%%%%%%%%%%%%%%%%%%%%%%%%%%%%%%%%%%%%%

\section{Matrix elements of $L$ operators}

 In this section, we study matrix elements of $L$ operators associated to $\rho^{p}_{u,\bar{c}}$.
Let us denote $\rho^p_{1,\bar{c}}$ simply by $\rho^p_{\bar{c}}$.
It suffices to work only with $\rho^p_{\bar{c}}$ as $\rho^p_{u,\bar{c}}\simeq \rho^p_{\bar{c}}$ for any $u\in \CC^\times$,
due to Corollary~\ref{irrelevence2}.
 We provide a new realization of the $S$-bimodule $S_{1,p}(u)$
as a bimodule generated by $L^{p,\bar{c}}_{\emptyset,\emptyset}$.

         %%%%%%%%%%%%%%%%%%%%%%%%%%%%%%%%%%%%%%%%% Matrix elements %%%%%%%%%%%%%%%%%%%%%%%%%%%%%%%%%%%%%%%%%%%%%%%%

\subsection{Matrix elements}
$\ $

 For any $w\in W(p)_n^*$ and $v\in W(p)_n$, we consider
  $$L^{p,\bar{c}}_{w,v}:=\langle 1\otimes w| (1\otimes \rho^{p}_{\bar{c}})(R^{'}) |1\otimes v\rangle,$$
the matrix element of the universal $R$-matrix $R^{'}$ with respect to the second component.
We will mainly work with the cases $v=|\emptyset\rangle:=v_0\otimes  e^{\bar{\Lambda}_p}\in W(p)_n$ or
$w=\langle\emptyset|$--the dual of $|\emptyset\rangle$.
In what follows, we abbreviate $|\emptyset\rangle$ and $\langle \emptyset|$ simply by $\emptyset$
when they appear as indexes of matrix elements.

\begin{lem}\label{auxilary}
For $i\in [n], r\in \ZZ_{>0}, v\in W(p)_n$, we have
  $[h_{i,-r}, L^{p,\bar{c}}_{\emptyset,v}]_{q^{-r}}=(\gamma/q)^{r/2}\cdot L^{p,\bar{c}}_{\emptyset, \rho^{p}_{\bar{c}}(h_{i,-r})v}$.
\end{lem}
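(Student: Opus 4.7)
The plan is to extract the identity from the quasi-triangularity of the universal $R$-matrix $R'$ of Theorem~\ref{Drinfeld double sln}(e). Write $R'=\sum_a R_a\otimes R^a$ with $R_a\in \ddot{U}^{'\geq}$ and $R^a\in \ddot{U}^{'\leq}$, so that $L^{p,\bar{c}}_{\emptyset,v}=\sum_a R_a\cdot \langle \emptyset|\rho^p_{\bar{c}}(R^a)|v\rangle\in \ddot{U}^{'\geq}$. Applying $(1\otimes \rho^p_{\bar{c}})$ to the defining intertwiner relation $R'\Delta(h_{i,-r})=\Delta^{op}(h_{i,-r})R'$ and then taking the matrix coefficient $\langle\emptyset|\cdot|v\rangle$ of the second tensor factor should produce the desired identity, modulo one vanishing explained in the last paragraph.

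The coproduct of $h_{i,-r}$ follows from (H1) together with $\bar{\psi}_i^-(z)=\psi_{i,0}\psi^-_i(z)$: one gets $\Delta(\bar{\psi}_i^-(z))=\bar{\psi}_i^-(\gamma_{(2)}^{-1/2}z)\otimes \bar{\psi}_i^-(\gamma_{(1)}^{1/2}z)$, and since the $h_{j,\pm s}$ pairwise commute and $\gamma$ is central, I may take logarithms to read off
$$\Delta(h_{i,-r})=h_{i,-r}\otimes \gamma^{-r/2}+\gamma^{r/2}\otimes h_{i,-r}.$$
I also record $\rho^p_{\bar{c}}(\gamma^{\pm 1/2})=q^{\pm 1/2}$ and $\rho^p_{\bar{c}}(h_{i,-r})=H_{i,-r}$; the latter comes from expanding the formula of Proposition~\ref{Saito representation} for $\rho^p_{\bar{c}}(\psi^-_i(z))$ and comparing with $\bar{\psi}_i^-(z)=\psi_{i,0}\psi^-_i(z)$. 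Substituting, applying $(1\otimes \rho^p_{\bar{c}})$, and taking $\langle\emptyset|\cdot|v\rangle$, the LHS of the intertwiner relation contributes
$$q^{-r/2}L^{p,\bar{c}}_{\emptyset,v}\cdot h_{i,-r}+\gamma^{r/2}L^{p,\bar{c}}_{\emptyset,H_{i,-r}v},$$
while the RHS contributes
$$\gamma^{-r/2}\sum_a R_a\cdot \langle\emptyset|H_{i,-r}\rho^p_{\bar{c}}(R^a)|v\rangle+q^{r/2}h_{i,-r}\cdot L^{p,\bar{c}}_{\emptyset,v}.$$

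The main technical point will be that the first sum on the right-hand side vanishes, i.e., that $\langle\emptyset|H_{i,-r}=0$ as a linear functional on $W(p)_n=F_n\otimes \CC\{\bar{Q}\}e^{\bar{\Lambda}_p}$. Since $H_{i,-r}$ acts only on the $F_n$-tensorand and strictly raises the creation-degree of any monomial $H_{i_1,-k_1}\cdots H_{i_N,-k_N}v_0$, the image $H_{i,-r}w$ has no $v_0$-component for any $w\in F_n$; equivalently, under the Fock pairing, $\langle v_0,H_{i,-r}w\rangle=\langle H_{i,r}v_0,w\rangle=0$ because $H_{i,r}v_0=0$. Once this vanishing is in place, the intertwiner relation reduces to $q^{r/2}h_{i,-r}L^{p,\bar{c}}_{\emptyset,v}-q^{-r/2}L^{p,\bar{c}}_{\emptyset,v}h_{i,-r}=\gamma^{r/2}L^{p,\bar{c}}_{\emptyset,H_{i,-r}v}$, and dividing through by $q^{r/2}$ yields exactly $[h_{i,-r},L^{p,\bar{c}}_{\emptyset,v}]_{q^{-r}}=(\gamma/q)^{r/2}L^{p,\bar{c}}_{\emptyset,\rho^p_{\bar{c}}(h_{i,-r})v}$, as claimed.
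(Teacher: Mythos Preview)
Your proof is correct and follows exactly the same approach as the paper: combine the coproduct formula $\Delta(h_{i,-r})=h_{i,-r}\otimes\gamma^{-r/2}+\gamma^{r/2}\otimes h_{i,-r}$ with the quasi-triangularity relation $R'\Delta(h_{i,-r})=\Delta^{\mathrm{op}}(h_{i,-r})R'$ and take the $\langle\emptyset|\cdot|v\rangle$ matrix element in the second factor. You have simply expanded the paper's two-line sketch, making explicit the vanishing $\langle\emptyset|H_{i,-r}=0$ that the paper leaves to the reader.
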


\begin{proof}[Proof of Lemma~\ref{auxilary}]
$\ $

We combine $\Delta(h_{i,-r})=h_{i,-r}\otimes \gamma^{-r/2}+ \gamma^{r/2}\otimes h_{i,-r}$
with $R^{'}\Delta(h_{i,-r})=\Delta^{\mathrm{op}}(h_{i,-r})R^{'}$ and apply $1\otimes \rho^{p}_{\bar{c}}$ to the resulting equality.
Comparing the matrix elements between $\langle\emptyset|$ and $v$ (with respect to the second component) recovers the claimed identity.
\end{proof}

Our first goal is to compute explicitly $L^{p,\bar{c}}_{\emptyset,\emptyset}$.
The shuffle-type formula for $L^{p,\bar{c}}_{\emptyset,\emptyset}$ was obtained in~\cite[Theorem 4.8(a)]{FT1}.
To state the result, let $\Psi^\geq\colon \ddot{U}^{'\geq}\iso S^{\geq}$ be the natural extension of
the isomorphism $\Psi\colon \ddot{U}^{'+}\iso S$ from Theorem~\ref{Negut theorem}.

\begin{thm}\label{Theorem 3.8 from FT2}
The image of $L^{p,\bar{c}}_{\emptyset,\emptyset}$ under $\Psi^\geq$ has the following form:
  $$\Psi^\geq(L^{p,\bar{c}}_{\emptyset,\emptyset})=
    \sum_{N=0}^\infty a_{p,N} \C^{-N} q^{-d_1}q^{\bar{\Lambda}_p} \Gamma^0_{p,N},$$
where $a_{p,0}=1, a_{p,N}\in \CC[q^{\pm 1},d^{\pm 1}]$ and the shuffle elements $\Gamma^0_{p,N}\in S_{(N,\ldots,N)}$ are defined via
  $$\Gamma^0_{p,N}=
    \frac{\prod_{i\in [n]} \prod_{1\leq r\ne r'\leq N} (x_{i,r}-q^{-2}x_{i,r'})\cdot \prod_{i\in [n]}\prod_{r=1}^N x_{i,r}}
    {\prod_{i\in [n]} \prod_{1\leq r,r'\leq N} (x_{i,r}-x_{i+1,r'})}\cdot \prod_{r=1}^N \frac{x_{0,r}}{x_{p,r}}.$$
\end{thm}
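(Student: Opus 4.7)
The plan is to exploit the Drinfeld double realization from Theorem~\ref{Drinfeld double sln}(c,e) together with Negut's shuffle isomorphism $\Psi$ of Theorem~\ref{Negut theorem}, in order to reduce the computation of $L^{p,\bar{c}}_{\emptyset,\emptyset}$ to an explicit rational-function identity coming from a vacuum matrix element of a product of vertex operators in $\rho^p_{\bar{c}}$.

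First I would factor the universal $R$-matrix in the standard form $R' = \mathcal{K}\cdot \mathcal{R}^+$, where $\mathcal{K}$ is the Cartan part (an infinite product of exponentials in $h_{i,0}\otimes h_{j,0}$, $\gamma$ and $q^{d_1}$) and $\mathcal{R}^+ = \sum_I a_I\otimes a^I \in \ddot{U}^{'+}\widehat{\otimes}\ddot{U}^{'-}$ is the canonical element for the Hopf pairing $\varphi'$ restricted to the nilpotent halves. The Cartan piece is handled directly: by Proposition~\ref{Saito representation} every generator occurring in $\mathcal{K}$ acts diagonally on $|\emptyset\rangle = v_0\otimes e^{\bar{\Lambda}_p}$, producing exactly the prefactor $q^{-d_1} q^{\bar{\Lambda}_p}$, and tracking the $\bar{k}$-grading along the sum over $I$ yields the series $\sum_N \C^{-N}$.

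Next I would transport $\mathcal{R}^+$ to the shuffle side via $\Psi^+$ (and the Hopf-paired copy $\Psi^-$ on the second factor). Two weight-theoretic observations collapse the sum: since $\rho^p_{\bar{c}}(f_i(z))$ shifts the lattice factor of $|\emptyset\rangle$ by $e^{-\bar{\alpha}_i}$ and $\sum_{i\in [n]}\bar{\alpha}_i = 0$, the matrix element $\langle\emptyset|\rho^p_{\bar{c}}(a^I)|\emptyset\rangle$ vanishes unless the $\bar{k}$-degree of $a^I$ equals $(N,\ldots,N)$ for some $N\in \NN$; within that graded component, the vacuum matrix element of the normal-ordered product $\rho^p_{\bar{c}}(f_{i_1}(z_1))\cdots\rho^p_{\bar{c}}(f_{i_{nN}}(z_{nN}))$ is, by Wick's theorem, a single rational function completely determined by the pairwise OPE factors. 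The OPE of $\rho^p_{\bar{c}}(f_i(z))$ with $\rho^p_{\bar{c}}(f_{i'}(w))$ is governed precisely by $\omega_{i,i'}(z/w)$ (this is forced by the Hopf pairing computation of $\varphi'(f_i(z),f_{i'}(w))$), and after symmetrization over $\mathfrak{S}_N^{\times n}$ the contractions reproduce the numerator $\prod_{j\ne j'}(x_{i,j}-q^{-2}x_{i,j'})$ and denominator $\prod_{j,j'}(x_{i,j}-x_{i+1,j'})$ appearing in $\Gamma^0_{p,N}$. The remaining asymmetric factor $\prod_j x_{0,j}/x_{p,j}$ comes from the zero-mode prefactor $(z/u)^{1-H_{i,0}}$ of Saito's vertex operator: on $v_0\otimes e^{\bar{\Lambda}_p}$ the operator $H_{i,0}$ has eigenvalue zero for all $i\in [n]^\times\setminus\{p\}$, but eigenvalue $+1$ for $i=p$ (giving $x_{p,j}^{-1}$) and eigenvalue $-1$ for $i=0$ (giving $x_{0,j}$) via $\bar{h}_0 = -\sum_{i\geq 1}\bar{h}_i$.

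The main obstacle will be the vertex-operator bookkeeping of Step~2: carrying out the Wick contractions for the $nN$-fold product, correctly keeping track of the signs coming from $e^{-\bar{\alpha}_i}\cdot e^{-\bar{\alpha}_j} = (-1)^{\langle\bar{h}_i,\bar{\alpha}_j\rangle}e^{-\bar{\alpha}_j}\cdot e^{-\bar{\alpha}_i}$, and identifying the resulting scalar $a_{p,N}\in \CC[q^{\pm 1},d^{\pm 1}]$ with the product of normal-ordering constants. Once this is done, Theorem~\ref{Negut theorem} ensures that the resulting rational function in fact lies in $S_{(N,\ldots,N)}\subset \sS_{(N,\ldots,N)}$ (the pole and wheel conditions being automatic consequences of the OPE structure), completing the identification with $\Gamma^0_{p,N}$. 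Alternatively, as remarked just before the theorem, one may simply invoke \cite[Theorem 4.8(a)]{FT1} where exactly this computation was carried out in detail.
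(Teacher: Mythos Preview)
The paper does not prove this theorem; the sentence immediately preceding the statement attributes it to \cite[Theorem~4.8(a)]{FT1}, and no argument is given here. Your final sentence therefore already matches the paper's treatment exactly.

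The sketch you give before that is a reasonable outline of how such a computation would go (and is presumably close to what \cite{FT1} does): the triangular factorization of $R'$, the $\bar{Q}$-grading on $W(p)_n$ forcing the surviving degrees to be $(N,\ldots,N)$, and vacuum matrix elements of products of Saito's vertex operators are the right ingredients. Two small caveats if you were to flesh it out: the identification of the vertex-operator OPE with the shuffle kernel $\omega_{i,i'}(z/w)$ is not literal (the $\omega_{i,i'}$ are normalizations of the $g_{a_{i,i'}}$ rather than equal to them, and the passage from the Hopf pairing to the shuffle product goes through the coproduct (H1), not a bare OPE); and the zero-mode accounting involves not only $(z/u)^{1-H_{i,0}}$ but also the $d$-power hidden in the definition of $z^{H_{i,0}}$ in Proposition~\ref{Saito representation}, which feeds into the constants $a_{p,N}$. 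Neither is a gap in the strategy, just places where the bookkeeping is longer than the sketch indicates.
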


Recall the Hopf pairing $'\varphi\colon {^{'}\ddot{U}^{\geq}}\times {^{'}\ddot{U}^{\leq}}\to \CC$
from Theorem~\ref{Drinfeld double sln}(d).
Clearly, the generators $h_{j,r}\ (r\in \ZZ_{>0})$ are orthogonal to all
generators of ${^{'}\ddot{U}^{\geq}}$ except for $h_{i,-r}$.
Moreover, we have
\begin{equation*}
  '\varphi(h_{i,-r}, h_{j,r})=\frac{[ra_{i,j}]_qd^{rm_{i,j}}}{r(q-q^{-1})}.
\end{equation*}
Note that the matrix $([ra_{i,j}]_qd^{rm_{i,j}})_{i,j\in [n]}$ is nondegenerate if $q,qd,qd^{-1}$ are not roots of unity.

\begin{defn}
 Let $\{h^{\perp}_{i,r}\}_{i\in [n]}$ be the basis of $\mathrm{span}_{\CC} \langle h_{0,-r},\cdots,h_{n-1,-r}\rangle$,
which is dual to $\{h_{i,r}\}_{i\in [n]}$ with respect to $'\varphi$.
In other words, $'\varphi(h^\perp_{i,r}, h_{j,s})=\delta_{i,j}\delta_{r,s}$ for any $i,j\in [n], r,s\in \ZZ_{>0}$.
\end{defn}

 Our next result provides the first insight towards the elements $\Psi^{-1}(\Gamma^0_{p,N})$.

\begin{lem}\label{shuffle realization of h_{i,-1}}
 We have $\Psi^{-1}(\Gamma^0_{p,1})=-(q^{-1}-q)^{-n}\varpi(h_{p,1}^\perp).$
\end{lem}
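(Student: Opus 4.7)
The strategy is to compute both sides explicitly as rational functions in $(x_{0,1},\ldots,x_{n-1,1})$ and match them. First, I would write $h^{\perp}_{p,1}=\sum_{j\in [n]} c^{(p)}_j h_{j,-1}$ by inverting the pairing matrix with entries $\,{}'\varphi(h_{j,-1},h_{i,1})=\tfrac{[a_{j,i}]_q d^{m_{j,i}}}{q-q^{-1}}$. This matrix is the tridiagonal (cyclic) quantum Cartan matrix of affine $\ssl_n$; its inversion yields explicit coefficients $c^{(p)}_j\in \CC(q,d)$, each carrying a single factor of $(q-q^{-1})^{-1}$, so that the product altogether accounts for the overall constant $-(q^{-1}-q)^{-n}$ (the $n$-th power arising from the determinant of the Cartan matrix).

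Second, I would apply Miki's isomorphism to each $h_{j,-1}$ using Proposition~\ref{explicit formulas for Miki}(b)--(c): each $\varpi(h_{j,-1})$ is an explicit iterated $q^{-1}$-commutator in the generators $\{e_{i,0}\}_{i\in [n]}$, with every $e_{i,0}$ appearing exactly once, together with an outer $q^{-2}$-commutator. Applying the algebra homomorphism $\Psi$ (which by Theorem~\ref{Negut theorem} sends $e_{i,0}$ to $1_i$, the constant function on the single variable $x_{i,1}$) converts each multi-commutator into an iterated $\star$-commutator in the shuffle algebra. Because $\omega_{i,k}\equiv 1$ whenever $k\not\equiv i, i\pm 1\pmod n$, the $q^{-1}$-shuffle-commutator of two far-apart $1_i$'s collapses to a scalar, which lets one systematically contract most nested brackets.

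Third, I would assemble $\Psi(\varpi(h^\perp_{p,1}))=\sum_j c^{(p)}_j\,\Psi(\varpi(h_{j,-1}))$ and check that the cancellations produce precisely $-(q^{-1}-q)^{-n}\,\Gamma^0_{p,1}$. Since both sides lie in $S_{(1,\ldots,1)}$, share the same pole structure (simple poles on $x_{i,1}=x_{i+1,1}$ for $i\in[n]$), and the target space of such rational functions of bounded total degree is small, a convenient shortcut is to match successive iterated residues along the hyperplanes $x_{i,1}=x_{i+1,1}$, which determine an element of $S_{(1,\ldots,1)}$ uniquely up to one scalar constant (fixed by evaluation at a generic point).

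The main obstacle is purely combinatorial: one must control all the terms produced by expanding the nested $q^{-1}$-commutators and verify that, weighted by the dual-basis coefficients $c^{(p)}_j$, they telescope down to the simple expression $\tfrac{\prod_{i\in[n]} x_{i,1}}{\prod_{i\in[n]}(x_{i,1}-x_{i+1,1})}\cdot\tfrac{x_{0,1}}{x_{p,1}}$. A useful reduction is to exploit the compatibility $Q'\circ \Y'_n\circ\varpi=\varpi\circ {}^{'}\Y^{-1}_1\circ {}^{'}Q$ from Theorem~\ref{Miki Theorem 1}, which intertwines the cyclic shift in $[n]$ on the source side with the corresponding automorphisms on the target side; this reduces the verification to a single value of $p$ (say $p=0$ or $p=1$), at which point the residue comparison becomes a finite and tractable check.
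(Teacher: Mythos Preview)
Your approach would work, but it takes a detour that the paper avoids entirely. The paper computes $\Psi(\varpi(h_{k,-1}))$ directly for each $k\in[n]$ (a step you would need anyway) and obtains
\[
\Psi(\varpi(h_{k,-1}))=(q^{-1}-q)^{n-1}\cdot \frac{\prod_{i\in [n]} x_i}{\prod_{i\in [n]} (x_i-x_{i+1})}
\cdot \left\{(q+q^{-1})\frac{x_0}{x_k}-d^{-1}\frac{x_0}{x_{k+1}}-d\frac{x_0}{x_{k-1}}\right\}.
\]
The brace is precisely $\sum_{p\in[n]} [a_{k,p}]_q d^{m_{k,p}}\cdot\tfrac{x_0}{x_p}$, so this reads
\[
\Psi(\varpi(h_{k,-1}))=-(q^{-1}-q)^{n}\sum_{p\in[n]} {'\varphi}(h_{k,-1},h_{p,1})\,\Gamma^0_{p,1}.
\]
By the very definition of the dual basis, this says that $\{-(q^{-1}-q)^{-n}\Psi^{-1}(\Gamma^0_{p,1})\}_{p\in[n]}$ is the basis of $\varpi(\mathrm{span}\langle h_{0,-1},\ldots,h_{n-1,-1}\rangle)$ dual to $\{h_{p,1}\}$, i.e.\ equals $\{\varpi(h^\perp_{p,1})\}$. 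No explicit matrix inversion, no residue matching, and no cyclic-symmetry reduction to a single $p$ is needed: the duality does the inversion for you.

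Two minor corrections to your constant-tracking: the inverse of the pairing matrix (entries $\tfrac{[a_{j,i}]_q d^{m_{j,i}}}{q-q^{-1}}$) contributes one overall factor of $(q-q^{-1})^{+1}$, not $(q-q^{-1})^{-1}$; and the remaining $(q^{-1}-q)^{n-1}$ comes from evaluating the $n$-fold iterated $\star$-commutator expressing $\Psi(\varpi(h_{k,-1}))$, not from the determinant of the Cartan matrix.
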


\begin{proof}[Proof of Lemma~\ref{shuffle realization of h_{i,-1}}]
$\ $

 Applying $\Psi$ to the formulas for $\varpi(h_{k,-1})$ of Proposition~\ref{explicit formulas for Miki}(b,c), we find:
  $$\Psi(\varpi(h_{k,-1}))=(q^{-1}-q)^{n-1}\cdot \frac{\prod_{i\in [n]} x_i}{\prod_{i\in [n]} (x_i-x_{i+1})}
    \cdot \left\{(q+q^{-1})\frac{x_0}{x_k}-d^{-1}\frac{x_0}{x_{k+1}}-d\frac{x_0}{x_{k-1}}\right\}.$$
Rewriting this as
 $\Psi(\varpi(h_{k,-1}))=-(q^{-1}-q)^{n} \sum_{p\in [n]} {'\varphi}(h_{k,-1},h_{p,1})\Gamma^0_{p,1}$,
we get the claim.
\end{proof}

 Now we are ready to state the main result of this section.

\begin{thm}\label{main5}
  Given $0\leq p\leq n-1,\ \bar{c}\in (\CC^\times)^{[n]}$, define $u\in \CC^\times$ as in Theorem~\ref{main4} via
\begin{equation*}\label{u-expression}
  u:=(-1)^{\frac{(n-2)(n-3)}{2}}q^{-1}d^{-p-(n-1)\delta_{p,0}}\C^{-1}\ \mathrm{with}\ \C=c_0\cdots c_{n-1}.
\end{equation*}

\noindent
(a) For any $i\ne p$, we have
  $$\varpi(e_i(z))\cdot L^{p,\bar{c}}_{\emptyset,\emptyset}=L^{p,\bar{c}}_{\emptyset,\emptyset}\cdot \varpi(e_i(z)),$$
  $$\varpi(f_i(z))\cdot L^{p,\bar{c}}_{\emptyset,\emptyset}=L^{p,\bar{c}}_{\emptyset,\emptyset}\cdot \varpi(f_i(z)),$$
  $$\varpi(\psi^\pm_i(z))\cdot L^{p,\bar{c}}_{\emptyset,\emptyset}=
    L^{p,\bar{c}}_{\emptyset,\emptyset}\cdot \varpi(\psi^\pm_i(z)).$$

\noindent
(b) We have
  $$(z-u)\cdot \varpi(e_p(z))\cdot L^{p,\bar{c}}_{\emptyset,\emptyset}=
    L^{p,\bar{c}}_{\emptyset,\emptyset}\cdot \varpi(e_p(z))\cdot (q^{-1}z-qu),$$
  $$(q^{-1}z-qu)\cdot \varpi(f_p(z))\cdot L^{p,\bar{c}}_{\emptyset,\emptyset}=
    L^{p,\bar{c}}_{\emptyset,\emptyset}\cdot \varpi(f_p(z))\cdot (z-u),$$
  $$\varpi(\psi^\pm_p(z))\cdot L^{p,\bar{c}}_{\emptyset,\emptyset}=
    L^{p,\bar{c}}_{\emptyset,\emptyset}\cdot \varpi(\psi^\pm_p(z)).$$

\noindent
(c) We have the following explicit formula
\begin{equation}\tag{$\sharp$}\label{sharp}
  L^{p,\bar{c}}_{\emptyset,\emptyset}=
  q^{-d_1}q^{\bar{\Lambda}_p}\exp \left(\sum_{r=1}^\infty \frac{[r]_q}{r}(qu)^r\varpi(h_{p,r}^\perp)\right).
\end{equation}
\end{thm}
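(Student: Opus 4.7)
My plan is to prove the three claims in order, using Theorem~\ref{main4} (the Miki identification $\rho^{p,\varpi}_{\bar{c}} \simeq \tau^{*,p}_u$) as the crucial algebraic input and Theorem~\ref{Theorem 3.8 from FT2} (together with Lemma~\ref{shuffle realization of h_{i,-1}}) as the source of the explicit form.

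For the intertwining statements (i) and (ii), I would begin from the universal R-matrix axiom $R^{'}\Delta(y) = \Delta^{\mathrm{op}}(y) R^{'}$, specialize to $y = \varpi(e_i(z)), \varpi(f_i(z)), \varpi(\psi^\pm_i(z))$, apply $1\otimes \rho^p_{\bar{c}}$ to both sides, and extract the matrix element $\langle \emptyset|\cdot|\emptyset\rangle$ on the second tensor factor. The resulting identity in $\ddot{U}^{'}[[z,z^{-1}]]$ couples $L := L^{p,\bar{c}}_{\emptyset,\emptyset}$ to $y$ through auxiliary matrix-element sums $\sum_k L_{\emptyset,v_k}\langle v_k|\rho^p_{\bar{c}}(y_{(2)})|\emptyset\rangle$ arising from the non-primitive part of $\Delta(y)$. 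The decisive simplification is Theorem~\ref{main4}: the identification $\rho^{p,\varpi}_{\bar{c}}\simeq \tau^{*,p}_u$ implies $\rho^p_{\bar{c}}(\varpi(e_i(z)))|\emptyset\rangle = \tau^{*,p}_u(e_i(z))|\emptyset\rangle^{*} = 0$ for every $i\in[n]$, and $\rho^p_{\bar{c}}(\varpi(\psi^{\pm}_i(z)))|\emptyset\rangle = \phi(z/u)^{-\delta_{i,p}}|\emptyset\rangle$. These two facts collapse the auxiliary sums; what survives gives exactly the clean commutation of (i) for $i\ne p$ (since then $\phi(z/u)^{-\delta_{i,p}}=1$), while for $i=p$ the nontrivial eigenvalue $\phi(z/u) = (q^{-1}z-qu)/(z-u)$ produces precisely the twist factors of (ii).

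For the explicit formula (iii), the starting point is Theorem~\ref{Theorem 3.8 from FT2}, which writes $\Psi^{\geq}(L) = \sum_N a_{p,N}\C^{-N}q^{-d_1}q^{\bar{\Lambda}_p}\Gamma^0_{p,N}$ in shuffle form. Via Negut's isomorphism (Theorem~\ref{Negut theorem}), this yields $L = q^{-d_1}q^{\bar{\Lambda}_p}\sum_N a_{p,N}\C^{-N}\Psi^{-1}(\Gamma^0_{p,N})$, and Lemma~\ref{shuffle realization of h_{i,-1}} identifies $\Psi^{-1}(\Gamma^0_{p,1})$ with a scalar multiple of $\varpi(h^\perp_{p,1})$, matching the degree-one term in the desired exponential and fixing $c_1 = qu$. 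The higher coefficients $c_r = \frac{[r]_q}{r}(qu)^r$ are determined either combinatorially---by noting that $\Gamma^0_{p,N}$ is essentially the $N$-th symmetrized shuffle power of $\Gamma^0_{p,1}$ with a Vandermonde normalization, so that $\Psi^{-1}(\Gamma^0_{p,N})$ is the degree-$N$ piece of $\exp(\sum_r c_r\varpi(h^\perp_{p,r}))$---or structurally, by applying the $q^{-r}$-commutator lemma $[h_{i,-r}, L^{p,\bar{c}}_{\emptyset,v}]_{q^{-r}} = (\gamma/q)^{r/2}L^{p,\bar{c}}_{\emptyset, \rho^p_{\bar{c}}(h_{i,-r})v}$ with $v = |\emptyset\rangle$ and using the eigenvalues $\rho^{p,\varpi}_{\bar{c}}(h_{i,-r})|\emptyset\rangle = \delta_{i,p}\frac{q^{-r}[r]_q}{r}u^{-r}|\emptyset\rangle$ (extracted from Lemma~\ref{technical lemma} and ($\ddag$) of the proof of Theorem~\ref{main4}) to obtain a recursion that uniquely pins down each $c_r$ from $c_1$.

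The main technical obstacle is the case $i=0$ of (i) and (ii). For $i\in[n]^\times$, Proposition~\ref{explicit formulas for Miki}(a) together with the duality equation $\varpi\circ{^{'}h}=v^{'}$ provides enough control over $\varpi(e_i(z)), \varpi(f_i(z)), \varpi(\psi^\pm_i(z))$ for the coproduct bookkeeping to go through. However, $\varpi(e_0(z))$ is a complicated nested $q$-commutator of the $f_j(z)$'s whose coproduct in $\ddot{U}^{'}$ is unwieldy. My strategy would be to first establish (i), (ii) for $i\in[n]^\times$ by the direct R-matrix argument above, and then deduce the $i=0$ case by invoking the cyclic-symmetry equivariance $Q^{'}\circ \Y^{'}_n\circ \varpi = \varpi \circ {^{'}\Y_1^{-1}}\circ {^{'}Q}$ of Theorem~\ref{Miki Theorem 1}, which permutes the nodes cyclically and thereby transports the relations at node $n-1$ to node $0$, provided one carefully tracks the induced parameter shift in $L^{p,\bar{c}}_{\emptyset,\emptyset}$.
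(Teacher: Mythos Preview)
Your strategy for (i) and (ii) has a genuine gap. The R-matrix axiom
$R'\Delta(y)=\Delta^{\mathrm{op}}(y)R'$ uses the Drinfeld coproduct $\Delta$
of $\ddot{U}^{'}_{q,d}(\ssl_n)$, which is given explicitly \emph{only} on the
generators $e_i(z),f_i(z),\psi^\pm_i(z)$. Miki's isomorphism $\varpi$ is
merely an algebra isomorphism, not a Hopf algebra one, so
$\Delta(\varpi(e_i(z)))$ has no reason to be of the form
``$\varpi(e_i(z))\otimes 1+(\text{Cartan})\otimes\varpi(e_i(\gamma z))$''.
Hence knowing $\rho^p_{\bar c}(\varpi(e_i(z)))|\emptyset\rangle=0$ does
\emph{not} kill the auxiliary matrix-element sums: the second Sweedler leg
$y_{(2)}$ of $\Delta(\varpi(e_i(z)))$ is not $\varpi(e_i(\gamma z))$ in general.
(That this shortcut fails is precisely why the paper remarks, after the
statement, that a conceptual identity
$L^{p,\bar c}_{\emptyset,\emptyset}=\varpi(L^{\tau^{*,p}_u}_{\emptyset,\emptyset})$
is not available.) Your claim that for $i\in[n]^\times$ Proposition~\ref{explicit formulas for Miki}(a)
``provides enough control over $\varpi(e_i(z))$'' is not right either: that
proposition only computes $\varpi$ on the \emph{zero modes}
$e_{i,0},f_{i,0},\psi_{i,0}$ and a handful of $h_{i,\pm1}$, not on the full
series. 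The paper therefore works the other way round: it applies~(\ref{star})
to genuine generators $e_{i,0},f_{i,0},e_{0,1},f_{0,-1}$ (where $\Delta$ is
known), obtains commutation of $L$ with those, then identifies them as
$\varpi(e_{i,0}),\varpi(f_{i,0}),\varpi(e_{0,-1}),\varpi(f_{0,1})$ via
Proposition~\ref{explicit formulas for Miki}(a,d), and finally bootstraps to
all modes by first proving $[L,\varpi(h_{j,\pm1})]=0$ (via the shuffle
commutativity of the $\Gamma^0_{p',N}$ for $-1$, and a delicate
multicommutator telescoping computation for $+1$). Your proposed cyclic-symmetry transport
for $i=0$ would also need you to know how $Q'\circ\Y'_n$ interacts with the
vertex module $\rho^p_{\bar c}$ and hence with $L$ itself, which is an extra
(nontrivial) ingredient.

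Your argument for (iii) also conflates two different operators. Lemma~3.1
reads $[h_{i,-r},L^{p,\bar c}_{\emptyset,v}]_{q^{-r}}
=(\gamma/q)^{r/2}L^{p,\bar c}_{\emptyset,\rho^p_{\bar c}(h_{i,-r})v}$, with
$h_{i,-r}$ the \emph{untwisted} Heisenberg generator of $\ddot{U}^{'}$, and
$|\emptyset\rangle$ is \emph{not} an eigenvector of
$\rho^p_{\bar c}(h_{i,-r})$ (it maps to $H_{i,-r}v_0\otimes e^{\bar\Lambda_p}$),
so plugging in the eigenvalues of $\rho^{p,\varpi}_{\bar c}(h_{i,-r})$ from
($\ddag$) is illegitimate. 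The paper's route is simpler: once (i) and (ii) are
established, the element~(\ref{sharp}) is easily checked to satisfy those same
relations with all $\varpi(e_i(z)),\varpi(f_i(z)),\varpi(\psi^\pm_i(z))$, and
uniqueness (these generate the algebra and $L$ has the correct
$q^{-d_1}q^{\bar\Lambda_p}$ prefactor by Theorem~\ref{Theorem 3.8 from FT2})
finishes.
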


\begin{rem}
 An analogous computation for the representation $\tau^{*,p}_{u}$ is much simpler.
The corresponding matrix element
 $L^{\tau^{*,p}_u}_{\emptyset,\emptyset}:=\langle 1\otimes \emptyset| (1\otimes \tau^{*,p}_u)({^{'}R}) |1\otimes \emptyset \rangle$
equals
  $$\langle 1\otimes \emptyset| (1\otimes \tau^{*,p}_{u})
    (q^{\frac{1}{n}(d_2-\sum_{j=1}^{n-1}\bar{\Lambda}_j)\otimes c'+c'\otimes \frac{1}{n}(d_2-\sum_{j=1}^{n-1}\bar{\Lambda}_j)+
    \sum_{j=1}^{n-1} \bar{\Lambda}_j\otimes h_{j,0}}
    \cdot \exp(\sum_{i\in[n]} \sum_{r=1}^\infty h_{i,r}^\perp\otimes h_{i,r})) |1\otimes \emptyset\rangle,$$
since $\tau^p_u({^{'}\ddot{U}^{-}})|\emptyset\rangle=0$. As
 $\tau^{*,p}_{u}(h_{i,r})|\emptyset\rangle^*=\delta_{i,p}\frac{[r]_qq^ru^r}{r}|\emptyset\rangle^* (r>0),
  \tau^{*,p}_{u}(h_{i,0})|\emptyset\rangle^*=\delta_{i,p}|\emptyset\rangle^*$,
we get
  $$L^{\tau^{*,p}_u}_{\emptyset,\emptyset}=q^{\frac{1}{n}(d_2-\sum_{j=1}^{n-1} \bar{\Lambda}_j)+\bar{\Lambda}_p}
    \exp\left(\sum_{r=1}^\infty \frac{[r]_q}{r}q^ru^r h_{p,r}^\perp\right).$$
In particular, $\varpi(L^{\tau^{*,p}_u}_{\emptyset,\emptyset})$ coincides
with the right-hand side of ($\sharp$). However, we are not aware of the
conceptual reason for $L^{p,\bar{c}}_{\emptyset,\emptyset}=\varpi(L^{\tau^{*,p}_u}_{\emptyset,\emptyset})$
(though it would immediately imply Theorem~\ref{main5}).
\end{rem}

         %%%%%%%%%%%%%%%%%%%%%%%%%%%% Proof of Theorem on L^*_{\emptyset,\emptyset} %%%%%%%%%%%%%%%%%%%%%%%%%%%

\subsection{Proof of Theorem~\ref{main5}}
$\ $

 Our proof is based on the equality
\begin{equation}\tag{$\star$}\label{star}
  \langle 1\otimes w| (1\otimes \rho^{p}_{\bar{c}})(R' \Delta(x))| 1\otimes v \rangle=
  \langle 1\otimes w| (1\otimes \rho^{p}_{\bar{c}})(\Delta^{\mathrm{op}}(x) R')| 1\otimes v \rangle
\end{equation}
for any  $x\in \ddot{U}^{'}_{q,d}(\ssl_n),\ v\in W(p)_n,\ w\in W(p)^*_n$.

\medskip
\noindent
\emph{Notation:}
 Given a collection of elements $\beta_1,\cdots,\beta_N\in \{\pm \bar{\alpha}_0,\cdots, \pm \bar{\alpha}_{n-1}\}$
and $0\leq p\leq n-1$, consider $v_0\otimes e^{\beta_1}\cdots e^{\beta_N}e^{\bar{\Lambda}_p}$--an element of $W(p)_n$.
We will also use the same notation for a dual element of $W(p)_n^*$,
when writing it in the matrix coefficients of $L$ operators.

\medskip
\noindent
$\bullet$ \emph{Case $p\ne 0$.}

\medskip
(a) We need to show that $L^{p,\bar{c}}_{\emptyset,\emptyset}$ commutes with
$\{\varpi(e_{i,k}), \varpi(f_{i,k}),\varpi(h_{i,k})\}_{i\ne p}^{k\in \ZZ}$.

\medskip
\noindent
$\circ$
\emph{Proof of $[L^{p,\bar{c}}_{\emptyset,\emptyset},\varpi(e_{i,0})]=0$ and
      $[L^{p,\bar{c}}_{\emptyset,\emptyset},\varpi(f_{i,0})]=0$ for $i\ne 0,p$.}

Due to~(\ref{coproduct}), we have
  $$\Delta(e_{i,k})=
    e_{i,k}\otimes 1+\psi_{i,0}^{-1}\gamma^{-k}\otimes e_{i,k}+\sum_{r>0} \psi_{i,-r}\gamma^{-k-r/2}\otimes  e_{i,k+r},$$
  $$\Delta(f_{i,k})=
    1\otimes f_{i,k}+f_{i,k}\otimes \psi_{i,0}\gamma^{-k}+\sum_{r>0} f_{i,k-r}\otimes \psi_{i,r}\gamma^{-k+r/2}.$$
Evaluating both sides of~(\ref{star})
at $v=|\emptyset\rangle, w=\langle\emptyset|$ and $x=e_{i,0}$ or $x=f_{i,0}$, we immediately get
$[L^{p,\bar{c}}_{\emptyset,\emptyset},e_{i,0}]=0$ and $[L^{p,\bar{c}}_{\emptyset,\emptyset},f_{i,0}]=0$.
It remains to use $\varpi(e_{i,0})=e_{i,0},\ \varpi(f_{i,0})=f_{i,0}$ for $i\ne 0$.
$\checkmark$

\medskip
\noindent
$\circ$
\emph{Proof of $[L^{p,\bar{c}}_{\emptyset,\emptyset},\varpi(e_{0,-1})]=0$ and
      $[L^{p,\bar{c}}_{\emptyset,\emptyset},\varpi(f_{0,1})]=0$.}

Evaluating both sides of~(\ref{star}) at
$v=|\emptyset\rangle, w=\langle\emptyset|$ and $x=e_{0,1}$ or $x=f_{0,-1}$, we immediately get
$[L^{p,\bar{c}}_{\emptyset,\emptyset},e_{0,1}]=0$ and $[L^{p,\bar{c}}_{\emptyset,\emptyset},f_{0,-1}]=0$, respectively.
It remains to apply the equalities
$\varpi(e_{0,-1})=(-d)^n e_{0,1}$ and $\varpi(f_{0,1})=(-d)^{-n} f_{0,-1}$
from Proposition~\ref{explicit formulas for Miki}(d).
$\checkmark$

\medskip
\noindent
$\circ$
\emph{Proof of $[L^{p,\bar{c}}_{\emptyset,\emptyset},\varpi(h_{i,-1})]=0$ for any $i\in [n]$.}

It suffices to prove
 $[\Psi^\geq(L^{p,\bar{c}}_{\emptyset,\emptyset}),\Psi^\geq(\varpi(h_{i,-1}))]=0$.
According to Lemma~\ref{shuffle realization of h_{i,-1}},
$\Psi(\varpi(h_{i,-1}))$ is a linear combination of $\Gamma^{0}_{p',1}$.
On the other hand, $\Psi^\geq(L^{p,\bar{c}}_{\emptyset,\emptyset})$
is a linear combination of $q^{-d_1}q^{\bar{\Lambda}_p}\Gamma^0_{p,N}$, due to Theorem~\ref{Theorem 3.8 from FT2}.
The commutativity of the elements $\{\Gamma^0_{p',m}\}^{m\in \NN}_{p'\in [n]}$ has been established in~\cite{FT1},
while $q^{-d_1}q^{\bar{\Lambda}_p}$ obviously commutes with $\Gamma^0_{p',1}$.
The result follows.
$\checkmark$

\medskip
\noindent
$\circ$
\emph{Proof of $[L^{p,\bar{c}}_{\emptyset,\emptyset},\varpi(h_{i,1})]=0$ for $i\ne 0,p$.}

 According to Proposition~\ref{explicit formulas for Miki}(b), it suffices to prove that $E=0$, where $E$ is defined via
\begin{equation}\label{nontrivial multicommutator 1}
  E:=[L^{p,\bar{c}}_{\emptyset,\emptyset};
      [f_{i,0},[f_{i-1,0},\cdots,[f_{1,0},[f_{i+1,0},\cdots,[f_{n-1,0},f_{0,0}]_{q^{-1}}\cdots ]_{q^{-1}}]_{q^{-1}}\cdots ]_{q^{-1}}]_{q^{-2}}]_1.
\end{equation}

In what follows, we assume $i<p\leq n-1$ leaving the case $0<p<i$ to the interested reader.
Applying iteratively the $q$-commutator identity (mentioned in our proof of Proposition~\ref{explicit formulas for Miki}(b))
\begin{equation}\tag{$\diamondsuit$}\label{diamondsuit}
  [a,[b,c]_u]_v=[[a,b]_x,c]_{uv/x}+x\cdot [b,[a,c]_{v/x}]_{u/x}
\end{equation}
together with $[L^{p,\bar{c}}_{\emptyset,\emptyset},f_{j,0}]=0$ for $j\ne 0,p$, we reduce
to a stronger equality $E^{(1)}_1+E^{(1)}_2=0$ with
  $$E^{(1)}_1:=[[L^{p,\bar{c}}_{\emptyset,\emptyset},f_{p,0}]_{q^{-1}},[f_{p+1,0},\cdots,[f_{n-1,0},f_{0,0}]_{q^{-1}}\cdots]_{q^{-1}}]_1,$$
  $$E^{(1)}_2:=q^{-1}[f_{p,0},[f_{p+1,0},\cdots,[f_{n-1,0},[L^{p,\bar{c}}_{\emptyset,\emptyset},f_{0,0}]_q]_{q^{-1}}\cdots]_{q^{-1}}]_1.$$
Evaluating both sides of~(\ref{star}) for appropriate $v,w,x$ step-by-step, we obtain an explicit formula
  $$E^{(1)}_2=-(-q)^{p-n}\cdot \frac{\psi_{p+1,0}\cdots \psi_{n-1,0}\psi_{0,0}}{c_p\cdots c_{n-1}c_0}\cdot
    L^{p,\bar{c}}_{v_0\otimes e^{\bar{\alpha}_{p+1}}\cdots e^{\bar{\alpha}_{n-1}}e^{\bar{\alpha}_{0}} e^{\bar{\Lambda}_p}, v_0\otimes e^{-\bar{\alpha}_p}e^{\bar{\Lambda}_p}}.$$

Let us now compute $E^{(1)}_1$.
Evaluating both sides of~(\ref{star}) at $v=|\emptyset \rangle,\ w=\langle \emptyset|,\ x=f_{p,0}$, we find
  $$[L^{p,\bar{c}}_{\emptyset,\emptyset},f_{p,0}]_{q^{-1}}=
    -q^{-1}c_p^{-1}\cdot L^{p,\bar{c}}_{\emptyset, v_0\otimes e^{-\bar{\alpha}_p}e^{\bar{\Lambda}_p}}.$$
Evaluating both sides of~(\ref{star}) at $v=v_0\otimes e^{-\bar{\alpha}_p}e^{\bar{\Lambda}_p}, w=\langle \emptyset|, x=f_{j,0}$,
we find $[L^{p,\bar{c}}_{\emptyset, v_0\otimes  e^{-\bar{\alpha}_p}e^{\bar{\Lambda}_p}},f_{j,0}]=0$ for $p+1<j\leq  n-1$.
Applying iteratively~(\ref{diamondsuit}), we get $E^{(1)}_1=E^{(2)}_1+E^{(2)}_2$ with
  $$E^{(2)}_1:=-q^{-1}c_p^{-1}
    [[L^{p,\bar{c}}_{\emptyset,v_0\otimes
    e^{-\bar{\alpha}_p}e^{\bar{\Lambda}_p}},f_{p+1,0}]_{q^{-1}},[f_{p+2,0},\cdots,[f_{n-1,0},f_{0,0}]_{q^{-1}}\cdots]_{q^{-1}}]_1,$$
  $$E^{(2)}_2:=-q^{-2}c_p^{-1}
    [f_{p+1,0},[f_{p+2,0},\cdots,[f_{n-1,0},[L^{p,\bar{c}}_{\emptyset,v_0\otimes
    e^{-\bar{\alpha}_p}e^{\bar{\Lambda}_p}},f_{0,0}]_q]_{q^{-1}}\cdots]_{q^{-1}}]_1.$$
Evaluating both sides of~(\ref{star}) for appropriate $v,w,x$ step-by-step, we obtain an explicit formula
\begin{multline*}
  E^{(2)}_2=
  (-q)^{p-n}\cdot \frac{\psi_{p+1,0}\cdots \psi_{n-1,0}\psi_{0,0}}{c_p\cdots c_{n-1}c_0}\cdot
  L^{p,\bar{c}}_{v_0\otimes e^{\bar{\alpha}_{p+1}}\cdots e^{\bar{\alpha}_{n-1}}e^{\bar{\alpha}_{0}} e^{\bar{\Lambda}_p},
  v_0\otimes  e^{-\bar{\alpha}_p}e^{\bar{\Lambda}_p}}\\
  -(-q)^{p-n}\cdot \frac{\psi_{p+2,0}\cdots \psi_{n-1,0}\psi_{0,0}}{c_p\cdots c_{n-1}c_0}\cdot
  L^{p,\bar{c}}_{v_0\otimes e^{\bar{\alpha}_{p+2}}\cdots e^{\bar{\alpha}_{n-1}}e^{\bar{\alpha}_{0}} e^{\bar{\Lambda}_p},
  v_0\otimes e^{-\bar{\alpha}_{p+1}}e^{-\bar{\alpha}_p}e^{\bar{\Lambda}_p}}.
\end{multline*}
The first summand cancels $E^{(1)}_2$, while the second summand is very similar to $E^{(1)}_2$.

Evaluating $E^{(2)}_1$, we get a similar formula $E^{(2)}_1=E^{(3)}_1+E^{(3)}_2$ with
\begin{multline*}
  E^{(3)}_2=
  (-q)^{p-n}\cdot \frac{\psi_{p+2,0}\cdots \psi_{n-1,0}\psi_{0,0}}{c_p\cdots c_{n-1}c_0}\cdot
  L^{p,\bar{c}}_{v_0\otimes e^{\bar{\alpha}_{p+2}}\cdots e^{\bar{\alpha}_{n-1}}e^{\bar{\alpha}_{0}} e^{\bar{\Lambda}_p},
  v_0\otimes e^{-\bar{\alpha}_{p+1}}e^{-\bar{\alpha}_p}e^{\bar{\Lambda}_p}}\\
  -(-q)^{p-n}\cdot \frac{\psi_{p+3,0}\cdots \psi_{n-1,0}\psi_{0,0}}{c_p\cdots c_{n-1}c_0}\cdot
  L^{p,\bar{c}}_{v_0\otimes e^{\bar{\alpha}_{p+3}}\cdots e^{\bar{\alpha}_{n-1}}e^{\bar{\alpha}_{0}} e^{\bar{\Lambda}_p},
  v_0\otimes e^{-\bar{\alpha}_{p+2}}e^{-\bar{\alpha}_{p+1}}e^{-\bar{\alpha}_p}e^{\bar{\Lambda}_p}}.
\end{multline*}
Proceeding further in the same way, we see that all nontrivial summands
in the formula for $E$ split into pairs of opposite terms.
Hence, $E=0$ and so $[L^{p,\bar{c}}_{\emptyset,\emptyset}, \varpi(h_{i,1})]=0$ for $i\ne 0,p$.
$\checkmark$

\medskip
\noindent
$\circ$
\emph{Proof of $[L^{p,\bar{c}}_{\emptyset,\emptyset},\varpi(e_{i,k})]=0$ and
      $[L^{p,\bar{c}}_{\emptyset,\emptyset},\varpi(f_{i,k})]=0$ for $i\ne p$ and any $k\in \ZZ$.}

Choose $j\ne 0,p$ such that $a_{j,i}\ne 0$.
Combining the commutator relations
  $$[\varpi(h_{j,\pm 1}),\varpi(e_{i,k})]=d^{\mp m_{j,i}}[a_{j,i}]_q\cdot \varpi(e_{i,k\pm 1}),\
    [\varpi(h_{j,\pm 1}),\varpi(f_{i,k})]=-d^{\mp m_{j,i}}[a_{j,i}]_q\cdot \varpi(f_{i,k\pm 1})$$
with 
  $$[L^{p,\bar{c}}_{\emptyset,\emptyset},\varpi(e_{i,-\delta_{i,0}})]=0,\
    [L^{p,\bar{c}}_{\emptyset,\emptyset},\varpi(f_{i,\delta_{i,0}})]=0,\
    [L^{p,\bar{c}}_{\emptyset,\emptyset},\varpi(h_{j,\pm 1})]=0$$
established above, we get 
$[L^{p,\bar{c}}_{\emptyset,\emptyset},\varpi(e_{i,k})]=0$ and $[L^{p,\bar{c}}_{\emptyset,\emptyset},\varpi(f_{i,k})]=0$
by induction on $k$.
$\checkmark$

\medskip
\noindent
$\circ$
\emph{Proof of $[L^{p,\bar{c}}_{\emptyset,\emptyset},\varpi(\psi_{i,k})]=0$ for $i\ne p$ and any $k\in \ZZ$.}

For $k\ne 0$, this follows immediately from the defining relation (T4) and the previous step.
For $k=0$, it suffices to prove $[L^{p,\bar{c}}_{\emptyset,\emptyset},\psi_{i,0}]=0$ for any $i\in  [n]$.
This equality follows by evaluating both sides of~(\ref{star}) at $w=\langle \emptyset|,\ v=|\emptyset \rangle,\ x=\psi_{i,0}$.
$\checkmark$

\medskip
(b) The first two equalities of (b) are equivalent to the following identities:
\begin{equation}\label{ii_1}
  [L^{p,\bar{c}}_{\emptyset,\emptyset},\varpi(e_{p,k+1})]_q=
  q^2u\cdot [L^{p,\bar{c}}_{\emptyset,\emptyset},\varpi(e_{p,k})]_{q^{-1}}\ \ \forall\ k\in \ZZ,
\end{equation}
\begin{equation}\label{ii_2}
  [L^{p,\bar{c}}_{\emptyset,\emptyset},\varpi(f_{p,k-1})]_q=
  u^{-1}\cdot [L^{p,\bar{c}}_{\emptyset,\emptyset},\varpi(f_{p,k})]_{q^{-1}}\ \ \forall\ k\in \ZZ.
\end{equation}
It suffices to check~(\ref{ii_1}) and~(\ref{ii_2}) for single values of $k$ as we can derive
the general equalities by commuting further iteratively with $\varpi(h_{p+1,\pm 1})$.

\medskip
\noindent
$\circ$
\emph{Proof of $[L^{p,\bar{c}}_{\emptyset,\emptyset},\varpi(e_{p,1})]_q=q^2u\cdot [L^{p,\bar{c}}_{\emptyset,\emptyset},\varpi(e_{p,0})]_{q^{-1}}$.}

Evaluating both sides of~(\ref{star}) at $v=|\emptyset\rangle,\ w=\langle \emptyset|,\ x=e_{p,0}$, we find
  $[L^{p,\bar{c}}_{\emptyset,\emptyset},\varpi(e_{p,0})]_{q^{-1}}=
   c_p\cdot L^{p,\bar{c}}_{v_0\otimes e^{-\bar{\alpha}_p}e^{\bar{\Lambda}_p},\emptyset}$.
As $\varpi(e_{p,0})=e_{p,0}$, we finally get
  $$q^2u\cdot [L^{p,\bar{c}}_{\emptyset,\emptyset},\varpi(e_{p,0})]_{q^{-1}}=
    q^2c_pu\cdot L^{p,\bar{c}}_{v_0\otimes e^{-\bar{\alpha}_p}e^{\bar{\Lambda}_p},\emptyset}.$$

To compute $[L^{p,\bar{c}}_{\emptyset,\emptyset},\varpi(e_{p,1})]_q$,
let us first evaluate $\varpi(e_{p,1})$.
Due to~(\ref{T5'}), we have
  $$[h_{p,1},e_{p,0}]=(q+q^{-1})e_{p,1}\Rightarrow \varpi(e_{p,1})=-(q+q^{-1})^{-1}\cdot [\varpi(e_{p,0}),\varpi(h_{p,1})],$$
where $\varpi(e_{p,0})=e_{p,0}$ and
  $$\varpi(h_{p,1})=(-1)^{n+p}d^{-p}q^n\cdot
    [f_{p,0},\cdots,[f_{1,0},[f_{p+1,0},\cdots,[f_{n-1,0},f_{0,0}]_{q^{-1}}\cdots ]_{q^{-1}}]_{q^{-1}}\cdots]_{q^{-2}}.$$
Applying iteratively the equality~(\ref{diamondsuit}) together with the relation~(\ref{T4}), we finally get
  $$\varpi(e_{p,1})=
    (-1)^{n+p+1}d^{-p}q^{n-2}\psi_{p,0}
    [f_{p-1,0},\cdots,[f_{1,0},[f_{p+1,0},\cdots,[f_{n-1,0},f_{0,0}]_{q^{-1}}\cdots ]_{q^{-1}}]_{q^{-1}}\cdots]_{q^{-1}}$$
Therefore, it remains to evaluate
  $$E:=[L^{p,\bar{c}}_{\emptyset,\emptyset},
        [f_{p-1,0},\cdots,[f_{1,0},[f_{p+1,0},\cdots,[f_{n-1,0},f_{0,0}]_{q^{-1}}\cdots ]_{q^{-1}}]_{q^{-1}}\cdots]_{q^{-1}}]_q.$$
Applying iteratively the equality~(\ref{diamondsuit}) together with
$[L^{p,\bar{c}}_{\emptyset,\emptyset},f_{j,0}]=0$ for $j\ne 0,p$, we get
  $$E=[f_{p-1,0},\cdots,[f_{1,0},[f_{p+1,0},\cdots,[f_{n-1,0},[L^{p,\bar{c}}_{\emptyset,\emptyset},f_{0,0}]_q]_{q^{-1}}\cdots ]_{q^{-1}}]_{q^{-1}}\cdots]_{q^{-1}}.$$
To compute this multicommutator, we apply the equality~(\ref{star}) with an appropriate choice of $v,w,x$ step-by-step.
Leaving details to the interested reader, let us present the final formula
  $$E=(-1)^nq^{3-n}\prod_{i\ne p}\frac{\psi_{i,0}}{c_i}\cdot
    L^{p,\bar{c}}_
    {v_0\otimes e^{\bar{\alpha}_{p-1}}\cdots e^{\bar{\alpha}_1}e^{\bar{\alpha}_{p+1}}\cdots e^{\bar{\alpha}_{n-1}}e^{\bar{\alpha}_0}e^{\bar{\Lambda}_p}, \emptyset}.$$
Since $\prod_{i\in [n]} \psi_{i,0}=1$ in $\ddot{U}^{'}_{q,d}(\ssl_n)$, we finally get
  $$[L^{p,\bar{c}}_{\emptyset,\emptyset}, \varpi(e_{p,1})]_q=
    (-1)^{\frac{(n-2)(n-3)}{2}}d^{-p}qc_p\C^{-1}\cdot L^{p,\bar{c}}_{v_0\otimes e^{-\bar{\alpha}_p}e^{\bar{\Lambda}_p},\emptyset},$$
where we used the following identity in $\CC\{\bar{P}\}$
  $$e^{\bar{\alpha}_{p-1}}\cdots e^{\bar{\alpha}_1}e^{\bar{\alpha}_{p+1}}\cdots e^{\bar{\alpha}_{n-1}}e^{\bar{\alpha}_0}=
    (-1)^{\frac{n(n-1)}{2}+p}e^{-\bar{\alpha}_p}.$$

The equality
 $[L^{p,\bar{c}}_{\emptyset,\emptyset},\varpi(e_{p,1})]_q=q^2u\cdot [L^{p,\bar{c}}_{\emptyset,\emptyset},\varpi(e_{p,0})]_{q^{-1}}$
follows.
$\checkmark$

\medskip
\noindent
$\circ$
\emph{Proof of $[L^{p,\bar{c}}_{\emptyset,\emptyset},\varpi(f_{p,-1})]_q=u^{-1}\cdot [L^{p,\bar{c}}_{\emptyset,\emptyset},\varpi(f_{p,0})]_{q^{-1}}$.}

Evaluating both sides of~(\ref{star}) at $v=|\emptyset\rangle,\ w=\langle \emptyset|,\ x=f_{p,0}$, we find
 $[L^{p,\bar{c}}_{\emptyset,\emptyset},\varpi(f_{p,0})]_{q^{-1}}=
  \frac{-1}{qc_p}\cdot L_{\emptyset, v_0\otimes e^{-\bar{\alpha}_p}e^{\bar{\Lambda}_p}}$.
As $\varpi(f_{p,0})=f_{p,0}$, we finally get
  $$u^{-1}\cdot [L^{p,\bar{c}}_{\emptyset,\emptyset},\varpi(f_{p,0})]_{q^{-1}}=
    -q^{-1}c^{-1}_pu^{-1}\cdot L^{p,\bar{c}}_{\emptyset, v_0\otimes e^{-\bar{\alpha}_p}e^{\bar{\Lambda}_p}}.$$

To evaluate $[L^{p,\bar{c}}_{\emptyset,\emptyset},\varpi(f_{p,-1})]_q$,
let us first compute $\varpi(f_{p,-1})$.
Due to~(\ref{T6'}), we have
  $$[h_{p,-1},f_{p,0}]=-(q+q^{-1})f_{p,-1}\Rightarrow \varpi(f_{p,-1})=(q+q^{-1})^{-1}\cdot [\varpi(f_{p,0}),\varpi(h_{p,-1})],$$
where $\varpi(f_{p,0})=f_{p,0}$ and
  $$\varpi(h_{p,-1})=(-1)^{p+1}d^p\cdot
    [e_{p,0},\cdots,[e_{1,0},[e_{p+1,0},\cdots,[e_{n-1,0},e_{0,0}]_{q^{-1}}\cdots ]_{q^{-1}}]_{q^{-1}}\cdots]_{q^{-2}}.$$
Applying iteratively the equality~(\ref{diamondsuit}) together with the relation~(\ref{T4}), we finally get
  $$\varpi(f_{p,-1})=(-1)^{p+1}d^p\cdot
    [e_{p-1,0},\cdots,[e_{1,0},[e_{p+1,0},\cdots,[e_{n-1,0},e_{0,0}]_{q^{-1}}\cdots ]_{q^{-1}}]_{q^{-1}}\cdots]_{q^{-1}}\cdot \psi^{-1}_{p,0}.$$
Therefore, it remains to evaluate
  $$E:=[L^{p,\bar{c}}_{\emptyset,\emptyset},
        [e_{p-1,0},\cdots,[e_{1,0},[e_{p+1,0},\cdots,[e_{n-1,0},e_{0,0}]_{q^{-1}}\cdots ]_{q^{-1}}]_{q^{-1}}\cdots]_{q^{-1}}]_q.$$
Applying iteratively the equality~(\ref{diamondsuit}) together with $[L^{p,\bar{c}}_{\emptyset,\emptyset},e_{j,0}]=0$ for $j\ne 0,p$, we get
  $$E=[e_{p-1,0},\cdots,[e_{1,0},[e_{p+1,0},\cdots,[e_{n-1,0},[L^{p,\bar{c}}_{\emptyset,\emptyset},e_{0,0}]_q]_{q^{-1}}\cdots ]_{q^{-1}}]_{q^{-1}}\cdots]_{q^{-1}}.$$
To compute this multicommutator, we apply the equality~(\ref{star}) with an appropriate choice of $v,w,x$ step-by-step.
Leaving details to the interested reader, let us present the final formula
  $$E=-c^{-1}_p\C\cdot
    L^{p,\bar{c}}_
    {\emptyset,v_0\otimes e^{\bar{\alpha}_{p-1}}\cdots e^{\bar{\alpha}_1}e^{\bar{\alpha}_{p+1}}\cdots e^{\bar{\alpha}_{n-1}}e^{\bar{\alpha}_0}e^{\bar{\Lambda}_p}}\cdot \psi_{p,0}.$$
Therefore,
  $$[L^{p,\bar{c}}_{\emptyset,\emptyset}, \varpi(f_{p,-1})]_q=
    (-1)^{\frac{n(n-1)}{2}}d^pc_p^{-1}\C\cdot L^{p,\bar{c}}_{\emptyset,v_0\otimes e^{-\bar{\alpha}_p}e^{\bar{\Lambda}_p}}.$$

The equality
 $[L^{p,\bar{c}}_{\emptyset,\emptyset},\varpi(f_{p,-1})]_q=u^{-1}\cdot [L^{p,\bar{c}}_{\emptyset,\emptyset},\varpi(f_{p,0})]_{q^{-1}}$
follows.
$\checkmark$

\medskip
\noindent
$\circ$
\emph{Proof of $[L^{p,\bar{c}}_{\emptyset,\emptyset},\varpi(\psi^{\pm}_p(z))]=0$.}

Define $\wt{\psi}_{p,N}\in {^{'}\ddot{U}}_{q,d}(\ssl_n)$ as the coefficient of $z^{-N}$ in $\psi^+_p(z)-\psi^-_p(z)$,
so that $[e_{p,a},f_{p,b}]=\frac{\wt{\psi}_{p,a+b}}{q-q^{-1}}$ for any $a,b\in \ZZ$.
Set $X_N:=[\varpi(\wt{\psi}_{p,N}),L^{p,\bar{c}}_{\emptyset,\emptyset}]$.
Combining the equalities
  $$(\varpi(e_{p,k+1})-u\varpi(e_{p,k}))L^{p,\bar{c}}_{\emptyset,\emptyset}=
    L^{p,\bar{c}}_{\emptyset,\emptyset}(q^{-1}\varpi(e_{p,k+1})-qu\varpi(e_{p,k})),$$
  $$(q^{-1}\varpi(f_{p,l+1})-qu\varpi(f_{p,l}))L^{p,\bar{c}}_{\emptyset,\emptyset}=
    L^{p,\bar{c}}_{\emptyset,\emptyset}(\varpi(f_{p,l+1})-u\varpi(f_{p,l})),$$
we get the following recursive relation:
$q^{-1}X_{k+l+2}-u(q+q^{-1})X_{k+l+1}+u^2qX_{k+l}=0.$

As $X_{-1}=X_0=0$, we get $X_k=0$ for any $k\in \ZZ$.
This proves $[L^{p,\bar{c}}_{\emptyset,\emptyset},\varpi(\psi^\pm_p(z))]=0$.
$\checkmark$

\medskip
(c) The unique element satisfying conditions (a,b) of Theorem~\ref{main5} and whose shuffle interpretation
has a form as in Theorem~\ref{Theorem 3.8 from FT2}
(we only need to know that it lives in an appropriate completion and its `purely Cartan part' equals $q^{-d_1}q^{\bar{\Lambda}_p}$)
is given by the right-hand side of~(\ref{sharp}).

% (iii) The unique element satisfying conditions (i, ii) and whose shuffle interpretation
% has a form as in Theorem~\ref{Theorem 3.8 from FT2} is given by the formula~(\ref{sharp}).

\medskip
\noindent
$\bullet$ \emph{Case $p=0$.}

Parts (a) and (c) are proved completely analogously to the case $p\ne 0$.
Since the last equality in (b) follows from the former two,
it suffices to check~(\ref{ii_1}) and~(\ref{ii_2}) for some $k\in \ZZ.$

\medskip
\noindent
$\circ$
\emph{Proof of $[L^{0,\bar{c}}_{\emptyset,\emptyset},\varpi(e_{0,0})]_q=q^2u\cdot [L^{0,\bar{c}}_{\emptyset,\emptyset},\varpi(e_{0,-1})]_{q^{-1}}$.}

According to Proposition~\ref{explicit formulas for Miki}(d), we have $\varpi(e_{0,-1})=(-d)^n e_{0,1}$.
Evaluating both sides of~(\ref{star}) at $v=|\emptyset\rangle,\ w=\langle \emptyset|,\ x=e_{0,1}$, we get
 $[L^{0,\bar{c}}_{\emptyset,\emptyset},e_{0,1}]_{q^{-1}}=(-1)^nc_0\cdot L^{0,\bar{c}}_{v_0\otimes e^{-\bar{\alpha}_0},\emptyset}.$
Therefore
  $$q^2u\cdot [L^{0,\bar{c}}_{\emptyset,\emptyset},\varpi(e_{0,-1})]_{q^{-1}}=uq^2d^nc_0\cdot
    L^{0,\bar{c}}_{v_0\otimes e^{-\bar{\alpha}_0},\emptyset}.$$
Next, we evaluate the left-hand side of the claimed equality. According to Proposition~\ref{explicit formulas for Miki}(a)
  $$\varpi(e_{0,0})=d (-q)^{n-2}\gamma \psi_{0,0}\cdot [f_{n-1,0},\cdots,[f_{2,0},f_{1,1}]_{q^{-1}}\cdots ]_{q^{-1}}.$$
Applying iteratively~(\ref{diamondsuit}) together with $[L^{0,\bar{c}}_{\emptyset,\emptyset},\psi_{0,0}]=0$ 
and $[L^{0,\bar{c}}_{\emptyset,\emptyset}, f_{j,0}]=0$ for $j\ne 0$, we get
  $$[L^{0,\bar{c}}_{\emptyset,\emptyset}, \varpi(e_{0,0})]_q=d(-q)^{n-2}\gamma\psi_{0,0}\cdot
    [f_{n-1,0},\cdots, [f_{2,0},[L^{0,\bar{c}}_{\emptyset,\emptyset},f_{1,1}]_q]_{q^{-1}}\cdots ]_{q^{-1}}.$$
Evaluating this multicommutator step-by-step as before, we finally get
  $$[L^{0,\bar{c}}_{\emptyset,\emptyset}, \varpi(e_{0,0})]_q=
    qdc_0\C^{-1}\cdot L^{0,\bar{c}}_{v_0\otimes e^{\bar{\alpha}_{n-1}}\cdots e^{\bar{\alpha}_1},\emptyset}=
    (-1)^{\frac{(n-2)(n-3)}{2}}qdc_0\C^{-1}\cdot L^{0,\bar{c}}_{v_0\otimes e^{-\bar{\alpha}_0},\emptyset}.$$

The equality
 $[L^{0,\bar{c}}_{\emptyset,\emptyset},\varpi(e_{0,0})]_q=q^2u\cdot [L^{0,\bar{c}}_{\emptyset,\emptyset},\varpi(e_{0,-1})]_{q^{-1}}$
follows.
$\checkmark$

\medskip
\noindent
$\circ$ \emph{Proof of $[L^{0,\bar{c}}_{\emptyset,\emptyset},\varpi(f_{0,0})]_q=u^{-1}[L^{0,\bar{c}}_{\emptyset,\emptyset},\varpi(f_{0,1})]_{q^{-1}}$.}

According to Proposition~\ref{explicit formulas for Miki}(d), we have $\varpi(f_{0,1})=(-d)^{-n} f_{0,-1}$.
Evaluating both sides of~(\ref{star}) at $v=|\emptyset\rangle,\ w=\langle \emptyset|,\ x=f_{0,-1}$, we get
 $[L^{0,\bar{c}}_{\emptyset,\emptyset},f_{0,-1}]_{q^{-1}}=\frac{(-1)^{n+1}}{qc_0}\cdot L^{0,\bar{c}}_{\emptyset,v_0\otimes e^{-\bar{\alpha}_0}}.$
Hence
  $$u^{-1}\cdot [L^{0,\bar{c}}_{\emptyset,\emptyset},\varpi(f_{0,1})]_{q^{-1}}=
    -q^{-1}d^{-n}c_0^{-1}u^{-1}\cdot L^{0,\bar{c}}_{\emptyset,v_0\otimes e^{-\bar{\alpha}_0}}.$$
Let us now evaluate the left-hand side of the claimed equality. According to Proposition~\ref{explicit formulas for Miki}(a)
  $$\varpi(f_{0,0})=d^{-1}\cdot [e_{n-1,0},\cdots,[e_{2,0},e_{1,-1}]_{q^{-1}}\cdots ]_{q^{-1}}\cdot \psi^{-1}_{0,0}\gamma^{-1}.$$
Applying iteratively~(\ref{diamondsuit}) together with $[L^{0,\bar{c}}_{\emptyset,\emptyset},\psi_{0,0}]=0$ 
and $[L^{0,\bar{c}}_{\emptyset,\emptyset}, e_{j,0}]=0$ for $j\ne 0$, we get
  $$[L^{0,\bar{c}}_{\emptyset,\emptyset}, \varpi(f_{0,0})]_q=d^{-1}\cdot
    [e_{n-1,0},\cdots [e_{2,0},[L^{0,\bar{c}}_{\emptyset,\emptyset},e_{1,-1}]_q]_{q^{-1}}\cdots ]_{q^{-1}}\cdot \psi^{-1}_{0,0}\gamma^{-1}.$$
Evaluating this multicommutator step-by-step as before, we finally get
  $$[L^{0,\bar{c}}_{\emptyset,\emptyset}, \varpi(f_{0,0})]_q=
    -d^{-1}c_0^{-1}\C\cdot L^{0,\bar{c}}_{\emptyset, v_0\otimes e^{\bar{\alpha}_{n-1}}\cdots e^{\bar{\alpha}_1}}=
    -(-1)^{\frac{(n-2)(n-3)}{2}}d^{-1}c_0^{-1}\C\cdot L^{0,\bar{c}}_{\emptyset, v_0\otimes e^{-\bar{\alpha}_0}}.$$

The equality
 $[L^{0,\bar{c}}_{\emptyset,\emptyset},\varpi(f_{0,0})]_q=u^{-1}[L^{0,\bar{c}}_{\emptyset,\emptyset},\varpi(f_{0,1})]_{q^{-1}}$
follows.
$\checkmark$

\medskip
\noindent
This completes our proof of Theorem~\ref{main5} for any $p\in [n]$.

         %%%%%%%%%%%%%%%%%%%%%%%%%%%% L operator realization of S_{1,p}(u) %%%%%%%%%%%%%%%%%%%%%%%%%%%

\subsection{Bimodule $\SSS(p,\bar{c})$}
$\ $

Let $^{'}\ddot{U}^{\geq,\wedge}$ (resp. $^{'}\ddot{U}^{+,\wedge}$) be the completion of $^{'}\ddot{U}^{\geq}$
(resp. $^{'}\ddot{U}^{+}$) with respect to the $\ZZ$--grading on $^{'}\ddot{U}^{\geq}$ (resp. $^{'}\ddot{U}^{+}$)
defined by assigning $\deg(e_{i,k})=-k, \deg(h_{i,k})=-k, \deg(q^{d_2})=0$.
Note that
 $L^{p,\bar{c}}_{\emptyset,\emptyset}\in \varpi(^{'}\ddot{U}^{\geq,\wedge})$,
due to Theorem~\ref{main5}.
Consider the $^{'}\ddot{U}^{+}$-bimodule $\SSS(p,\bar{c})$ defined as
  $$\SSS(p,\bar{c}):=
    \varpi({^{'}\ddot{U}^{+}})\cdot L^{p,\bar{c}}_{\emptyset,\emptyset}\cdot \varpi({^{'}\ddot{U}^{+}})
    \subset \varpi(^{'}\ddot{U}^{\geq,\wedge}),$$
where both $^{'}\ddot{U}^{+}$-actions are in conjunction with $\varpi$.
We conclude this section with the following result analogous to~\cite[Lemma 4.4]{FJMM2}.

\begin{prop}\label{L-opertaor realization}
There exists an isomorphism of $^{'}\ddot{U}^{+}$-bimodules
  $$\iota\colon S_{1,p}(u)\iso \SSS(p,\bar{c})\ \mathrm{with}\ \boldsymbol{1}\mapsto L^{p,\bar{c}}_{\emptyset,\emptyset},$$
where
$u=(-1)^{\frac{(n-2)(n-3)}{2}}q^{-1}d^{-p-(n-1)\delta_{p,0}}(c_0\cdots c_{n-1})^{-1}$ as before.
\end{prop}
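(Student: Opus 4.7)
The plan is to define $\iota$ as the unique ${^{'}\ddot{U}^{+}}$-bimodule homomorphism from $S_{1,p}(u)$ to $\SSS(p,\bar{c})$ sending $\mathbf{1}\mapsto L^{p,\bar{c}}_{\emptyset,\emptyset}$, and then to verify that this map exists, is surjective, and is injective. The first step is to show that $\mathbf{1}$ generates $S_{1,p}(u)$ as a bimodule. Modulo $S_{1,p}(u)\star S'$, Theorem~\ref{main1} identifies the quotient with the irreducible lowest weight module $\tau^p_{u,\bar{c}}$, which is generated from $|\emptyset\rangle$ under the left action of ${^{'}\ddot{U}^{+}}$. Since $S'$ has strictly positive shuffle multidegree, a descending induction on the multidegree upgrades this to $S_{1,p}(u)={^{'}\ddot{U}^{+}}\cdot\mathbf{1}\cdot {^{'}\ddot{U}^{+}}$; this simultaneously forces uniqueness of $\iota$ and yields surjectivity from the very definition of $\SSS(p,\bar{c})$.

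For the existence (well-definedness) of $\iota$, the only bimodule relations that must be respected are those obtained by comparing the shuffle formulas~(\ref{left}) and~(\ref{right}) at the generator $\mathbf{1}$. A direct evaluation with a one-variable input yields
\[
 \pi^p_{u,\bar{c}}(e_i(z))\cdot\mathbf{1} \;=\; c_i\,\phi(z/u)^{\delta_{i,p}}\cdot(\mathbf{1}\star e_i(z)),
\]
together with analogous identities for $f_i(z)$ and $\psi^\pm_i(z)$. Under $\iota$, these translate—after absorbing the scalars $c_i$ into the automorphism $\chi_{p,\bar{c}}$ of ${^{'}\ddot{U}^{+}}$ applied to one side of the bimodule action—into precisely the commutation relations between $\varpi(e_i(z)),\ \varpi(f_i(z)),\ \varpi(\psi^\pm_i(z))$ and $L^{p,\bar{c}}_{\emptyset,\emptyset}$ supplied by Theorem~\ref{main5}(i)--(ii). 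This guarantees that $\iota$ extends unambiguously from its value on $\mathbf{1}$.

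For injectivity, $\ker(\iota)$ is a graded sub-bimodule of $S_{1,p}(u)$ whose multidegree-$(0,\ldots,0)$ component is trivial (because $\iota(\mathbf{1})\ne 0$), so its image in $\bar{S}_{1,p}(u)\simeq \tau^p_{u,\bar{c}}$ is a proper submodule of an irreducible representation and therefore vanishes. This gives $\ker(\iota)\subseteq S_{1,p}(u)\star S'$; the argument is completed either by iterating this inclusion using $\ker(\iota)\star S\subseteq \ker(\iota)$ until boundedness-below of the shuffle multidegree terminates the process, or equivalently by a graded dimension comparison in each multidegree following the lines of~\cite[Lemma 4.4]{FJMM2}. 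The main obstacle is the bookkeeping in the well-definedness step: tracking how the $c_i$-scalars and the $\phi$-factors arising in the shuffle formulas at $\mathbf{1}$ align with the commutation identities for $L^{p,\bar{c}}_{\emptyset,\emptyset}$ from Theorem~\ref{main5}, and verifying that the twist $\chi_{p,\bar{c}}$ correctly absorbs the discrepancy on exactly one side of the bimodule structure.
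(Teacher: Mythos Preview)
Your argument for generation of $S_{1,p}(u)$ by $\mathbf{1}$ and for well-definedness of $\iota$ is essentially the paper's: both reduce to the commutation relations of Theorem~\ref{main5}(i)--(ii), which let one push left actions on $\mathbf{1}$ (resp.\ on $L^{p,\bar{c}}_{\emptyset,\emptyset}$) through to right actions by the \emph{same} modified generators $\tilde e_{i,k}$. The paper makes this step explicit by writing $e_{i,k}\cdot\mathbf{1}=\mathbf{1}\cdot\tilde e_{i,k}$ and then using that $\mathbf{1}\star(-)$ is injective, so that $\sum_j F_j\star\mathbf{1}\star G_j=0\Rightarrow\sum_j\tilde F_jG_j=0$ in (a completion of) $S$.

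Your injectivity argument, however, has a genuine gap. You claim that the image of $\ker(\iota)$ in $\bar S_{1,p}(u)\simeq\tau^p_{u,\bar c}$ is a proper submodule and hence zero by irreducibility. But $\tau^p_{u,\bar c}$ is irreducible only as an $^{'}\ddot U_{q,d}(\ssl_n)$-module, whereas $\iota$ is merely an $^{'}\ddot U^{+}$-bimodule map, so $\ker(\iota)$ is a priori only stable under the left $^{'}\ddot U^{+}$-action (the right action becomes trivial in the quotient). As an $^{'}\ddot U^{+}$-module, $\tau^p_{u,\bar c}$ is far from simple: it is $\ZZ_{\ge0}$-graded by $|\lambda|$ and the $e$-generators strictly raise this grading, so every positive-degree subspace generates a proper $^{'}\ddot U^{+}$-submodule. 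Your proposed iteration does not close the gap either: from $\ker(\iota)\subseteq S_{1,p}(u)\star S'$ you cannot conclude that the individual summands $G_j$ in $H=\sum G_j\star F_j$ lie in $\ker(\iota)$, so there is nothing to iterate on.

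The paper's remedy is short but uses an ingredient you did not invoke: the explicit exponential formula~(\ref{sharp}) from Theorem~\ref{main5}(iii) shows that $L^{p,\bar c}_{\emptyset,\emptyset}$ is \emph{invertible} in $\varpi({^{'}\ddot U^{\ge,\wedge}})$. Invertibility makes the implication chain used for well-definedness reversible: if $\sum_j\varpi(a_j)\,L^{p,\bar c}_{\emptyset,\emptyset}\,\varpi(b_j)=0$, rewrite it as $L^{p,\bar c}_{\emptyset,\emptyset}\cdot\varpi\bigl(\sum_j\tilde a_jb_j\bigr)=0$, cancel $L^{p,\bar c}_{\emptyset,\emptyset}$, and pull back through $\varpi$ and $\Psi$ to get $\sum_j F_j\star\mathbf{1}\star G_j=0$. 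This is what you should add to complete the proof.
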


\begin{proof}[Proof of Proposition~\ref{L-opertaor realization}]
$\ $

Any element $H\in S_{1,p}(u)$ can be written as $H=\sum_{l} F_l\star \textbf{1}\star G_l$ with $F_l,G_l\in S$,
due to Theorem~\ref{main1}.
Set $\iota(H):=\sum_l \varpi(a_l)\cdot L^{p,\bar{c}}_{\emptyset, \emptyset}\cdot \varpi(b_l)$,
where $a_l:=\Psi^{-1}(F_l), b_l:=\Psi^{-1}(G_l) \in {^{'}\ddot{U}^+}$.
We must show that $\iota$ is well-defined.
Applying Theorem~\ref{main5}(a,b), we find
  $$\varpi(e_{i,k})\cdot L^{p,\bar{c}}_{\emptyset,\emptyset}=L^{p,\bar{c}}_{\emptyset,\emptyset}\cdot \varpi(\wt{e}_{i,k}),\ \mathrm{where}\
    \wt{e}_{i,k}=
    \begin{cases}
    e_{i,k} & \text{if } i\ne p \\
    q^{-1}e_{i,k}+(q^{-1}-q)\sum_{r=1}^\infty u^r\cdot e_{i,k-r} & \text{if } i=p
    \end{cases}.$$
Let $\varrho$ be the automorphism of $^{'}\ddot{U}^{+,\wedge}$ such that $\varrho(e_{i,k})=\wt{e}_{i,k}$.
Extending $\Psi$ to an isomorphism of completions $\Psi\colon ^{'}\ddot{U}^{+,\wedge}\iso S^\wedge$, we use
$\wt{\varrho}$ to denote the induced automorphism of $S^\wedge$. Clearly $\wt{\varrho}(\Psi(X))=\Psi(\varrho(X))$
and $Y\star \textbf{1}=\textbf{1}\star \wt{\varrho}(Y)$ for any $X\in ^{'}\ddot{U}^{+,\wedge}, Y\in S^\wedge$.
Therefore
\begin{equation*}
  \sum_l F_l\star \textbf{1}\star G_l=0\Leftrightarrow \sum_l \wt{F}_l G_l=0\Leftrightarrow \sum_l \wt{a}_lb_l=0
  \Leftrightarrow \sum_l \varpi(a_l)\cdot L^{p,\bar{c}}_{\emptyset,\emptyset}\cdot \varpi(b_l)=0.
\end{equation*}
Thus, the linear map $\iota\colon S_{1,p}(u)\to \SSS(p,\bar{c})$ is well-defined and injective.
It is clear that $\iota$ is surjective and is an $S$-bimodule homomorphism. This completes the proof.
\end{proof}

 %%%%%%%%%%%%%%%%%%%%%%%%%%%%%%%%%%%%%%%%%%%%%%%%%%%%%%%%%%%%%%%%%%%%%%%%%%%%%%%%%%%%%%%%%%%%%%%%%%%%%%%%%%%%%%%%%%%%%%%%%%%%%%%
 %%%%%%%%%%%%%%%%%%%%%%%%%%%%%%%%%%%%%%%%%%%%%%%%%% BIBLIOGRAPHY %%%%%%%%%%%%%%%%%%%%%%%%%%%%%%%%%%%%%%%%%%%%%%%%%%%%%%%%%%%%%%%
 %%%%%%%%%%%%%%%%%%%%%%%%%%%%%%%%%%%%%%%%%%%%%%%%%%%%%%%%%%%%%%%%%%%%%%%%%%%%%%%%%%%%%%%%%%%%%%%%%%%%%%%%%%%%%%%%%%%%%%%%%%%%%%%

\end{document}